\def\Ad{{\hbox{\rm Ad}}}
\def\de{\delta}
\def\l{\lambda}
\def\setminus{\smallsetminus}
\def\gA{{\mathfrak A}}
\def\A{{\cal A}}
\def\B{{\cal B}}
\def\C{{\cal C}}
\def\I{{\cal I}}
\def\H{{\cal H}}
\def\S{{\cal S}}
\def\emptyset{\varnothing}
\def\Diff{{\rm Diff}}
\def\Sn{S^{1(n)}}
\def\S2{S^{1(2)}}
\newtheorem{theorem}{Theorem}[section]
\newtheorem{lemma}[theorem]{Lemma}
\newtheorem{corollary}[theorem]{Corollary}
\newtheorem{proposition}[theorem]{Proposition}
\theoremstyle{definition} \newtheorem{definition}[theorem]{Definition}
\theoremstyle{remark} \newtheorem{remark}[theorem]{Remark}
\def\emptyset{\varnothing}
\def\setminus{\smallsetminus}
\def\Diff{{\mathrm {Diff}}}
\def\RR{{\mathbb R}}
\def\CC{{\mathbb C}}
\def\NN{{\mathbb N}}
\def\ZZ{{\mathbb Z}}
\def\sl2{{{\rm SL}(2,\RR)}}
\def\psl2{{{\rm PSL}(2,\RR)}}
\def\u1{{{\rm U}(1)}}
\def\su2{{{\rm SU}(2)}}
\def\so3{{{\rm SO}(3)}}
\def\A{{\mathcal A}}
\def\B{{\mathcal B}}
\def\C{{\mathcal C}}
\def\H{{\mathcal H}}
\def\I{{\mathcal I}}
\def\T{{\mathcal T}}
\title{{\Huge Spectral Triples and \\ the Super-Virasoro Algebra}}
\author{
{\sc Sebastiano Carpi}\footnote{Supported in part by PRIN-MIUR, GNAMPA-INDAM and EU network ``Noncommutative Geometry" 
MRTN-CT-2006-0031962}\\
Dipartimento di Scienze,
Universit\`a di Chieti-Pescara ``G. d'Annunzio''\\
Viale Pindaro, 42, I-65127 Pescara, Italy\\
E-mail: {\tt carpi@sci.unich.it}\\
\phantom{X}\\
{\sc Robin Hillier}$^*$\footnote{Supported by the Gottlieb Daimler- und Karl Benz-Stiftung with a one year research scholarship}
\\
Dipartimento di Matematica,
Universit\`a di Roma ``Tor Vergata'',\\
Via della Ricerca Scientifica, 1, I-00133 Roma, Italy\\
E-mail: {\tt hillier@mat.uniroma2.it}\\
\phantom{X}\\
{\sc Yasuyuki Kawahigashi}\footnote{Supported in part by the
Grants-in-Aid for Scientific Research, JSPS.}\\
Department of Mathematical Sciences\\
University of Tokyo, Komaba, Tokyo, 153-8914, Japan\\
E-mail: {\tt yasuyuki@ms.u-tokyo.ac.jp}\\
\phantom{X}\\
{\sc Roberto Longo}$^*$
\\
Dipartimento di Matematica,
Universit\`a di Roma ``Tor Vergata'',\\
Via della Ricerca Scientifica, 1, I-00133 Roma, Italy\\
E-mail: {\tt longo@mat.uniroma2.it}}
\begin{document}

\date{May 15, 2009}
\maketitle

\begin{abstract}
We construct infinite dimensional spectral triples associated with 
representations of the super-Virasoro algebra. In particular the irreducible, 
unitary positive energy representation of the Ramond algebra with central 
charge $c$ and minimal lowest weight $h=c/24$ is graded and gives rise to 
a net of even $\theta$-summable spectral 
triples with non-zero Fredholm index. The irreducible unitary positive 
energy representations of the Neveu-Schwarz algebra give rise to nets of 
even $\theta$-summable generalised spectral triples where there is no Dirac 
operator but only a superderivation.
\end{abstract}

\bigskip
\bigskip

\thanks{\footnotesize{Supported in part by the ERC Advanced Grant 227458  
OACFT ``Operator Algebras and Conformal Field Theory"}}

\newpage

{\small\tableofcontents}

\section{Introduction}
\label{Intro}
In this paper we make a vital step in the ``noncommutative geometrization" program for Conformal Field Theory, that is in the search of noncommutative geometric invariants associated with conformal nets and their representations.

As we are here in the framework of quantum systems with infinitely many degrees of freedom, natural objects to look for are  
spectral triples in the sense of Connes and Kasparov, see \cite{C}. While there are 
important situations where these objects enter in Quantum Field Theory, 
see e.g. \cite{C, JLW}, the novelty of our work is that our spectral triple depends on 
the sector with respect to the vacuum representation, according to what was proposed by the QFT 
index theorem \cite{L4}.

Let us briefly explain the root of our work. A fundamental object in Connes' 
Noncommutative Geometry is a spectral triple, a noncommutative extension of 
the concept of elliptic pseudo-differential operator, say of the Dirac 
operator. Basically, a (graded) spectral triple 
$({\mathfrak A}, \H, Q)$ consists of a $\ZZ_2$-graded algebra ${\mathfrak A}$ acting on a 
$\ZZ_2$-graded Hilbert 
space $\H$ and an odd selfadjoint linear operator $Q$ on $\H$, with certain 
spectral summability properties and bounded graded commutator with elements 
of ${\mathfrak A}$. 
A spectral triple gives rise to a cyclic cocycle on $A$, the Chern character, 
that evaluates on $K_0$-theory elements of the even part ${\mathfrak A}_+$ 
of ${\mathfrak A}$.

In the present infinite-dimensional case, the right summability condition is 
the trace-class property of the heat kernel, Tr$(e^{-\beta Q^2}) <\infty$, 
$\beta>0$. The involved cohomology is entire cyclic cohomology \cite{C} and 
the corresponding Chern character is given by the Jaffe-Lesniewski-Osterwalder 
formula \cite{JLO} (see also \cite{Connes91,GS}).

Concerning Quantum Field Theory, one expects a natural occurrence of spectral triples in the 
supersymmetric frame. 
We recall a related QFT index theorem for certain massive models on the cylinder in the 
vacuum representation \cite{JLW}.

As explained in \cite{L4}, one may aim for a QFT index theorem, a noncommutative analog of 
the Atiyah-Singer index theorem for systems with infinitely many degrees of freedom, where a 
Doplicher-Haag-Roberts representation (superselection sector, see \cite{H}) represents the 
analog of an elliptic operator. While the operator algebraic and analytic structure behind 
the DHR theory is well understood, in particular by Jones theory of subfactors \cite{L5}, 
little is known about the possible noncommutative geometrical counterpart.

One would like to get a map
\[
\rho \longrightarrow \tau_\rho
\]
that associates a noncommutative geometric quantity $\tau_\rho$ to a sector $\rho$.

Now the operator algebraic approach to low-dimensional Conformal Quantum Field Theory (CFT) 
has shown to be very powerful as can be seen in particular by the classification of chiral 
CFTs with central charge $c<1$ \cite{KL1} and the construction of new models \cite{KL1,X6}. 
Therefore, CFT offers a natural framework for the noncommutative geometry set-up. 

Namely, we may want to look for a spectral triple associated with a sector in CFT. In order to have such a structure we may further want to restrict our attention to the supersymmetric case, namely to superconformal field theory (SCFT).

The present paper is a first step in this direction by constructing spectral triples 
associated with  (unitary, positive energy) representations of the super-Virasoro algebra 
\cite{FQS}. 

We now explain the actual content of this paper. In a recent paper by three of us 
\cite{CKL} we have set up the operator algebraic picture for SCFT. In particular, 
we have given an interpretation of Neveu-Schwarz and Ramond sectors as representations 
of a Fermi net on $S^1$ or of its promotion to the double cover of $S^1$, respectively. 

Starting with the super-Virasoro algebra, we have then defined the super-Virasoro net 
SVir$_c$ for a given admissible central charge value $c$, see \cite{FQS}. Then Neveu-Schwarz 
and Ramond representations SVir$_c$ correspond to representations of the Neveu-Schwarz 
algebra and of the Ramond algebra, respectively. As shown in \cite{CKL}, this is at least 
the case if $c< 3/2$.

In order to have the necessary tools to deal with super-derivations, we provide a quick technical summary in Section \ref{SectSuperDerivations}. Many statements are similar to the case of ungraded derivations, but specialised to our setting. In order to make clear what is meant by spectral triples and why we are interested in them, we state the classical definitions and their extensions to our setting in conformal field theory.

Our main results start in Section \ref{Sect:R} with the Ramond algebra. In this case graded representations are 
supersymmetric inasmuch as the odd element $G_0$ of the Ramond algebra is a square root of 
the shifted conformal Hamiltonian $L_0-c/24$. In the spirit of Algebraic Quantum Field 
Theory, starting from any such representation, we can define the net of von Neumann 
algebras generated by the corresponding quantum fields (the Bose and Fermi 
energy-momentum tensors). If $e^{-\beta(L_0- c/24)}$ is trace class for all 
$\beta >0$ we 
obtain a net of ($\theta$-summable) graded spectral triples by intersecting the 
local von Neumann algebras with the domain of the superderivation induced by $G_0$. 
However in principle such intersections may reduce to the multiples of the identity 
operator or in any case may be ``too small'' and this fact gives rise to a highly 
nontrivial technical problem.  

In this paper we show how to solve the above problem and in fact we prove that 
the algebra of smooth elements for the superderivation intersects every 
local von Neumann algebra in a weakly dense $^*$-subalgebra. A similar problem 
has been studied in the free supersymmetric case in \cite{BG}, where a crucial 
simplification occurred due to the Weyl commutation relations and the fact that 
the smeared free Fermi fields are bounded operators.

In particular starting from the irreducible unitary Ramond representation 
with  central charge $c$ and minimal lowest weight $h= c/24$, which is the 
unique irreducible graded unitary representation of the Ramond algebra with 
central charge $c$, we can define a nontrivial net of local even spectral 
triples.

For the Neveu-Schwarz 
algebra (in particular the vacuum sector is a representation of this algebra)
the structure is definitely less manifest because the odd 
elements $G_r$, the Fourier modes of the Fermi stress-energy tensor, 
are indexed by $r\in\mathbb Z + 1/2$, so none of them provides us with a 
supercharge operator, an odd square root of the conformal Hamiltonian. 
In fact no such Dirac type operator can exist in this case. It is however natural 
to expect that the spectral triples appearing in the Ramond case have 
a local manifestation also in the Neveu-Schwarz case.

We will indeed generalise the notion of spectral triple to the 
case where there is no supercharge operator but only a superderivation 
$\delta$ whose square $\delta^2$ is the derivation $[L_0,\,\cdot\,]$ 
implemented by the conformal Hamiltonian. The situation is here even 
different from the one treated in \cite{K} where a flow on the algebra with 
a super-KMS functional exists. 

Starting with an irreducible unitary positive energy representation of the 
Neveu-Schwarz algebra we shall construct a net of graded, generalised 
$\theta$-summable spectral triples associated with the corresponding 
Neveu-Schwarz net of von Neumann algebras. Here it is interesting to note 
that, while for the Ramond algebra we get a net of spectral triples on $S^1$, 
for the the Neveu-Schwarz algebra the net will live only on the double cover 
$S^{1(2)}$ because the local superderivations cannot be consistently 
defined on $S^1$.

For the Ramond, the JLO cocycles appear and can be investigated. 
Concerning the Neveu-Schwarz case it is unclear whether a corresponding 
cyclic cocycle can be directly defined, see Section \ref{Sect:NS}.

As we shortly mention in the outlook, we hope to continue our investigation in a 
subsequent paper where we plan to discuss related index and cohomological aspects.

\section{Preliminaries on superderivations}
\label{SectSuperDerivations}
Let $\H$ be a (complex) Hilbert space and let $\Gamma$ be a selfadjoint 
unitary operator on $\H$.
$\Gamma$ induces a $\ZZ_2$-grading $\gamma \equiv \Ad \Gamma$ on $B(\H)$. 
We shall denote $B(\H)_+$ the unital *-subalgebra of even (Bose) elements of 
$B(\H)$ and by $B(\H)_-$ the selfadjoint subspace of odd (Fermi) elements of $B(\H)$. 
Accordingly $B(\H)=B(\H)_+ \oplus B(\H)_-$. Moreover any $\gamma$-invariant 
subspace 
$L \subset B(\H)$ has a decomposition $L=L_+ \oplus L_-$, where 
$L_+ \equiv L\cap B(\H)_+$ and $L_- \equiv L\cap B(\H)_-$.

Now let $Q$ be a selfadjoint operator on $\H$  with domain $D(Q)$ and 
assume that $Q$ is odd, namely $\Gamma Q \Gamma = - Q$. 
We now define an operator ({\em superderivation}) $\delta$ on $B(\H)$ with 
domain $D(\delta)\subset B(\H)$ as follows.  

Let $D(\delta)$ be the set of operators $a \in B(\H)$ such 
that 
\begin{equation}\label{eq:def-D(delta)}
\gamma(a)Q \subset Qa -b,
\end{equation} 
for some bounded operator 
$b\in B(\H)$. Then $b$ is uniquely determined by $a$ and we set 
$\delta(a)=b$.    
Clearly $D(\delta)$ is a subspace of $B(\H)$ and the map 
$\delta :D(\delta) \mapsto B(\H)$ is linear.    
Hence we can define a norm $\|\cdot\|_1 $ on $D(\delta)$ by  
\begin{equation}
\label{norm1}
\|a \|_1 \equiv \| a\| + \| \delta(a) \|.
\end{equation}
Note also that $1 \in D(\delta)$ and $\delta(1)=0$.  

We shall now see that $D(\delta)$ is a $^*$-algebra and $\delta$ is a superderivation 
(i.e. a graded derivation). $D(\delta)$ will be called the 
\emph{domain of the superderivation $\delta = [Q,\,\cdot\,]$}. Here the brackets 
denote the super Lie-algebra brackets induced by $\Gamma$ on operators on $\H$ 
(graded commutator).  
\begin{proposition}\label{delta}
\label{delta_sProperties}
The operator $\delta$ satisfies the following properties:
\begin{itemize}
\item[$(i)$] If $a\in D(\delta)$ then $\gamma(a) \in D(\delta)$ and 
$\delta(\gamma(a))= 
-\gamma(\delta(a))$.
\item[$(ii)$] If $a\in D(\delta)$ then $a^* \in D(\delta)$ and 
$\delta(a^*)= 
\gamma(\delta(a)^*)$. 
\item[$(iii)$] If $a, b\in D(\delta)$ then $ab \in D(\delta)$ and 
$\delta(ab) =\delta(a)b+\gamma(a)\delta(b)$. 
\item[$(iv)$] $\delta$ is a weak--weak closed operator, namely 
if the net $a_\lambda \in D(\delta)$ converges to $a\in 
B(\H)$ in the weak topology and $\delta(a_\lambda)$ converges to 
$b \in B(\H)$ in the weak topology then $a \in D(\delta)$ and 
$\delta(a)=b$. 

\item[$(v)$] $D(\delta)$ is dense in $B(\H)$ in the strong topology.       

\item[$(vi)$] If $a, b \in D(\delta)$ then 
$\|\gamma(a)\|_1=\|a\|_1$, $\|a^*\|_1=\|a\|_1$ and 
$\|ab\|_1 \leq \|a\|_1\|b\|_1$, 
\end{itemize}

\end{proposition}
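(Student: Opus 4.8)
The plan is to work directly with the defining inclusion $\gamma(a)Q \subset Qa - \delta(a)$ and to manipulate it by the three natural operations: conjugation by $\Gamma$, passage to adjoints, and composition. Throughout I will use that $\Gamma$ is a selfadjoint unitary with $\Gamma Q\Gamma = -Q$, that $Q^* = Q$, that $\gamma = \Ad\Gamma$ is a $^*$-automorphism of $B(\H)$, and the elementary observation that $a \in D(\delta)$ forces $aD(Q) \subseteq D(Q)$ with $Qa\xi = \gamma(a)Q\xi + \delta(a)\xi$ for all $\xi \in D(Q)$. Parts $(i)$--$(iii)$ are then purely algebraic, part $(vi)$ is an immediate consequence of them, and parts $(iv)$--$(v)$ carry the analytic content.

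For $(i)$ I would conjugate $\gamma(a)Q \subset Qa - \delta(a)$ by $\Gamma$; since $\Gamma\gamma(a)\Gamma = a$, $\Gamma Q\Gamma = -Q$, and $\Gamma\delta(a)\Gamma = \gamma(\delta(a))$, this produces $aQ \subset Q\gamma(a) + \gamma(\delta(a))$, which says $\gamma(a) \in D(\delta)$ with $\delta(\gamma(a)) = -\gamma(\delta(a))$. For $(ii)$ I take adjoints in the defining inclusion, using $(BQ)^* = Q^*B^*$ for $B$ bounded and $Q$ densely defined together with $Q^* = Q$ and the fact that $Qa$ is closed (so $(Qa - \delta(a))^* = a^*Q - \delta(a)^*$); this gives $a^*Q \subset Q\gamma(a^*) + \delta(a)^*$, and conjugating by $\Gamma$ as in $(i)$ yields $a^* \in D(\delta)$ with $\delta(a^*) = \gamma(\delta(a)^*)$. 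For $(iii)$ I chain the inclusions: for $\xi \in D(Q)$, $\gamma(ab)Q\xi = \gamma(a)\gamma(b)Q\xi = \gamma(a)(Qb\xi - \delta(b)\xi)$, and since $b\xi \in D(Q)$ one continues $= \gamma(a)Q(b\xi) - \gamma(a)\delta(b)\xi = Q(ab)\xi - \delta(a)b\xi - \gamma(a)\delta(b)\xi$, with $ab\xi \in D(Q)$ throughout; hence $ab \in D(\delta)$ and $\delta(ab) = \delta(a)b + \gamma(a)\delta(b)$. Then $(vi)$ follows at once: $\|\gamma(a)\|_1 = \|a\|_1$ and $\|a^*\|_1 = \|a\|_1$ by $(i)$, $(ii)$ and the isometry of $\gamma$ and of $^*$, while $\|ab\|_1 \le \|a\|\,\|b\| + \|\delta(a)\|\,\|b\| + \|a\|\,\|\delta(b)\| \le \|a\|_1\|b\|_1$ by $(iii)$ and the triangle inequality.

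For $(iv)$ I fix $\xi, \eta \in D(Q)$ and use the scalar identity $\langle \gamma(a_\lambda)Q\xi, \eta\rangle = \langle a_\lambda\xi, Q\eta\rangle - \langle\delta(a_\lambda)\xi, \eta\rangle$, valid because $a_\lambda \in D(\delta)$ and $Q = Q^*$. Since $a_\lambda \to a$ weakly implies $\gamma(a_\lambda) = \Gamma a_\lambda\Gamma \to \gamma(a)$ weakly, and $\delta(a_\lambda) \to b$ weakly, passing to the limit gives $\langle \gamma(a)Q\xi, \eta\rangle = \langle a\xi, Q\eta\rangle - \langle b\xi, \eta\rangle$ for every $\eta \in D(Q)$. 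Thus $\eta \mapsto \langle a\xi, Q\eta\rangle$ is bounded, so $a\xi \in D(Q^*) = D(Q)$ and $Qa\xi = \gamma(a)Q\xi + b\xi$; as $\xi \in D(Q)$ was arbitrary this is exactly $\gamma(a)Q \subset Qa - b$, i.e. $a \in D(\delta)$ and $\delta(a) = b$.

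For $(v)$ let $P_n$ be the spectral projection of $Q$ for $[-n,n]$. Because this interval is symmetric and $\Gamma Q\Gamma = -Q$, each $P_n$ commutes with $\Gamma$ (hence is even), $QP_n$ is a bounded selfadjoint operator, and $P_nQ \subset QP_n$. For $a \in B(\H)$ put $a_n := P_naP_n$; since $\mathrm{Ran}(P_n) \subseteq D(Q)$ one has $a_nD(Q) \subseteq D(Q)$ and, for $\xi \in D(Q)$, $\gamma(a_n)Q\xi = P_n\gamma(a)QP_n\xi = Q a_n\xi - b_n\xi$ where $b_n := QP_naP_n - P_n\gamma(a)QP_n$ is bounded; hence $a_n \in D(\delta)$. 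Finally $P_n \to 1$ strongly gives $a_n \to a$ strongly, which proves density. The step demanding the most care — and the one I would single out as the main obstacle — is keeping all these operator inclusions honest on $D(Q)$, in particular in $(v)$, since that is where the unboundedness of $Q$ genuinely intervenes; the rest is formal once the domain bookkeeping is done correctly.
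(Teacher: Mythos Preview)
Your proof is correct, and parts $(i)$, $(iii)$, $(iv)$, $(vi)$ follow the paper's arguments essentially verbatim. In $(ii)$ you take operator adjoints of the inclusion directly, whereas the paper works with inner products $(a^*\psi_1,Q\psi_2)$ for $\psi_1,\psi_2\in D(Q)$; one small correction: the equality $(Qa)^*=a^*Q$ you invoke (``since $Qa$ is closed'') is false in general, but only the trivial inclusion $a^*Q\subset(Qa)^*$ is needed, and with that your chain $a^*Q-\delta(a)^*\subset(Qa-\delta(a))^*\subset(\gamma(a)Q)^*=Q\gamma(a^*)$ goes through unchanged.

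The genuine divergence is in $(v)$. The paper reduces to showing $D(\delta)'=\CC 1$ via the von Neumann density theorem: it identifies $D(\delta)_+$ with the even part of the domain of the generator of $t\mapsto\Ad e^{itQ}$ (citing \cite[Prop.~3.2.55]{BR1}), deduces $D(\delta)'\subset\{\Gamma\}''$, and then exhibits the odd element $Q(Q^2+1)^{-1}\in D(\delta)$ to rule out $\Gamma$. Your spectral-truncation argument $a\mapsto P_naP_n$ is more direct and self-contained: it avoids the commutant reduction, the one-parameter-group machinery, and the external reference, and it explicitly produces elements of $D(\delta)$ approximating a given $a$. The paper's route, on the other hand, gives the extra information $D(\delta)_+=D(\tilde\delta)_+$ and $\delta|_{D(\delta)_+}=-i\tilde\delta$, which is of independent interest even if not needed for density.
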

\begin{proof}
$(i)$ Since $\Gamma Q \Gamma =-Q$ then $\Gamma D(Q) = D(Q)$. Hence, 
if $a\in D(\delta)$ then $\gamma(a)D(Q) \subset D(Q)$ and a 
straightforward 
computation shows that, for every $\psi \in D(Q)$, 
$Q\gamma(a)\psi -aQ\psi = -\Gamma\delta(a)\Gamma \psi$. Hence,  
$\gamma(a) \in D(\delta)$ and $\delta(\gamma(a))= 
-\gamma(\delta(a))$.

\noindent $(ii)$ Let $a\in D(\delta)$ and $\psi_1, \psi_2 \in D(Q)$. 
Then, 
\begin{eqnarray*}
(a^*\psi_1, Q\psi_2) & = & (\psi_1,aQ\psi_2) = 
-(\psi_1,\delta(\gamma(a))\psi_2) + (\psi_1,Q\gamma(a)\psi_2) \\
& = & -(\delta(\gamma(a))^*\psi_1,\psi_2) +(\gamma(a^*)Q\psi_1,\psi_2).
\end{eqnarray*}
It follows that, $a^*\psi_1 \in D(Q)$ and 
$Qa^*\psi_1=\gamma(a^*)Q\psi_1 -\delta(\gamma(a))^*\psi_1.$ 
Hence, since $\psi_1 \in D(Q)$ was arbitrary, $a^* \in D(\delta)$ and 
$\delta(a^*) = -\delta(\gamma(a))^*= \gamma(\delta(a)^*)$.
\medskip

\noindent $(iii)$ Let $a,b \in D(\delta)$ and $\psi \in D(Q)$. 
Then $ab\psi, b\psi \in D(Q)$
and 
\begin{eqnarray*}
Qab\psi &=& Qab\psi - \gamma(a)Qb\psi+\gamma(a)Qb\psi- 
\gamma(a)\gamma(b)Q\psi +\gamma(ab)Q\psi \\
&=&\delta(a)b\psi+\gamma(a)\delta(b)\psi +\gamma(ab)Q\psi. 
\end{eqnarray*}
Hence $ab \in D(\delta)$ and 
$\delta(ab)=\delta(a)b+\gamma(a)\delta(b)$. 
\medskip 

\noindent $(iv)$ Let $a_\lambda \in D(\delta)$ be a net and let 
$a,b \in B(\H)$ be bounded operators such that $\lim a_\lambda= a$ and 
$\lim \delta(a_\lambda) =b$ in the weak topology of $B(\H)$ and let 
$\psi_1, \psi_2 \in D(Q)$. Then 
\begin{eqnarray*}
(a\psi_1, Q\psi_2) & = & \lim (\psi_1,a_\lambda^*Q\psi_2) =
-\lim (\psi_1,\delta(\gamma(a_\lambda^*))\psi_2) + 
\lim (\psi_1,Q\gamma(a_\lambda^*)\psi_2) \\
&=& \lim (\psi_1,\delta(a_\lambda)^*\psi_2) +
\lim (\gamma(a_\lambda)Q\psi_1,\psi_2) \\
& = & (b\psi_1,\psi_2) +(\gamma(a)Q\psi_1,\psi_2).
\end{eqnarray*}
Hence, $a\psi_1\in D(Q)$ and $Qa\psi_1=\gamma(a)Q\psi_1 +b\psi_1$ and 
since $\psi_1\in D(Q)$ was arbitrary $a\in  D(\delta)$ and 
$\delta(a)=b$.
\medskip

\noindent $(v)$ Since we have shown that  $D(\delta)$ is a unital 
*-subalgebra of $B(\H)$, by von Neumann density theorem it is enough to 
show that the commutant $ D(\delta)'$ contains only the scalar multiples of 
the identity operator. Let $t\mapsto \alpha_t$ be the 
($\sigma$-weakly) continuous one-parameter group of automorphisms of 
$B(\H)$ defined by $\alpha_t(a)=e^{itQ}ae^{-itQ}$, $a\in B(\H)$ and let 
$\tilde{\delta}$ be the corresponding generator with domain $D(\tilde{\delta})$, 
see e.g. \cite{BR1}. It is well known that $D(\tilde{\delta})$ is a strongly dense 
unital *-subalgebra of $B(\H)$ \cite{BR1}. Moreover, from the equality 
$\gamma(\alpha_t(a))=\alpha_{-t}(\gamma(a))$ it follows that $D(\tilde{\delta})$ 
is $\gamma$-invariant. Thus $D(\tilde{\delta})_+$ is strongly dense in 
$B(\H)_+=\{\Gamma\}'$ and consequently $(D(\tilde{\delta})_+)'=\{\Gamma\}''$. 
Now, it follows from \cite[Proposition 3.2.55]{BR1} that 
$D(\delta)_+=D(\tilde{\delta})_+$ and that $\delta(a)=-i\tilde{\delta}(a)$ for 
any $a\in D(\delta)_+$. Hence, $D(\delta)' \subset \{\Gamma\}''$. 
Now, if $Q=0$, $D(\delta)=B(\H)$ and there is nothing to prove. If 
$Q\neq 0$ then $Q(Q^2+1)^{-1}$ is a nonzero odd element in 
$D(\delta)$ and hence $\Gamma \notin D(\delta)'$ so that 
$D(\delta)'= \CC 1$.
\medskip

\noindent $(vi)$ The two equalities from the norm follows directly from 
$(i)$ and $(ii)$. Now let $a,b \in D(\delta)$ then, by $(iii)$ we have 
$ab\in D(\delta)$ and $\delta(ab)=\delta(a)b+\gamma(a)\delta(b)$. 
Accordingly 
$$\|ab\|_1 \leq \|a\|\|b\| + \|\delta(a)\|\|b\|+\|a \| \|\delta(b)\| 
\leq \|a\|_1\|b\|_1.$$
\end{proof}
\begin{remark} It follows from $(iv)$ of the above proposition that $\delta$ 
is $\T$ -- $\T$ closed if $\T$ is the strong, $\sigma$-weak or 
$\sigma$-strong 
topology.  Indeed, as any of such a topology $\T$ is stronger than the weak 
topology, the graph of $\delta$ is closed in the $\T$-topology of 
$B(\H)\oplus\B(\H)$ too.
\end{remark}
\begin{corollary} $D(\delta)$ with the norm $\|\cdot\|_1$ is a 
unital Banach *-algebra. 
\end{corollary}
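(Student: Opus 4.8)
The plan is to verify the three defining properties of a unital Banach $^*$-algebra: that $(D(\delta),\|\cdot\|_1)$ is complete, that multiplication is continuous with respect to $\|\cdot\|_1$, and that the involution is isometric.

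First I would address completeness. Let $(a_n)$ be a $\|\cdot\|_1$-Cauchy sequence in $D(\delta)$. Then both $(a_n)$ and $(\delta(a_n))$ are Cauchy in the operator norm of $B(\H)$, hence converge in norm to elements $a,b\in B(\H)$ respectively. Since norm convergence implies weak convergence, part $(iv)$ of Proposition \ref{delta} applies and gives $a\in D(\delta)$ with $\delta(a)=b$. It is then immediate that $\|a_n-a\|_1=\|a_n-a\|+\|\delta(a_n)-b\|\to 0$, so $D(\delta)$ is $\|\cdot\|_1$-complete, i.e. a Banach space.

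Next, the Banach-algebra and $^*$-algebra structure is already essentially contained in Proposition \ref{delta}: parts $(ii)$ and $(iii)$ show $D(\delta)$ is closed under the involution and the product of $B(\H)$, part $(vi)$ gives the submultiplicativity $\|ab\|_1\le\|a\|_1\|b\|_1$ and the isometry $\|a^*\|_1=\|a\|_1$ of the involution, and the remark after the definition notes $1\in D(\delta)$ with $\delta(1)=0$, so $\|1\|_1=1$ and $D(\delta)$ is unital. Associativity, distributivity and the $^*$-algebra axioms are inherited from $B(\H)$. Combining this with the completeness established above yields the corollary.

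I do not expect any serious obstacle here; the corollary is a direct packaging of Proposition \ref{delta}. The only point requiring a genuine (if short) argument is completeness, and there the key is to recognize that $\|\cdot\|_1$-convergence forces norm-convergence of both $a_n$ and $\delta(a_n)$, at which point the weak closedness in $(iv)$ — which is stronger than what is needed — closes the graph and identifies the limit. (One could alternatively invoke the remark that $\delta$ is strong–strong closed, but norm closedness is all that is used, and it follows a fortiori.)
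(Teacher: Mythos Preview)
Your proof is correct and matches the paper's approach: the paper states the corollary without proof, as a direct consequence of Proposition~\ref{delta}, and your argument is precisely the natural fill-in, with completeness via part $(iv)$ being the only point needing an explicit line.
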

The following lemma will be useful later. 
\begin{lemma}
\label{corelemma}
Let $D \subset D(Q)$ be a core for $Q$ let $a \in B(\H)$ and assume that 
$aD \subset D(Q)$ and the map
$D \ni \psi \mapsto Qa\psi -\gamma(a)Q\psi$
extends to a bounded linear operator $b \in B(\H)$. 
Then $a\in D(\delta)$ and $\delta(a)=b$. 
\end{lemma}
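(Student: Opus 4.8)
The plan is to reduce the statement to the defining condition \eqref{eq:def-D(delta)} by showing that the inclusion $\gamma(a)Q \subset Qa - b$, which a priori must be checked on all of $D(Q)$, already follows from its validity on the core $D$. First I would fix $\psi \in D(Q)$ and choose, by the core property, a sequence (or net) $\psi_n \in D$ with $\psi_n \to \psi$ and $Q\psi_n \to Q\psi$ in $\H$. Since $a$ and $\gamma(a)$ are bounded, we immediately get $a\psi_n \to a\psi$ and $\gamma(a)Q\psi_n \to \gamma(a)Q\psi$. By hypothesis, $Qa\psi_n - \gamma(a)Q\psi_n = b\psi_n \to b\psi$, so $Qa\psi_n = b\psi_n + \gamma(a)Q\psi_n$ converges to $b\psi + \gamma(a)Q\psi$.

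The key step is then to invoke the closedness of $Q$: since $Q$ is selfadjoint it is in particular a closed operator, and we have $a\psi_n \to a\psi$ together with $Q(a\psi_n) \to b\psi + \gamma(a)Q\psi$. Closedness of $Q$ yields $a\psi \in D(Q)$ and $Qa\psi = b\psi + \gamma(a)Q\psi$. Since $\psi \in D(Q)$ was arbitrary, this shows $aD(Q) \subset D(Q)$ and $Qa\psi = b\psi + \gamma(a)Q\psi$ for all $\psi \in D(Q)$, i.e. $\gamma(a)Q \subset Qa - b$ as operators on $D(Q)$. By the very definition of $D(\delta)$ in \eqref{eq:def-D(delta)} and the fact (noted there) that the bounded operator $b$ realizing this inclusion is uniquely determined, we conclude $a \in D(\delta)$ and $\delta(a) = b$.

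I do not expect any serious obstacle here: the only point requiring a little care is that the map $\psi \mapsto Qa\psi - \gamma(a)Q\psi$ on $D$ is assumed to \emph{extend} to a bounded operator $b$, so one must make sure that the value of this extension at a general $\psi \in D(Q)$ really is $b\psi$ — but this is automatic from the continuity argument above, since the approximating values $Qa\psi_n - \gamma(a)Q\psi_n = b\psi_n$ converge to $b\psi$ by boundedness of $b$. One could equivalently phrase the whole argument in terms of the graph of $Q$ being closed in $\H \oplus \H$ and the pair $(a\psi_n, Qa\psi_n)$ converging in this graph norm. No grading subtleties enter beyond the boundedness of $\gamma(a)$, which holds since $\gamma = \Ad\Gamma$ with $\Gamma$ unitary.
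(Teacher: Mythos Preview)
Your proof is correct and follows essentially the same approach as the paper: approximate an arbitrary $\psi \in D(Q)$ by a sequence from the core $D$, use boundedness of $a$, $\gamma(a)$ and $b$ to get convergence of $a\psi_n$ and $Qa\psi_n$, and then invoke closedness of the selfadjoint operator $Q$ to conclude $a\psi \in D(Q)$ with $Qa\psi = \gamma(a)Q\psi + b\psi$. The paper's argument is identical in structure, only slightly terser.
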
 
\begin{proof} Let $\psi \in D(Q)$. By assumption $D$ is a core for $Q$ and 
thus there is a sequence $\psi_n \in D$ such that $\lim \psi_n= \psi$ 
and $\lim Q\psi_n=Q\psi$. Hence, $\lim a\psi_n= a\psi$ and 
$\lim Qa\psi_n =\gamma(a)Q\psi +b\psi$ and since $Q$, being selfadjoint,
is a closed operator, $a\psi \in D(Q)$ and $Qa\psi = 
\gamma(a)Q\psi+b\psi$. Since $\psi \in D(Q)$ was arbitrary we have proved 
that $\gamma(a)Q \subset Qa -b$ and the conclusion follows. 
\end{proof}
We now consider the domains $D(\delta^n)$, $n \in \NN$, of the powers of 
$\delta$. 
Note that $D(\delta)=D(\delta^1)\supset D(\delta^2) \supset 
D(\delta^3) \dots$ and that $C^\infty(\delta) \equiv 
\cap_{n=1}^\infty D(\delta^n)$ is $\delta$-invariant. 

We define a norm $\|\cdot \|_n$ on $D(\delta^n)$, $n\in \NN$ by \eqref{norm1} and the
recursive relation 
\begin{equation}
\label{normn}
\|a\|_{n+1}= \|a\|_n + \|\delta(a)\|_n.
\end{equation}      

\begin{proposition}
\label{proposition*subalgebras}
The subspaces $D(\delta^n)$, $n \in \NN$ and $C^\infty(\delta)$ are
$\gamma$-invariant unital *-subalgebras of $B(\H)$. Moreover the pair 
$\big(D(\delta^n),\|\cdot \|_n\big)$ is a Banach *-algebra
for all $n\in \NN$ and $ \|\gamma(a)\|_n  = \|a\|_n$ for 
all $a \in D(\delta^n)$. 
\end{proposition}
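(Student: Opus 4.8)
The plan is to bootstrap everything from Proposition~\ref{delta_sProperties}, which already establishes that $\big(D(\delta),\|\cdot\|_1\big)$ is a $\gamma$-invariant unital Banach $*$-algebra with $\|\gamma(a)\|_1=\|a\|_1$, and then to run an induction on $n$. So first I would set up the inductive hypothesis: assume $\big(D(\delta^n),\|\cdot\|_n\big)$ is a $\gamma$-invariant unital Banach $*$-algebra and $\|\gamma(a)\|_n=\|a\|_n$ for all $a\in D(\delta^n)$. The base case $n=1$ is exactly the Corollary already recorded. For the inductive step, the key observation is that $D(\delta^{n+1})=\{a\in D(\delta):\delta(a)\in D(\delta^n)\}$, and the norm is $\|a\|_{n+1}=\|a\|_n+\|\delta(a)\|_n$.

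Next I would verify the $*$-algebra and $\gamma$-invariance properties. For $\gamma$-invariance: if $a\in D(\delta^{n+1})$ then $a\in D(\delta)$ so $\gamma(a)\in D(\delta)$ with $\delta(\gamma(a))=-\gamma(\delta(a))$ by part $(i)$; since $\delta(a)\in D(\delta^n)$ and $D(\delta^n)$ is $\gamma$-invariant by hypothesis, $\delta(\gamma(a))=-\gamma(\delta(a))\in D(\delta^n)$, hence $\gamma(a)\in D(\delta^{n+1})$, and moreover $\|\gamma(a)\|_{n+1}=\|\gamma(a)\|_n+\|\delta(\gamma(a))\|_n=\|a\|_n+\|\gamma(\delta(a))\|_n=\|a\|_n+\|\delta(a)\|_n=\|a\|_{n+1}$. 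The $*$-closure is analogous using part $(ii)$: $\delta(a^*)=\gamma(\delta(a)^*)$, and since $D(\delta^n)$ is a $\gamma$-invariant $*$-algebra, $\delta(a)\in D(\delta^n)\Rightarrow\delta(a)^*\in D(\delta^n)\Rightarrow\gamma(\delta(a)^*)\in D(\delta^n)$, so $a^*\in D(\delta^{n+1})$. For products: if $a,b\in D(\delta^{n+1})$ then $ab\in D(\delta)$ with $\delta(ab)=\delta(a)b+\gamma(a)\delta(b)$ by part $(iii)$; now $\delta(a),\gamma(a),b,\delta(b)$ all lie in $D(\delta^n)$ (using the inductive $\gamma$-invariance and the fact that $a,b\in D(\delta^{n+1})\subset D(\delta^n)$, which needs the nesting $D(\delta^{n+1})\subset D(\delta^n)$ — itself immediate since $\delta(a)\in D(\delta^n)\subset D(\delta^{n-1})\subset\cdots$), so by the inductive algebra property $\delta(ab)\in D(\delta^n)$, giving $ab\in D(\delta^{n+1})$. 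The submultiplicativity $\|ab\|_{n+1}\le\|a\|_{n+1}\|b\|_{n+1}$ follows from expanding $\|ab\|_{n+1}=\|ab\|_n+\|\delta(ab)\|_n\le\|a\|_n\|b\|_n+\|\delta(a)\|_n\|b\|_n+\|\gamma(a)\|_n\|\delta(b)\|_n=\|a\|_n\|b\|_n+\|\delta(a)\|_n\|b\|_n+\|a\|_n\|\delta(b)\|_n\le\|a\|_{n+1}\|b\|_{n+1}$, using the inductive submultiplicativity and $\gamma$-isometry.

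Then I would address completeness of $\big(D(\delta^{n+1}),\|\cdot\|_{n+1}\big)$. Given a $\|\cdot\|_{n+1}$-Cauchy sequence $a_k$, both $a_k$ and $\delta(a_k)$ are $\|\cdot\|_n$-Cauchy, hence converge in $\|\cdot\|_n$ (by the inductive completeness) to some $a,b\in D(\delta^n)$. Since $\|\cdot\|_n$-convergence implies norm convergence in $B(\H)$, a fortiori weak convergence, part $(iv)$ (weak-weak closedness of $\delta$) gives $a\in D(\delta)$ and $\delta(a)=b\in D(\delta^n)$, so $a\in D(\delta^{n+1})$ and $a_k\to a$ in $\|\cdot\|_{n+1}$. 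Finally, the statement about $C^\infty(\delta)=\cap_n D(\delta^n)$: it is clearly a $\gamma$-invariant unital $*$-subalgebra as an intersection of such, which follows formally once the finite-$n$ case is done; it carries the Fr\'echet topology from the increasing family $\|\cdot\|_n$, though the proposition as stated only claims it is a $*$-subalgebra, so nothing further is needed there.

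I expect no genuine obstacle: the whole argument is a clean induction, and every nontrivial input (the graded Leibniz rule, the $*$- and $\gamma$-behaviour of $\delta$, and weak-weak closedness) has already been supplied by Proposition~\ref{delta_sProperties}. The only point demanding a little care is bookkeeping the nesting $D(\delta^{n+1})\subset D(\delta^n)$ and making sure that when one writes $\delta(ab)=\delta(a)b+\gamma(a)\delta(b)$, all three summands are actually certified to be in $D(\delta^n)$ — which is exactly where one uses that $\gamma$ preserves $D(\delta^n)$ and that $D(\delta^n)$ is closed under products. The completeness step is the one place where one must invoke a closedness property of $\delta$ rather than pure algebra, and part $(iv)$ is tailor-made for it.
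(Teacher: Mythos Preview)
Your proof is correct and follows essentially the same route as the paper's: an induction on $n$ using Proposition~\ref{delta_sProperties} $(i)$--$(iii)$ for the $*$-algebra and $\gamma$-invariance properties, the same expansion for submultiplicativity, and the weak--weak closedness $(iv)$ for completeness. The paper's write-up is slightly terser (it does not spell out the submultiplicativity and $\gamma$-isometry computations at level $n+1$, referring back to the $n=1$ argument), but the content is the same.
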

\begin{proof} 
Clearly it is enough to prove the proposition for the subspaces 
$D(\delta^n)$, $n \in \NN$. We proceed by induction.

From Proposition \ref{delta_sProperties} $D(\delta^1)=D(\delta)$, 
is a $\gamma$-invariant unital *-subalgebra of $B(\H)$. Moreover, 
it is a Banach *-algebra with the norm $\|\cdot \|_1$. 
Assume now that the same is true for the pair 
$\big( D(\delta^n), \|\cdot \|_n\big)$. Since $1 
\in D(\delta)$ and 
$\delta(1)=0$ we can conclude that $1\in D(\delta^{n+1})$. Now let 
$a,b \in D(\delta^{n+1})$. Then, $\gamma(a), a^*, ab \in D(\delta)$ 
and $\delta(a), \delta(b) \in D(\delta^n)$. Moreover 
$\delta(\gamma(a)) = -\gamma(\delta(a)) \in D(\delta^n)$, 
$\delta(a^*) = \gamma(\delta(a)^*) \in D(\delta^n)$ and 
$\delta(ab) = \delta(a)b+\gamma(a)\delta(b) \in D(\delta^n)$. 
Hence, $D(\delta^{n+1})$ is a unital $\gamma$-invariant *-subalgebra 
of $B(\H)$. That the norm $\|\cdot \|_{n+1}$ is a *-algebra norm 
$D(\delta^{n+1})$ and that it is $\gamma$-invariant follows exactly as 
in the proof of Proposition \ref{delta_sProperties} $(vi)$ and it remains 
to show that $D(\delta^{n+1})$ is complete. Let $a_m$, $m\in \NN$ be a 
Cauchy sequence in $D(\delta^{n+1})$. By the inductive assumption 
$a_m$ and $\delta(a_m)$ converge to elements $a, b \in 
D(\delta^{n})$ respectively and it follows from 
Proposition \ref{delta_sProperties} $(iv)$ that $b=\delta(a)$ and hence 
$a \in D(\delta^{n+1})$ and $\|a_m - a\|_{n+1}$ tends to $0$ as $m$ 
tends to $\infty$.  
\end{proof} 

For every $a \in B(\H)$ we denote by $\sigma(a)$ the spectrum of $a$. 
The following proposition can be proved adapting the proof of 
\cite[Proposition 3.2.29]{BR1}. 
\begin{proposition}
If $a \in D(\delta)$ and $\lambda \notin \sigma(a)$ then 
$(a-\lambda 1)^{-1} \in D(\delta)$ and 
\begin{equation}
\delta\big( (a-\lambda 1)^{-1}\big)=
-(\gamma(a)-\lambda 1)^{-1} \delta(a)(a-\lambda 1)^{-1}.
\end{equation}
\end{proposition}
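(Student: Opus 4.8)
The plan is to separate the statement into the membership $(a-\lambda 1)^{-1}\in D(\delta)$ and the explicit formula for $\delta\big((a-\lambda 1)^{-1}\big)$, the latter being an easy consequence of the former. Indeed, writing $c=(a-\lambda 1)^{-1}$, once $c\in D(\delta)$ we have $c,\ a-\lambda 1\in D(\delta)$ and $c(a-\lambda 1)=1$, so Proposition~\ref{delta_sProperties}$(iii)$ together with $\delta(1)=0$ gives $\delta(c)(a-\lambda 1)+\gamma(c)\delta(a-\lambda 1)=0$; since $\delta(a-\lambda 1)=\delta(a)$ and $\gamma$ is a $*$-automorphism with $\gamma(c)=(\gamma(a)-\lambda 1)^{-1}$, multiplying on the right by $c$ yields exactly $\delta(c)=-(\gamma(a)-\lambda 1)^{-1}\delta(a)(a-\lambda 1)^{-1}$. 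So everything reduces to showing $c\in D(\delta)$. A direct attack through Lemma~\ref{corelemma} — rewriting the defining relation $\gamma(a)Q\subset Qa-\delta(a)$ and left-multiplying by $\gamma(c)$ — does produce the required identity on the subspace $(a-\lambda 1)D(Q)$, but one then needs that subspace to be a core for $Q$, which turns out to be equivalent to $cD(Q)\subset D(Q)$ itself and hence circular. I would therefore work entirely inside the Banach $*$-algebra $\big(D(\delta),\|\cdot\|_1\big)$, following the pattern of \cite[Proposition 3.2.29]{BR1}.

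First I would treat large $|\lambda|$: for $|\lambda|>\|a\|_1$ the Neumann series $-\lambda^{-1}\sum_{n\ge 0}\lambda^{-n}a^{n}$ converges in $\|\cdot\|_1$ (because $\|a^{n}\|_1\le\|a\|_1^{\,n}$ in the Banach algebra $D(\delta)$) and equals $(a-\lambda 1)^{-1}$, so $(a-\lambda 1)^{-1}\in D(\delta)$ there. Set $U=\{\lambda\notin\sigma(a):(a-\lambda 1)^{-1}\in D(\delta)\}\ne\emptyset$. The resolvent expansion $(a-\lambda 1)^{-1}=(a-\lambda_0 1)^{-1}\sum_{n\ge 0}\big((\lambda-\lambda_0)(a-\lambda_0 1)^{-1}\big)^{n}$ converges in $\|\cdot\|_1$ for $|\lambda-\lambda_0|<\|(a-\lambda_0 1)^{-1}\|_1^{-1}$, so $U$ is open. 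For closedness of $U$ in $\CC\setminus\sigma(a)$: by the computation of the first paragraph every $\lambda\in U$ satisfies $\delta\big((a-\lambda 1)^{-1}\big)=-(\gamma(a)-\lambda 1)^{-1}\delta(a)(a-\lambda 1)^{-1}$, and if $\lambda_k\in U$, $\lambda_k\to\lambda_\infty\notin\sigma(a)$, then $(a-\lambda_k 1)^{-1}\to(a-\lambda_\infty 1)^{-1}$ and $\delta\big((a-\lambda_k 1)^{-1}\big)\to-(\gamma(a)-\lambda_\infty 1)^{-1}\delta(a)(a-\lambda_\infty 1)^{-1}$ in operator norm, hence weakly; by weak--weak closedness of $\delta$ (Proposition~\ref{delta_sProperties}$(iv)$), $(a-\lambda_\infty 1)^{-1}\in D(\delta)$, i.e. $\lambda_\infty\in U$.

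The step where real work is needed is that $\CC\setminus\sigma(a)$ need not be connected, so the argument above only shows that $U$ contains the unbounded component of the resolvent set. To reach an arbitrary $\lambda\notin\sigma(a)$ I would pass to a positive element: put $b=a-\lambda 1$, so $b$ is invertible in $B(\H)$ and $b\in D(\delta)$, and let $p=b^{*}b$. By Proposition~\ref{delta_sProperties}$(ii),(iii)$ we have $p\in D(\delta)$, and $p$ is positive and invertible, so $\sigma(p)\subset[\varepsilon,\|p\|]$ for some $\varepsilon>0$ and $\CC\setminus\sigma(p)$ is connected. The open/closed argument above, applied with $p$ in place of $a$, then gives $(p-z1)^{-1}\in D(\delta)$ for every $z\notin\sigma(p)$, in particular $p^{-1}\in D(\delta)$; therefore $(a-\lambda 1)^{-1}=b^{-1}=p^{-1}b^{*}\in D(\delta)$ by Proposition~\ref{delta_sProperties}$(iii)$ (using also $a^{*}\in D(\delta)$), and the formula for $\delta\big((a-\lambda 1)^{-1}\big)$ follows as in the first paragraph. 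Thus the only genuine obstacle is the inverse-closedness (spectral invariance) of $D(\delta)$ in $B(\H)$ across the possible ``holes'' of $\sigma(a)$, which is precisely what the reduction to $p=b^{*}b$ circumvents; the Leibniz computation of $\delta\big((a-\lambda 1)^{-1}\big)$, the Neumann and resolvent series, and the closedness of $U$ are all routine given Proposition~\ref{delta_sProperties} and the Banach $*$-algebra structure of $\big(D(\delta),\|\cdot\|_1\big)$.
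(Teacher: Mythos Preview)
Your proof is correct and is precisely the adaptation of \cite[Proposition~3.2.29]{BR1} that the paper invokes (without writing out details): the Neumann/resolvent series in the Banach $*$-algebra $\big(D(\delta),\|\cdot\|_1\big)$, closedness via Proposition~\ref{delta_sProperties}$(iv)$, and the reduction to the positive element $p=b^{*}b$ to handle disconnected resolvent sets are exactly the steps of that argument, with the only change being the graded Leibniz rule and the appearance of $\gamma(a)$ in the final formula.
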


\begin{corollary} For all $n\in \NN$, if $a \in D(\delta^{n})$ 
and $\lambda \notin \sigma(a)$ then $(a-\lambda 1)^{-1} \in 
D(\delta^{n})$. 
\end{corollary}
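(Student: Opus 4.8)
The plan is to reduce the statement to the preceding proposition by induction on $n$, treating the resolvent formula there as the base case and engine of the inductive step. The key observation is that the formula
\[
\delta\big((a-\lambda 1)^{-1}\big) = -(\gamma(a)-\lambda 1)^{-1}\delta(a)(a-\lambda 1)^{-1}
\]
expresses $\delta\big((a-\lambda 1)^{-1}\big)$ as a product of three elements, each of which should lie in $D(\delta^{n-1})$ whenever $a\in D(\delta^n)$. Indeed, if $a\in D(\delta^n)$ then $a\in D(\delta^{n-1})$ (using $D(\delta^n)\subset D(\delta^{n-1})$), so by the inductive hypothesis $(a-\lambda 1)^{-1}\in D(\delta^{n-1})$; likewise $\gamma(a)\in D(\delta^n)$ by Proposition \ref{proposition*subalgebras}, so $\gamma(a)-\lambda 1\in D(\delta^n)\subset D(\delta^{n-1})$, and since $\sigma(\gamma(a))=\sigma(a)$ (because $\gamma$ is an automorphism) we get $(\gamma(a)-\lambda 1)^{-1}\in D(\delta^{n-1})$ by the inductive hypothesis applied at level $n-1$; finally $\delta(a)\in D(\delta^{n-1})$ by definition of $D(\delta^n)$.

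First I would set up the induction: the case $n=1$ is exactly the previous proposition (for $n=1$ one needs $(a-\lambda 1)^{-1}\in D(\delta)$, which is its content). Then, assuming the corollary holds for all integers up to $n-1$, I take $a\in D(\delta^n)$ and $\lambda\notin\sigma(a)$. Since $a\in D(\delta)$, the previous proposition already gives $(a-\lambda 1)^{-1}\in D(\delta)$ together with the displayed formula for its $\delta$-image. It then suffices to check that this $\delta$-image lies in $D(\delta^{n-1})$, because $b\in D(\delta)$ with $\delta(b)\in D(\delta^{n-1})$ means precisely $b\in D(\delta^n)$. By the argument of the previous paragraph, the three factors $(\gamma(a)-\lambda 1)^{-1}$, $\delta(a)$, $(a-\lambda 1)^{-1}$ all lie in $D(\delta^{n-1})$, and since $D(\delta^{n-1})$ is a $*$-algebra (Proposition \ref{proposition*subalgebras}) their product lies there as well. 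This closes the induction.

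The only point requiring a little care — and the place I expect a careful reader to pause — is the claim $\sigma(\gamma(a))=\sigma(a)$, needed to apply the inductive hypothesis to $(\gamma(a)-\lambda 1)^{-1}$. This is immediate because $\gamma=\Ad\Gamma$ is a $*$-automorphism of $B(\H)$, hence preserves spectra: $\gamma(a)-\lambda 1=\gamma(a-\lambda 1)$ is invertible in $B(\H)$ if and only if $a-\lambda 1$ is. One should also note that $\gamma(a)\in D(\delta^n)$, which follows from the $\gamma$-invariance of $D(\delta^n)$ established in Proposition \ref{proposition*subalgebras}. With these two remarks in place the proof is entirely formal; there is no genuine analytic obstacle, the content having already been absorbed into the preceding proposition.
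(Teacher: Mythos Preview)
Your proof is correct and is exactly the induction the paper has in mind; indeed the paper states the corollary without proof, as an immediate consequence of the preceding proposition together with the $\gamma$-invariance and $^*$-algebra structure of $D(\delta^{n-1})$ from Proposition~\ref{proposition*subalgebras}. Your remarks that $\sigma(\gamma(a))=\sigma(a)$ and $\gamma(a)\in D(\delta^{n-1})$ are precisely the small checks one needs, and both are handled correctly.
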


\begin{corollary} For all $n\in \NN$, if $a \in D(\delta^{n})$
and $f$ is a complex function holomorphic in a neighbourhood of $\sigma(a)$ 
then $f(a) \in D(\delta^{n})$.
\end{corollary}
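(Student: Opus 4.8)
The plan is to deduce the statement from the previous corollary together with the holomorphic functional calculus, proceeding by induction on $n$. For $n=1$, one wants: if $a\in D(\delta)$ and $f$ is holomorphic in a neighbourhood $U$ of $\sigma(a)$, then $f(a)\in D(\delta)$. The standard approach is to represent $f(a)$ via the Cauchy integral
\[
f(a)=\frac{1}{2\pi i}\int_\Gamma f(\lambda)(\lambda 1-a)^{-1}\,d\lambda,
\]
where $\Gamma$ is a finite union of rectifiable closed curves in $U$ winding once around $\sigma(a)$ and not meeting it. By the preceding proposition each resolvent $(\lambda 1-a)^{-1}$ lies in $D(\delta)$, with $\delta\big((\lambda 1-a)^{-1}\big)=(\gamma(a)-\lambda 1)^{-1}\delta(a)(a-\lambda 1)^{-1}$; this map is norm-continuous in $\lambda$ on the compact set $\Gamma$, so the integral converges in the $\|\cdot\|_1$-norm. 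Since $\big(D(\delta),\|\cdot\|_1\big)$ is a Banach $*$-algebra (the Corollary after Proposition \ref{delta_sProperties}), the integral defines an element of $D(\delta)$, and because $\delta$ is bounded from $\big(D(\delta),\|\cdot\|_1\big)$ to $B(\H)$ it commutes with the Riemann integral. This gives $f(a)\in D(\delta)$.

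For the inductive step, assume the result holds for $n$ and let $a\in D(\delta^{n+1})$, $f$ holomorphic near $\sigma(a)$. Then in particular $a\in D(\delta^n)$, so by the previous corollary each $(\lambda 1-a)^{-1}\in D(\delta^n)$, and since $\delta(a)\in D(\delta^n)$ and $\gamma(a)\in D(\delta^n)$, the formula for $\delta\big((\lambda 1-a)^{-1}\big)$ exhibits it as a product of elements of the Banach $*$-algebra $\big(D(\delta^n),\|\cdot\|_n\big)$, hence $(\lambda 1-a)^{-1}\in D(\delta^{n+1})$ with $\|(\lambda 1-a)^{-1}\|_{n+1}$ locally bounded and in fact norm-continuous in $\lambda$ on $\Gamma$. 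Running the same Cauchy-integral argument as above, but now inside the Banach $*$-algebra $\big(D(\delta^{n+1}),\|\cdot\|_{n+1}\big)$ of Proposition \ref{proposition*subalgebras}, shows $f(a)\in D(\delta^{n+1})$.

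The only genuinely delicate point is the $\|\cdot\|_n$-norm continuity of $\lambda\mapsto(\lambda 1-a)^{-1}$ along $\Gamma$, needed to make sense of the contour integral in the appropriate Banach algebra and to justify interchanging $\delta$ (or its iterates) with the integral. For $n=1$ this follows from the resolvent identity $(\lambda 1-a)^{-1}-(\mu 1-a)^{-1}=(\mu-\lambda)(\lambda 1-a)^{-1}(\mu 1-a)^{-1}$, which gives ordinary norm continuity of both $(\lambda 1-a)^{-1}$ and (via the explicit $\delta$-formula, using the analogous identity for $\gamma(a)$) of $\delta\big((\lambda 1-a)^{-1}\big)$; for higher $n$ one iterates, differentiating the resolvent formula and invoking the submultiplicativity of $\|\cdot\|_n$ established in Proposition \ref{proposition*subalgebras}. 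Once this continuity is in hand, everything else is the routine observation that a Banach-algebra-valued Riemann integral lands in the algebra and is respected by bounded homomorphisms and derivations. Alternatively, and perhaps more cleanly, one can cite that the proof adapts verbatim that of \cite[Proposition 3.2.29]{BR1} (as already invoked for the resolvent statement), replacing $B(\H)$ throughout by the Banach $*$-algebra $\big(D(\delta^n),\|\cdot\|_n\big)$.
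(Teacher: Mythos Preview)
Your argument is correct and matches the paper's implicit approach: the corollary is stated without proof, as it follows immediately from the previous corollary (resolvents of $a$ lie in $D(\delta^n)$, so the spectrum of $a$ in the Banach $*$-algebra $\big(D(\delta^n),\|\cdot\|_n\big)$ coincides with $\sigma(a)$) together with the holomorphic functional calculus in that Banach algebra. Your induction on $n$ is unnecessary, since the previous corollary already gives $(a-\lambda 1)^{-1}\in D(\delta^n)$ for all $n$ directly, allowing you to run the Cauchy-integral argument once inside $\big(D(\delta^n),\|\cdot\|_n\big)$; but this is a stylistic point, not a gap.
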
 
Now consider the positive selfadjoint operator $H \equiv Q^2$ and the 
corresponding derivation $\delta_0$ on $B(\H)$. Then the generator
of the one-parameter group of automorphisms  ${\rm Ad}e^{itH}$ of 
$B(\H)$ is $i\delta_0$. Note that $H$ commutes with $\Gamma$. 

\begin{lemma}\label{lemma:deltaSquare} 
If $a\in D(\delta^2)$ then $a \in D(\delta_0)$ 
and $\delta^2(a)=\delta_0(a)$. 
\end{lemma}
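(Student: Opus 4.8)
The plan is to unwind the definitions and compute $Q^2 a$ on a suitable core. Suppose $a\in D(\delta^2)$, so both $a$ and $\delta(a)$ lie in $D(\delta)$. Since $a\in D(\delta)$ we have $\gamma(a)Q \subset Qa - \delta(a)$, and since $\delta(a)\in D(\delta)$ we have $\gamma(\delta(a))Q \subset Q\delta(a) - \delta^2(a)$. The first containment says: for all $\psi\in D(Q)$, $a\psi\in D(Q)$ and $Qa\psi = \gamma(a)Q\psi + \delta(a)\psi$. I would like to apply $Q$ once more to this identity, so the key point is to control the domain: I need $Qa\psi \in D(Q)$, equivalently $\gamma(a)Q\psi + \delta(a)\psi \in D(Q)$. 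For this I restrict attention to $\psi\in D(H) = D(Q^2)$, which is a core for $Q$ (standard fact for selfadjoint $Q$). For such $\psi$, $Q\psi\in D(Q)$, hence $\gamma(a)Q\psi\in D(Q)$ because $\gamma(a)\in D(\delta)$; and $\delta(a)\psi \in D(Q)$ because $\delta(a)\in D(\delta)$ and $\psi\in D(Q)$. Therefore $a\psi\in D(Q^2) = D(H)$ for all $\psi\in D(H)$, i.e. $aD(H)\subset D(H)$.

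Next I compute $Ha\psi = Q(Qa\psi)$ for $\psi\in D(H)$:
\begin{align*}
Ha\psi &= Q\big(\gamma(a)Q\psi + \delta(a)\psi\big) \\
&= \big(Q\gamma(a)\big)(Q\psi) + Q\delta(a)\psi.
\end{align*}
Now apply the first-order relations again. Using $\gamma(a)\in D(\delta)$ with $\delta(\gamma(a)) = -\gamma(\delta(a))$ (Proposition~\ref{delta_sProperties}$(i)$): for the vector $Q\psi\in D(Q)$ we get $Q\gamma(a)(Q\psi) = \gamma(\gamma(a))Q(Q\psi) + \delta(\gamma(a))Q\psi = a H\psi - \gamma(\delta(a))Q\psi$. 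And using $\delta(a)\in D(\delta)$: for $\psi\in D(Q)$, $Q\delta(a)\psi = \gamma(\delta(a))Q\psi + \delta^2(a)\psi$. Adding these, the two $\gamma(\delta(a))Q\psi$ terms cancel and I obtain, for all $\psi\in D(H)$,
\begin{equation*}
Ha\psi = aH\psi + \delta^2(a)\psi.
\end{equation*}
This is exactly the statement that $aD(H)\subset D(H)$ and $[H,a]\psi = \delta^2(a)\psi$ on $D(H)$, with $\delta^2(a)$ bounded. By the analogue of Lemma~\ref{corelemma} for the \emph{ungraded} derivation $\delta_0$ associated with $H$ — applied with the core $D = D(H)$ for $H$ — this yields $a\in D(\delta_0)$ and $\delta_0(a) = \delta^2(a)$. (If one prefers to invoke Lemma~\ref{corelemma} verbatim, note that $H$ is even so $\gamma$ acts trivially in the relevant identity and the graded and ungraded pictures coincide; alternatively one re-runs the two-line core argument of Lemma~\ref{corelemma} directly.)

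The main obstacle is the domain bookkeeping in the first paragraph: one must be careful that $aD(H)\subset D(H)$, because a priori $a\in D(\delta)$ only guarantees $aD(Q)\subset D(Q)$, and without knowing $a$ maps $D(H)$ into $D(H)$ the expression $Q(Qa\psi)$ is not even defined. The resolution is precisely to use $\delta(a)\in D(\delta)$ (i.e. the hypothesis $a\in D(\delta^2)$, not merely $a\in D(\delta)$) together with the fact that $D(H)$ is a core for $Q$, so that all identities can be checked on $D(H)$ and then extended. Once the domains are under control the computation is the routine cancellation shown above, and boundedness of $\delta^2(a)$ is automatic from $a\in D(\delta^2)$.
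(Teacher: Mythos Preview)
Your proof is correct and follows essentially the same route as the paper's: restrict to $\psi\in D(H)=D(Q^2)$, use $\gamma(a),\delta(a)\in D(\delta)$ to see that $a\psi\in D(H)$, expand $Q^2a\psi$ and observe the cancellation $\delta(\gamma(a))+\gamma(\delta(a))=0$ to get $Ha\psi=aH\psi+\delta^2(a)\psi$. The only cosmetic difference is the last step: the paper invokes \cite[Proposition~3.2.55]{BR1} (which identifies the commutator characterisation with the generator definition of $\delta_0$), whereas you phrase it as an ungraded analogue of Lemma~\ref{corelemma}; these amount to the same thing.
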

\begin{proof} Assume that $a\in D(\delta^2)$ and that 
$\psi \in D(H) \subset D(Q)$. Then $a\psi \in D(Q)$ and 
$Qa\psi -\gamma(a)Q\psi = \delta(a)\psi$. Now, 
$Q\psi \in D(Q)$ and moreover, since $\gamma(a) \in D(\delta)$
and $\delta(a)\in D(\delta)$ we have 
$\delta(a)\psi+\gamma(a)Q\psi \in D(Q)$. Hence $a\psi \in D(H)$
and 
\begin{eqnarray*}
Ha\psi & = & Q^2a\psi = Q\delta(a)\psi+Q\gamma(a)Q\psi \\
& = & \delta^2(a)\psi +\gamma(\delta(a))Q\psi + 
\delta(\gamma(a))Q\psi + aH\psi \\
& = & \delta^2(a)\psi + aH \psi .
\end{eqnarray*}
Since $\psi \in D(H)$ was arbitrary, the conclusion follows from 
\cite[Proposition 3.2.55]{BR1}.
\end{proof}
For any $f \in L^1(\RR)$ on $\RR$  and any $a \in B(\H)$ we define
\begin{equation} 
a_f \equiv \int_\RR e^{itH}ae^{-itH}f(t){\rm d}t .  
\end{equation} 
\begin{lemma}
\label{lemmaA_f} 
Assume that $a \in D(\delta)$ and that 
$f\in L^1(\RR)$. Then $a_f \in D(\delta)$ and 
$\delta(a_f)=\delta(a)_f$.
\end{lemma}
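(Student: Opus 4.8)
The plan is to show directly that $a_f$ lies in $D(\delta)$ by verifying the defining relation \eqref{eq:def-D(delta)}, namely that $\gamma(a_f)Q \subset Q a_f - \delta(a)_f$, and the natural way to do this is to push the superderivation through the integral defining $a_f$. First I would observe that $\gamma$ commutes with $\mathrm{Ad}\,e^{itH}$, since $H = Q^2$ commutes with $\Gamma$ (as noted just before the statement), so $\gamma(e^{itH}ae^{-itH}) = e^{itH}\gamma(a)e^{-itH}$. Hence, formally, $\gamma(a_f) = \int_\RR e^{itH}\gamma(a)e^{-itH}f(t)\,\mathrm{d}t = \gamma(a)_f$, and one expects $\delta(a_f) = \int_\RR e^{itH}\delta(a)e^{-itH}f(t)\,\mathrm{d}t = \delta(a)_f$.

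The key technical step is to make the manipulation $\delta(a_f) = \delta(a)_f$ rigorous. Here I would use Lemma \ref{corelemma}: take $D = D(H)$, which is a core for $Q$ (it is a core for $H=Q^2$, hence for $Q$ by the spectral theorem applied to $Q$, since $D(H)$ is dense in $D(Q)$ in the graph norm of $Q$). For $\psi \in D(H)$, the vectors $e^{-itH}\psi$ stay in $D(H) \subset D(Q)$, so $a e^{-itH}\psi \in D(Q)$ and $Q a e^{-itH}\psi = \gamma(a)Q e^{-itH}\psi + \delta(a) e^{-itH}\psi$. Applying $e^{itH}$ and integrating against $f$, one gets that $a_f\psi = \int e^{itH} a e^{-itH}\psi\, f(t)\,\mathrm dt$ lies in $D(Q)$ — provided one can interchange $Q$ with the (Bochner) integral — and that $Q a_f \psi - \gamma(a_f) Q\psi = \delta(a)_f\psi$. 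To justify pulling $Q$ inside the integral, I would note that $e^{-itH}$ commutes with $H$ hence leaves $D(Q)$ invariant with uniformly bounded graph norm on bounded $t$-intervals is too strong; instead the clean route is: the map $t \mapsto Q e^{itH} a e^{-itH}\psi = e^{itH}\big(\gamma(a)Qe^{-itH}\psi + \delta(a)e^{-itH}\psi\big)$ is continuous and bounded in norm by $(\|\gamma(a)\|\|Q\psi\| + \|\delta(a)\|\|\psi\|)\cdot|f(t)|$, hence Bochner integrable; since $Q$ is closed, the integral of $e^{itH}ae^{-itH}\psi\, f(t)$ lies in $D(Q)$ and $Q$ commutes with it.

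Having shown the map $\psi \mapsto Q a_f\psi - \gamma(a_f)Q\psi$ agrees on $D(H)$ with the bounded operator $\delta(a)_f$ (which is bounded with $\|\delta(a)_f\| \le \|f\|_1 \|\delta(a)\|$), and recognizing $\gamma(a_f) = \gamma(a)_f$ from the commutation of $\gamma$ with $\mathrm{Ad}\,e^{itH}$, Lemma \ref{corelemma} gives immediately $a_f \in D(\delta)$ with $\delta(a_f) = \delta(a)_f$. I expect the main obstacle to be the bookkeeping around the Bochner integral and the domain argument — specifically, being careful that $D(H)$ is genuinely a core for $Q$ and that closedness of $Q$ legitimately lets one conclude $a_f\psi \in D(Q)$ with $Q$ commuting past the integral — rather than anything structurally deep. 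Everything else is a routine application of the already-established fact that $\gamma$ intertwines correctly with the modular-type flow $\mathrm{Ad}\,e^{itH}$.
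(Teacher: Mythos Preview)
Your proposal is correct and is precisely the detailed argument behind the paper's one-line proof, which simply invokes that $e^{itH}$ commutes with $Q$ and $\Gamma$. You have spelled out the domain bookkeeping (via Lemma~\ref{corelemma} on the core $D(H)$ and the closedness of $Q$ to pass $Q$ through the Bochner integral) that the paper leaves implicit.
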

\begin{proof} This is a straightforward consequence of the fact 
that the one-parameter group of unitaries $e^{itH}$ commutes
with $Q$ and $\Gamma$. 
\end{proof}
\begin{lemma} 
\label{lemmaA_f2}
Assume that $a \in D(\delta)$ and that
$f\in C^\infty_c(\RR)$. Then $a_f \in D(\delta^2)$ and 
$\delta^2(a_f)=ia_{f'}$.
\end{lemma}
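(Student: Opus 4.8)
The plan is to combine Lemma~\ref{lemmaA_f} with the identity $\delta^2 = \delta_0$ on $D(\delta^2)$ coming from Lemma~\ref{lemma:deltaSquare}, exploiting that $a_f$ is smooth for the automorphism group $\Ad e^{itH}$ generated by $i\delta_0$. First I would recall the standard smoothing fact: if $a\in B(\H)$ and $f\in C^\infty_c(\RR)$, then $a_f$ lies in the domain of all powers of the generator $i\delta_0$, and the generator acts on $a_f$ by differentiating the test function, so that $\delta_0(a_f) = ia_{f'}$ (equivalently $-i$ times $a_{f'}$ up to the sign convention fixed by $i\delta_0$ being the generator; the precise sign is read off from $\frac{d}{dt}\big|_{t=0}\Ad e^{itH}(a_f)$). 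This gives $a_f\in D(\delta_0)$ with $\delta_0(a_f)=ia_{f'}$, and iterating, $a_f\in D(\delta_0^2)$ with $\delta_0^2(a_f)=i^2a_{f''} = -a_{f''}$.

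Next I would upgrade this from $\delta_0$-regularity to $\delta$-regularity. From Lemma~\ref{lemmaA_f}, since $a\in D(\delta)$ we already know $a_f\in D(\delta)$ and $\delta(a_f)=\delta(a)_f$. Now apply Lemma~\ref{lemmaA_f} once more: $\delta(a)\in B(\H)$, and $\delta(a)_f \in D(\delta)$ would require $\delta(a)\in D(\delta)$, which we do not have. So instead I would go through $\delta_0$: the point is that $\delta(a)_f = (\delta(a))_f$ is a smooth element, hence in $D(\delta_0)$, and one shows directly that a $\delta_0$-smooth element which already lies in $D(\delta)$ lies in $D(\delta^2)$ with $\delta^2$ agreeing with... no, that is not automatic either. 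The cleaner route: show $a_f\in D(\delta^2)$ by verifying the defining condition \eqref{eq:def-D(delta)} for $\delta(a_f)=\delta(a)_f$, i.e. that $\delta(a)_f\in D(\delta)$ with $\delta(\delta(a)_f)=ia_{f'}$. For this I would use Lemma~\ref{corelemma} with the core $D(H)$ for $Q$: one must check that $\gamma(\delta(a)_f)Q \subset Q\,\delta(a)_f - ia_{f'}$, and the computation here is a commutator manipulation of $Q$ with the smearing integral, using that $e^{itH}$ commutes with $Q$ and $\Gamma$, together with $a\in D(\delta)$ so that $Qa\psi - \gamma(a)Q\psi = \delta(a)\psi$ makes sense, and integration by parts in $t$ to convert a factor of $H$ (arising from $Q^2$-type terms) into a derivative on $f$, producing the $a_{f'}$ term.

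The main obstacle, then, is the bookkeeping in the step ``$\delta(a)_f\in D(\delta)$ and $\delta(\delta(a)_f)=ia_{f'}$'': one has to differentiate under the integral sign and integrate by parts carefully, keeping track of the grading $\gamma$ (each application of $\delta$ twists by $\gamma$, but $\gamma$ commutes with $e^{itH}$ and with $Q$ up to the known sign), and justify that the resulting expression is the $f'$-smearing of $a$ rather than of $\delta^2(a)$ — the latter would be circular since $\delta^2(a)$ need not exist. The key algebraic input that makes $a_{f'}$ (and not something involving $\delta(a)$) appear is precisely Lemma~\ref{lemma:deltaSquare}'s mechanism in disguise: $Q$ acting twice on the smeared vector produces $H a_f\psi$ plus a term that, after integration by parts moving $H$ off onto $f$ via $He^{itH} = -i\frac{d}{dt}e^{itH}$, becomes $i a_{f'}\psi$. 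So I would organize the proof as: (1) quote the $\delta_0$-smoothing to get $a_f\in D(\delta_0^2)$, $\delta_0(a_f)=ia_{f'}$; (2) invoke Lemma~\ref{lemmaA_f} for $\delta(a_f)=\delta(a)_f$; (3) apply Lemma~\ref{corelemma} on the core $D(H)$ to show $\delta(a)_f\in D(\delta)$ with the claimed value, the integration-by-parts identity $He^{itH}=-i\partial_t e^{itH}$ being the crux; (4) conclude $a_f\in D(\delta^2)$ and $\delta^2(a_f)=ia_{f'}$. Alternatively, once (1) and Lemma~\ref{lemma:deltaSquare} are in hand, if one can independently show $a_f\in D(\delta^2)$ then $\delta^2(a_f)=\delta_0(a_f)=ia_{f'}$ drops out immediately, which would shorten (3) to just checking membership in $D(\delta^2)$ rather than computing the value.
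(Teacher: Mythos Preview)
Your final plan (1)--(4) is exactly the paper's argument: one shows $a_f\psi\in D(H)$ with $Ha_f\psi-a_fH\psi=ia_{f'}\psi$ for $\psi\in D(H)$ (the $\delta_0$-smoothing), observes that the same applies to $\delta(a)_f$ so that $\delta(a_f)\psi=\delta(a)_f\psi\in D(H)\subset D(Q)$, then computes $Q\delta(a_f)\psi=Ha_f\psi-Q\gamma(a_f)Q\psi=ia_{f'}\psi+\gamma(\delta(a_f))Q\psi$ and invokes Lemma~\ref{corelemma} on the core $D(H)$. Your identification of the ``key algebraic input'' (that $Q$ acting twice yields $H$, which is then traded for a $t$-derivative on $f$) is precisely the mechanism the paper uses.
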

\begin{proof} For any $\psi \in D(H)$, we have $Q\psi \in D(Q)$.
Moreover, a standard and straightforward argument shows that 
the map $t\mapsto e^{iHt}a_fe^{-iHt} \in B(\H)$ is differentiable at $t=0$  
and that the corresponding derivative is equal to $-a_{f'}$. 
Hence, by \cite[Proposition 3.2.55]{BR1}, 
$a_f\psi \in D(H)$ and $Ha_f\psi= ia_{f'}\psi +a_fH\psi$.  
Similarly $\delta (a)_f \psi \in D(H)$. It follows that  
$\delta(a_f)\psi=\delta(a)_f\psi \in D(Q)$ and 
\begin{eqnarray*}
Q\delta(a_f)\psi & = & Ha_f\psi-Q\gamma(a)_fQ\psi  \\
& = & Ha_f\psi -\delta(\gamma(a_f))Q\psi -a_fH\psi \\
& = & \delta_0(a_f)\psi-\delta(\gamma(a_f))Q\psi \\
& = & ia_{f'}\psi -\delta(\gamma(a_f))Q\psi \\
& = & ia_{f'}\psi + \gamma(\delta(a_f))Q\psi. 
\end{eqnarray*} 
Since $D(H)$ is a core for $Q$ it follows by Lemma \ref{corelemma}
that $a_f \in D(\delta^2)$ and $\delta^2(a_f)=ia_{f'}$.
\end{proof}
From Lemma \ref{lemmaA_f} and Lemma \ref{lemmaA_f2} the following
proposition can be easily proved by induction. 
\begin{proposition}
\label{propositionA_f}
Assume that $a \in D(\delta)$ and that 
$f\in C^\infty_c(\RR)$. Then $a_f\in C^\infty(\delta)$. 
\end{proposition}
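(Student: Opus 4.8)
The plan is to prove Proposition \ref{propositionA_f} by induction on $n$, showing that $a_f \in D(\delta^n)$ for every $n \in \NN$ whenever $a \in D(\delta)$ and $f \in C^\infty_c(\RR)$, which gives $a_f \in C^\infty(\delta) = \cap_n D(\delta^n)$. The base case $n=1$ is immediate from Lemma \ref{lemmaA_f}, which states $a_f \in D(\delta)$ with $\delta(a_f) = \delta(a)_f$; the case $n=2$ is exactly Lemma \ref{lemmaA_f2}, giving $\delta^2(a_f) = i a_{f'}$.

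For the inductive step, the key observation is that the operations appearing in the two lemmas interact well. Suppose we know that $b_g \in D(\delta^n)$ for all $b \in D(\delta)$ and all $g \in C^\infty_c(\RR)$. Given $a \in D(\delta)$ and $f \in C^\infty_c(\RR)$, I want to show $a_f \in D(\delta^{n+1})$. By Lemma \ref{lemmaA_f2} we have $a_f \in D(\delta^2)$ with $\delta^2(a_f) = i a_{f'}$, and in particular $\delta(a_f) \in D(\delta)$. Applying $\delta$ once more and using $\delta(a_f) = \delta(a)_f$ (Lemma \ref{lemmaA_f}) together with Lemma \ref{lemmaA_f2} applied to $\delta(a) \in D(\delta)$, we get $\delta^2(a_f) = \delta\big(\delta(a)_f\big)$, which lies in $D(\delta)$ and equals $i\,\delta(a)_{f'}$. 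But $f' \in C^\infty_c(\RR)$ and $\delta(a) \in D(\delta)$, so the inductive hypothesis yields $\delta(a)_{f'} \in D(\delta^n)$, hence $\delta^2(a_f) = i\,\delta(a)_{f'} \in D(\delta^n)$, which means $a_f \in D(\delta^{n+2}) \subset D(\delta^{n+1})$. To make the induction clean it is better to formulate the statement as: for all $a \in D(\delta)$ and $f \in C^\infty_c(\RR)$ one has $a_f \in D(\delta^{n})$ and $\delta^{n}(a_f)$ is of the form $a^{(n)}_{g}$ (up to a power of $i$) for suitable $a^{(n)} \in D(\delta)$ and $g \in C^\infty_c(\RR)$ obtained by iterating the rules $\delta(\,\cdot\,_h) = \delta(\,\cdot\,)_h$ and $\delta^2(\,\cdot\,_h) = i(\,\cdot\,)_{h'}$; then the inductive step is just bookkeeping with these two identities, splitting $n+1$ into cases according to parity and peeling off one or two applications of $\delta$.

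The main obstacle, such as it is, is organizational rather than analytic: one must be careful that at each stage the inner operator still lies in $D(\delta)$ (so that Lemmas \ref{lemmaA_f} and \ref{lemmaA_f2} apply) and that the smearing function stays in $C^\infty_c(\RR)$ under differentiation — both of which hold trivially since $C^\infty_c(\RR)$ is closed under $f \mapsto f'$ and $D(\delta)$ is closed under $\delta$ on the relevant subdomains. No genuinely new estimate is needed; the whole content is that the two smearing identities already established are enough to bootstrap all higher derivatives. Hence the proof is indeed ``easily proved by induction'' as claimed, and I would present it in a few lines invoking Lemma \ref{lemmaA_f} and Lemma \ref{lemmaA_f2} at the inductive step exactly as above.
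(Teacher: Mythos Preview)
Your overall strategy --- induction using Lemmas \ref{lemmaA_f} and \ref{lemmaA_f2} --- is exactly what the paper intends. However, your inductive step contains a genuine error: you assert that $\delta(a) \in D(\delta)$ and then invoke Lemma \ref{lemmaA_f2} and the inductive hypothesis with $b=\delta(a)$. But the hypothesis of the proposition is only $a \in D(\delta)$; requiring $\delta(a)\in D(\delta)$ is the same as requiring $a\in D(\delta^2)$, which is not assumed. Your closing remark that ``$D(\delta)$ is closed under $\delta$ on the relevant subdomains'' is precisely this unjustified step. There is also a computational slip: you write $\delta^2(a_f)=i\,\delta(a)_{f'}$, but Lemma \ref{lemmaA_f2} already gave you $\delta^2(a_f)=i\,a_{f'}$ one line earlier, and these two expressions do not agree.

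The fix is immediate once you notice that the correct identity $\delta^2(a_f)=i\,a_{f'}$ keeps the operator $a$ fixed and only differentiates the smearing function. The clean induction is: for every $k\geq 1$ one has $a_f\in D(\delta^{2k})$ and $\delta^{2k}(a_f)=i^k\,a_{f^{(k)}}$. The base case is Lemma \ref{lemmaA_f2}, and the inductive step applies Lemma \ref{lemmaA_f2} to $a_{f^{(k)}}$ --- legitimate because $a\in D(\delta)$ throughout and $f^{(k)}\in C^\infty_c(\RR)$. This already yields $a_f\in C^\infty(\delta)$; if one wants the odd powers explicitly, Lemma \ref{lemmaA_f} then gives $\delta^{2k+1}(a_f)=i^k\,\delta(a)_{f^{(k)}}$, where now $\delta(a)$ appears only as a fixed bounded operator being smeared, never as something to which $\delta$ must be applied again.
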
   
\begin{corollary} $C^\infty(\delta)$  is a core for $\delta$ 
(with respect to the $\sigma$-weak topology), namely 
$\delta$ coincides with the ($\sigma$-weak)--($\sigma$-weak) closure of 
its restriction to $C^\infty(\delta)$. 
\end{corollary}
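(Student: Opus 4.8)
The plan is to deduce the corollary from Proposition~\ref{propositionA_f} together with the $\sigma$-weak closedness of $\delta$ recorded in the Remark after Proposition~\ref{delta_sProperties}. Fix $a\in D(\delta)$; I must produce a net $a_j\in C^\infty(\delta)$ with $a_j\to a$ and $\delta(a_j)\to\delta(a)$, both $\sigma$-weakly. The natural candidates are the mollified operators $a_{f}$ with $f$ running over an approximate identity in $C^\infty_c(\RR)$: by Proposition~\ref{propositionA_f} each $a_f$ lies in $C^\infty(\delta)$, and by Lemma~\ref{lemmaA_f} we have $\delta(a_f)=\delta(a)_f$, so it suffices to show that $b_f\to b$ $\sigma$-weakly for any fixed $b\in B(\H)$ as $f$ ranges over a suitable approximate identity.

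Concretely, I would pick $f_n(t)=n f_1(nt)$ with $f_1\in C^\infty_c(\RR)$, $f_1\ge 0$, $\int f_1=1$, so that $\{f_n\}$ is an approximate identity. Then for $b\in B(\H)$ and vectors $\xi,\eta\in\H$,
\begin{equation*}
(b_{f_n}\xi,\eta)=\int_\RR (e^{itH}be^{-itH}\xi,\eta)\,f_n(t)\,{\rm d}t,
\end{equation*}
and since $t\mapsto (e^{itH}be^{-itH}\xi,\eta)$ is a bounded continuous function of $t$ (strong continuity of $t\mapsto e^{itH}$), the standard approximate-identity argument gives $(b_{f_n}\xi,\eta)\to (b\xi,\eta)$. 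Moreover $\|b_{f_n}\|\le\|b\|$ for all $n$, so the convergence of the matrix coefficients on all of $\H$ upgrades to $\sigma$-weak convergence (a bounded net converging weakly on $\H$ converges $\sigma$-weakly). Applying this to $b=a$ and to $b=\delta(a)$ simultaneously yields $a_{f_n}\to a$ and $\delta(a_{f_n})=\delta(a)_{f_n}\to\delta(a)$ $\sigma$-weakly.

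Finally, invoking the Remark that $\delta$ is $\sigma$-weak--$\sigma$-weak closed, the pairs $(a_{f_n},\delta(a_{f_n}))$ lie in the graph of $\delta|_{C^\infty(\delta)}$ and converge in the $\sigma$-weak topology of $B(\H)\oplus B(\H)$ to $(a,\delta(a))$; hence $(a,\delta(a))$ lies in the $\sigma$-weak closure of that graph. Since $a\in D(\delta)$ was arbitrary, $\delta$ is the $\sigma$-weak--$\sigma$-weak closure of its restriction to $C^\infty(\delta)$, which is precisely the assertion that $C^\infty(\delta)$ is a core for $\delta$. The only mild subtlety — and the one point I would be careful to state cleanly rather than the main ``obstacle'' — is the passage from weak (i.e. pointwise-on-$\H$) convergence of the uniformly bounded net $\{b_{f_n}\}$ to $\sigma$-weak convergence; everything else is a routine approximate-identity computation combined with the already-established facts.
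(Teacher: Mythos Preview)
Your proof is correct and is precisely the natural argument implicit in the paper, which states the corollary without proof immediately after Proposition~\ref{propositionA_f}: mollify $a\in D(\delta)$ by an approximate identity $f_n$, use Proposition~\ref{propositionA_f} to get $a_{f_n}\in C^\infty(\delta)$ and Lemma~\ref{lemmaA_f} to get $\delta(a_{f_n})=\delta(a)_{f_n}$, then let $n\to\infty$. The only point worth stating explicitly (as you do) is that the uniformly bounded net $b_{f_n}$ converges $\sigma$-weakly because on norm-bounded sets the weak operator and $\sigma$-weak topologies coincide.
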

\begin{corollary} $C^\infty(\delta)$ is dense in $B(\H)$ in the 
strong topology.
\end{corollary}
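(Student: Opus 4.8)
The plan is to deduce this corollary directly from the two preceding results together with part $(v)$ of Proposition \ref{delta_sProperties}. The key point is that we already know $D(\delta)$ is strongly dense in $B(\H)$, so it suffices to approximate an arbitrary $a \in D(\delta)$ strongly by elements of $C^\infty(\delta)$; the full density then follows by a standard diagonal argument. First I would fix $a \in D(\delta)$ and choose an approximate identity $f_n \in C^\infty_c(\RR)$, i.e.\ $f_n \geq 0$, $\int_\RR f_n(t)\,\mathrm dt = 1$, with supports shrinking to $\{0\}$. By Proposition \ref{propositionA_f} each $a_{f_n}$ lies in $C^\infty(\delta)$.

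Next I would check that $a_{f_n} \to a$ strongly as $n \to \infty$. Writing
\begin{equation*}
a_{f_n}\psi - a\psi = \int_\RR \big(e^{itH}ae^{-itH} - a\big)\psi\, f_n(t)\,\mathrm dt ,
\end{equation*}
this follows from strong continuity of $t \mapsto e^{itH}ae^{-itH}\psi$ at $t=0$ (which in turn follows from strong continuity of $e^{itH}$) together with the fact that the $f_n$ concentrate at the origin and have integral one; one also notes $\|a_{f_n}\| \leq \|a\|$, so the family is uniformly bounded, which is what one needs to pass from strong convergence on a dense set to strong convergence everywhere. This shows $C^\infty(\delta)$ is contained in the strong closure of its own... rather, that the strong closure of $C^\infty(\delta)$ contains $D(\delta)$.

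Finally I would invoke Proposition \ref{delta_sProperties}$(v)$: $D(\delta)$ is strongly dense in $B(\H)$. Since strong closure is idempotent (the strong closure of a set that is strongly dense in a strongly dense set is everything), we conclude that $C^\infty(\delta)$ is strongly dense in $B(\H)$. Concretely: given $b \in B(\H)$, a vector $\xi$, and $\e > 0$, pick $a \in D(\delta)$ with $\|(b-a)\xi\| < \e/2$, then pick $n$ with $\|(a - a_{f_n})\xi\| < \e/2$; then $a_{f_n} \in C^\infty(\delta)$ satisfies $\|(b - a_{f_n})\xi\| < \e$, and the multi-vector version is identical since all the approximations above work simultaneously for finitely many vectors.

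There is no real obstacle here; the only mild subtlety is making sure the approximation $a_{f_n}\psi \to a\psi$ is genuinely strong (valid on all of $\H$) and not merely weak, but this is guaranteed by the uniform bound $\|a_{f_n}\| \le \|a\|\,\|f_n\|_1 = \|a\|$ combined with the dominated convergence / approximate-identity estimate above. I would therefore expect the proof to be no more than a few lines, citing Proposition \ref{propositionA_f} and Proposition \ref{delta_sProperties}$(v)$.
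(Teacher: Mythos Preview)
Your proposal is correct and is precisely the argument the paper has in mind: the corollary is stated without proof immediately after Proposition~\ref{propositionA_f}, and the intended derivation is exactly to combine that proposition with Proposition~\ref{delta_sProperties}$(v)$ via an approximate identity $f_n \in C^\infty_c(\RR)$, just as you do. There is nothing to add.
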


\section{Spectral triples in conformal field theory}
\label{SectSpectralTriples}
The purpose of this section is to state our definitions of spectral triple and 
give a few comments on Connes definition and related matters.

Firstly we will state the definitions suitable for the Ramond algebra case.
\begin{definition}\label{def:ST1}
A ($\theta$-summable)  {\em graded spectral triple} $(\gA, \H, Q)$ consists of a 
graded Hilbert space $\H$, where the selfadjoint grading unitary is denoted by 
$\Gamma$, a unital $^*$-algebra $\gA \subset B(\H)$ graded by $\gamma \equiv \Ad 
(\Gamma) $, and an odd selfadjoint operator $Q$ on $\H$ as follows:
\begin{itemize}
\item $\gA$ is contained in $D(\delta)$, the domain of the superderivation 
$\delta =[Q,\,\cdot\,]$  as in Sect. \ref{SectSuperDerivations};
\item For every $\beta > 0$, Tr$(e ^{-\beta Q^2})<\infty$ 
($\theta$-summability).
\end{itemize}
The operator $Q$ is called the {\em supercharge} operator, its square the 
{\em Hamiltonian}.
\end{definition}
\begin{remark}\label{Connesdefinition}
Restricting to the even subalgebra $\gA_+$ of $\gA$, the above definition is 
essentially Connes  \cite{C} (see also \cite{CM}) definition of a (even) 
spectral triple $(\gA_+, \H, Q)$. 
This is the fundamental object for index theorems and evaluating on 
$K$-theory elements. In this case the supercharge $Q$ 
is traditionally called {\em Dirac operator} and denoted by $D$. 
\end{remark}
\begin{remark}\label{remarkExampleSTriple} 
Let $\H$ Hilbert space graded by $\Gamma$ and let  
$\A$ be a unital *-subalgebra of $B(\H)$ such that 
$\gamma(\A)=\A$. Let moreover $\gA \equiv \A\cap 
C^\infty(\delta)$. 
Then, provided Tr$(e ^{-\beta Q^2})<\infty$ for all $\beta > 0$, 
$(\gA , \H, Q)$ is a graded spectral triple  in the sense of 
Definition \ref{def:ST1}. Note also that if $\A$ is a von Neumann algebra then 
$\delta$ restricts to a weak-weak closed superderivation of $\A$.
\end{remark}
Our spectral triples will satisfy an additional property which is described 
in the following definition taken from \cite{JLO}.
\begin{definition}\label{def:Qalgebra}
A {\em quantum algebra}  $(\gA, \H, Q)$
is a ($\theta$-summable) graded spectral triple such that 
$\delta (\gA) \subset \gA$. 
\end{definition}
\begin{remark}\label{orderone}
Let $(\gA, \H, Q)$ be a quantum algebra, thus the additional property 
$\delta(\gA) \subset \gA$ is satisfied, and let $J:\H \mapsto \H$
be an antiunitary involution such that $J\gA J \subset \gA'$.
Then we have $J\gA J \subset \delta(\gA)'$
which is essentially the order one condition for the operator $Q$ 
(see e.g. \cite{CM}).
If we restrict to the associated even spectral triples we have
$\delta(\gA_+)\subset \gA_-$ and hence $\delta(\gA_+)\cap \gA_+ =\{ 0 \}$.
However $J\gA_+J \subset \delta(\gA)' \subset \delta(\gA_+)'$ and the
order one condition for $Q$ is still satisfied.
\end{remark}
\begin{remark}\label{remarkQalgebra} 
If $(\gA, \H, Q)$ is a quantum algebra clearly we have 
$\gA \subset C^\infty(\delta)$. Conversely let
$\H$ and $\A$ as in Remark \ref{remarkExampleSTriple}. 
Suppose that $\delta(a) \in \A$ for every $a \in \A \cap D(\delta)$ and let $\gA 
\equiv \A\cap C^\infty(\delta)$. Then, provided Tr$(e ^{-\beta Q^2})<\infty$ 
for all $\beta > 0$, $(\gA , \H, Q)$ is a quantum algebra.
\end{remark}
\begin{remark}\label{rem:ST}
The supercharge operator $Q$ appears in supersymmetric field theories and its 
square $Q^2$ is the Hamiltonian. In conformal field theory, the subject of 
this paper, it will be (up to an additive constant) the conformal Hamiltonian 
$L^\l_0$ in the considered representation $\lambda$, c.f. Section 
\ref{Sect:R}. 
Then the $\theta$-summability condition is automatically satisfied under very 
general conditions.
\end{remark}
In the Neveu-Schwarz case, see Section \ref{Sect:NS}, we will have a 
Hamiltonian $H$ and a superderivation $\delta$ on the algebra $\gA$, 
{\em without} a supercharge operator. Namely there can be no odd selfadjoint 
operator $Q$ satisfying $Q^2 =H$ and $\delta = [Q,\,\cdot\,]$. To treat also 
this 
case we need to generalise the definition of spectral triple. However to express 
the condition $Q^2=H$ in terms of the superderivation $\delta$ we need to give a 
meaning to its square $\delta^2$. We are thus led to assume from the beginning
the additional condition $\delta(\gA) \subset \gA$ and thus to generalise only the 
notion of quantum algebra. This will suffice for the purposes of this paper. 
\begin{definition}\label{def:Qalgebra2}
 A {\em generalised quantum algebra} $(\gA , \H, \delta)$ 
consists of a graded Hilbert space $\H$, where the selfadjoint grading unitary is 
denoted by $\Gamma$, a unital *-algebra $\gA \subset B(\H)$ graded by $\gamma 
\equiv \Ad (\Gamma) $, and an antisymmetric odd superderivation 
$\delta: \gA \rightarrow \gA$, i.e., a linear map satisfying
\begin{eqnarray*}
\delta(a^*) &=& -\delta(\gamma(a))^* \\
\delta(\gamma(a)) &=& -\gamma(\delta(a)) \\ 
\delta(ab) &=& \delta(a)b + \gamma(a)\delta (b)
\end{eqnarray*}
$a,b \in D(\delta)$, with the following properties:
\begin{itemize}
\item $\delta$ is $\sigma$-weakly closable, i.e. it extends to a 
($\sigma$-weakly)--($\sigma$-weakly) closed superderivation of the 
von Neumann algebra $\gA''$. 

\item There exists an even positive selfadjoint operator $H$ on 
$\H$ (the Hamiltonian) such that
for every  $a \in \gA$ and every $\psi \in D(H)$, $a\psi\in D(H)$ and 
$Ha\psi- aH\psi= \delta^2(a)\psi.$
 \item[$\bullet$] For every $\beta > 0$, the operator $e ^{-\beta H}$ is of 
trace class.
\end{itemize}
\end{definition}
In the following two sections we will construct spectral triples of the above  
types. Indeed we shall have nets of spectral triples in the following sense.
Let $\I$ be the family of nonempty, nondense, open intervals of
the unit circle $S^1=\{z\in \CC: |z|=1\}$ and let 
\[
\I_0 \equiv \{I\in \I: \overline{I} \subset S^1\setminus\{-1\}\}, 
\]
where $\overline{I}$ denotes the closure of the interval $I\in \I$. 
\begin{definition}
A {\em net of graded spectral triples} $(\gA , \H, Q)$ on $S^1$ 
(resp. $S^1\setminus\{-1\}$) 
consists of graded Hilbert space $\H$, an odd selfadjoint operator $Q$ and 
a net $\gA$ of unital *-algebras on $\I$ (resp. $\I_0$) acting on $\H$, 
i.e. a map from $\I$ (resp. $\I_0$) into the family of unital *-subalgebras of
$B(\H)$ which satisfies isotony property 
$$\gA(I_1)\subset \gA(I_2)\quad {\rm if}\; I_1\subset I_2,$$
such that $(\gA(I) , \H, Q)$ is a graded spectral triple for all $I\in \I$
(resp. $I\in \I_0$). 
If the net satisfies the additional property 
$\delta(\gA(I))\subset \gA(I)$, $\delta =[Q,\,\cdot\,]$, for all $I\in \I$ 
(resp. $I\in \I_0$) then we say that $(\gA , \H, Q)$ is a {\em net of quantum algebras}
 on $S^1$ (resp. $S^1\setminus\{-1\}$). 
\end{definition}
Now we give a more general definition to cover the case where there is no global 
supercharge operator. In this context the nets will be on the double cover of 
$S^1$, \cite[Sect. 3.2]{CKL}. 
Denote by $\I^{(n)}$ the intervals on the $n$-cover $\Sn$ of $S^1$, namely 
$I\in\I^{(n)}$ if $I$ is a connected subset of $\Sn$ whose projection onto the base 
$S^1$ belongs to $\I$.
\begin{definition}\label{def:netQalgebra2}
A {\em net of generalised quantum algebras} $(\gA , \H, \delta)$ 
on $\Sn$ (resp. $S^1\setminus\{-1\}$) consists of a graded Hilbert space $\H$, of a 
net of unital *-algebras on $\Sn$ (resp. $S^1\setminus\{-1\}$) acting on $\H$ and a 
net $\delta$ of superderivations on 
$\gA$, i.e. a map $I\in \I \mapsto \delta_I$ (resp. $I\in \I_0 \mapsto 
\delta_I$), where $\delta_I:\gA(I) \mapsto \gA(I)$ is a superderivation, 
satisfying $\delta_{I_2}|_{\gA(I_1)} = \delta_{I_1}$ if
$I_1\subset I_2$, such that $(\gA(I) , \H, \delta_I)$ is a generalised quantum 
algebra for every $I\in \I$ (resp. $I \in \I_0$) with Hamiltonian $H$ independent 
of $I$. 
\end{definition}
Note that a net on $\Sn$ gives rise to a net on $S^1\setminus\{-1\}$ by restriction. Conversely, if rotation covariance holds true, a net on $S^1\setminus\{-1\}$ extends to a net on $\Sn$ for some finite or infinite $n$.

\section{Spectral triples from the Ramond algebra}\label{Sect:R}
In this section we shall construct nets of quantum algebras from 
representations of the Ramond (Super-Virasoro) algebra. 

Recall that the Ramond algebra is the super-Lie algebras generated by even 
elements $L_n$, $n\in\mathbb Z$, odd elements $G_r$, $r \in \ZZ$, and a central 
even element $k$ satisfying the relations
\begin{equation}
\begin{gathered}\label{svirdef}
    [L_m , L_n] = (m-n)L_{m+n} + \frac{k}{12}(m^3 - m)\de_{m+n, 0},\\
    [L_m, G_r] = (\frac{m}{2} - r)G_{m+r},\\
    [G_r, G_s] = 2L_{r+s} + \frac{k}{3}(r^2 - \frac14)\de_{r+s,0}.
    \end{gathered}
\end{equation}
We shall consider representations $\lambda$ of the Ramond algebra by linear 
endomorphisms, denoted by $L^\l_m, G^\l_r, k^\l$, $m, r \in \ZZ$, of a complex 
vector 
space $V_\lambda$ equipped with an involutive linear endomorphism $\Gamma_\l$ 
inducing the 
super-Lie algebra grading. The endomorphisms $L^\l_m, G^\l_r, k^\l$, satisfies 
the relations \ref{svirdef} with respect to the brackets given by the super-commutator 
induced by $\Gamma_\l$. 

We assume that the representation $\l$ satisfies the following properties. 
\medskip

\noindent $(i)$ $k^\l = c 1$ for some $c\in \CC$ (the {\em central charge} of 
the representation $\l$). 

\noindent $(ii)$ $L^\l_0$ is diagonalizable on $V_\l$, namely 
\begin{equation}
V_\l =\bigoplus_{\alpha \in \CC} {\rm Ker}(L^\l_0 - \alpha 1)
\end{equation}

\noindent $(iii)$ ${\rm Ker}(L^\l_0 - \alpha 1)$ is finite-dimensional for all 
$\alpha \in \CC$. 
\medskip

\noindent $(iv)$ ({\em Unitarity}) There is a scalar product $(\cdot,\cdot)$ 
on $V_\l$ such that 
\begin{eqnarray*}
(L^\l_m u, v) = (u,L^\l_{-m}v), \\
(G^\l_r u, v) = (u,G^\l_{-r}v), \\
(\Gamma_\l u, v) = (u,\Gamma_\l v),
\end{eqnarray*}
for all $u, v \in V_\l$ and all $m, r \in \ZZ$. 
\medskip 

As a consequence of the above assumptions the central charge $c$ is
a real number and 
\[
{\rm Ker}(L^\l_0 - \alpha 1)= \{0\}
\]
if $\alpha$ is not a 
real number such that $\alpha \geq c/24$. Since the eigenvalues of 
$L^\l_0$ are real numbers bounded from below it follows from the (unitary) 
representation theory of the Virasoro algebra that $c\geq 0$ and hence $\l$ is 
a {\em positive energy representation}.
Accordingly,  the possible values of the central charge are either 
$c\geq 3/2$  or
\begin{equation}\label{c-values}
c = \frac32 \left( 1 - \frac{8}{m(m+2)}\right), \ m=2,3,\ldots
\end{equation}
see \cite{FQS}. 

Note that the graded unitary lowest weight representations of the Ramond algebra 
satisfy all the above assumptions.   

Now let $\l$ be a representation of the Ramond algebra satisfying assumptions 
$(i)$--$(iv)$ and let $\H_\l$ be the Hilbert space completion of $V_\l$. 
The endomorphisms $L_m^\l, G_r^\l$, $m, r \in \ZZ$, define unbounded operators
on $\H_\l$ with domain $V_\l$, which are closable since by assumption $(iv)$
they have densely defined adjoint. We shall denote their closure 
by the same symbols. With this convention $L^\l_0$ is a selfadjoint 
operator on $\H_\l$. Moreover $\Gamma_\l$ extends to a selfadjoint unitary 
operator on $\H_\l$ which will also be denoted by the same symbol.  

The operators $L^\l_m, G^\l_r$,
$m, r \in {\mathbb Z}$ satisfy the {\it energy bounds}  
\begin{equation}
\label{e-boundsL}
\|L^\l_m v\|\leq M (1+|m|^{\frac{3}{2}})\|(1+L^\l_0)v\|, \quad v\in V_\l, 
\end{equation}
for a suitable constant $M>0$ depending on the central charge $c$ and 

\begin{equation}
\label{e-boundsG}
\|G^\l_r v\|\leq (2+ \frac{c}{3}r^2)^{\frac12}\|(1+L^\l_0)^{\frac12}v\|, \quad 
v\in V_\l,
\end{equation}
see \cite[Sect. 6.3]{CKL} and the references therein.

Now let $f$ be a smooth function on $S^1$. It follows from the linear 
energy bounds in Equations (\ref{e-boundsL}), (\ref{e-boundsG}) and the fact 
that the Fourier 
coefficients
\begin{equation}
\hat{f}_n=\int_{-\pi}^\pi f(e^{i\theta})e^{-in\theta}\frac{{\rm d}\theta}{2\pi},\;
n\in \ZZ,
\end{equation}
are rapidly decreasing, that the maps 
\begin{equation}
V_\l \ni v \mapsto  \sum_{n \in \ZZ}\hat{f}_nL^\l_nv 
\end{equation}

\begin{equation}
V_\l \ni v \mapsto \sum_{r \in \ZZ }\hat{f}_rG^\l_r v
\end{equation}
define closable operators on $\H_\l$ and we shall denote by 
$L^\l(f)$ and $G^\l(f)$ ({\it smeared fields}) respectively 
the corresponding closures. 
Their domains contain $D(L^\l_0)$ and they leave invariant 
$C^\infty(L^\l_0)$. Moreover, if $f$ is real, $L^\l(f)$ and $G^\l(f)$ are 
selfadjoint operators which are essentially selfadjoint on any core for 
$L^\l_0$, cf. \cite{BS-M}. 

Using these smeared fields we shall define a net of von Neumann algebras in 
the usual way. Let $\I$ as in Section \ref{SectSpectralTriples}. We define
a net ${\A_\l}$ of von Neumann algebras on $S^1$ by 
\begin{equation}
\label{gener1} 
\A_\l(I) \equiv \{ e^{iL^\l(f)}, e^{iG^\l(f)}:f\in C^{\infty}(S^1) 
\;{\rm real},\, {\rm supp}f \subset I\}'', \; I \in \I. 
\end{equation} 
It is clear from the definition that isotony is satisfied, namely
\begin{equation}
\label{isotony}
\A_\l(I_1) \subset \A_\l(I_2) \quad {\rm if}\; I_1 \subset I_2. 
\end{equation}
In the same way, we see that $L^\l(f)$ and $G^\l(f)$ are affiliated with $\A_\l(I)$ if ${\rm supp}f \subset I$. Moreover each algebra $\A_\l(I)$, $I\in \I$, is left globally invariant
by the grading automorphism $\gamma_\l \equiv {\rm Ad} \Gamma_\l$ of 
$B(\H_\l)$. 

Let $\Diff(S^1)$ be the group of (smooth) orientation preserving 
diffeomorphisms of $S^1$ and let $\Diff^{(\infty)}(S^1)$ be its universal 
cover. Moreover let $\Diff_I(S^1) \subset \Diff(S^1)$ be 
the subgroup of diffeomorphisms that are localised in $I$, namely
that act trivially on $I'$, and let $\Diff_I^{(\infty)}(S^1)$  be the 
connected component of the identity of the pre-image of 
$\Diff_I(S^1)$ in $\Diff^{(\infty)}(S^1)$. 
We denote by $u_\l: \Diff^\infty(S^1) \mapsto U(\H_\l)/{\mathbb T}$ the strongly 
continuous projective unitary positive energy representation obtained by 
integrating the restriction of the representation $\l$ to the Virasoro Lie 
subalgebra of the Ramond algebra, see \cite{GoWa,Tol99} and for every 
$g \in \Diff^\infty(S^1)$ we choose a unitary operator $U_\l(g)$ 
in the equivalence class $u_\l(g) \in U(\H_\l)/{\mathbb T}$. 
If $g\in \Diff_I^{(\infty)}(S^1)$ one can show that 
$U_\l(g) \in \A_\l (I)$, see the proof of \cite[Theorem 33]{CKL}. 
Note that if $\l$ is a lowest weight representation with lowest 
weight $h_\l$ then $e^{i2\pi L^\l_0}=e^{i2\pi h_\l}$ and the projective 
unitary representation $u_\l$ factors through $\Diff(S^1)$.  

Arguing as in \cite[Sect. 6.3]{CKL} where a similar
construction has been carried out for the vacuum representations of the
Neveu-Schwarz (super-Virasoro) algebra we obtain the following theorem.  
We shall omit the details of the proof which in the case of the Ramond algebra 
are similar but in fact simpler.

\begin{theorem} The net $\A_\l$ satisfies the following properties: 
\begin{itemize}
\item[$(i)$] (Graded locality) If $I_1, I_2 \in \I$ and 
$I_1 \cap I_2 = \emptyset$ then 
$ \A_\l(I_1) \subset Z_\l\A_\l(I_2)'Z_\l^*,$
where $Z_\l \equiv (1-i\Gamma_\l)/(1-i)$. 
\item[$(ii)$] (Conformal covariance) 
$U_\l(g)\A_\l(I)U_\l(g)^*= \A_\l(\dot{g}I),$
for all $g\in \Diff^{(\infty)}(S^1)$ and all $I\in \I$,
where $\dot{g}$ denotes the image of $g$ in $\Diff(S^1)$ under the covering 
map. 
\end{itemize}
\end{theorem}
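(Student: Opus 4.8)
The plan is to establish graded locality and conformal covariance by reducing both statements to the level of the smeared fields $L^\l(f)$, $G^\l(f)$ and their exponentials, exploiting that these are affiliated with the local algebras $\A_\l(I)$ and that the super-Virasoro commutation relations \eqref{svirdef} are local in the sense that fields smeared in disjoint intervals have the expected (anti)commutation behaviour. Concretely, for part $(i)$ I would first observe that, using the energy bounds \eqref{e-boundsL} and \eqref{e-boundsG} together with the essential selfadjointness of $L^\l(f)$, $G^\l(f)$ on cores for $L^\l_0$, one can differentiate the relations \eqref{svirdef} to obtain, for $f_1$ supported in $I_1$ and $f_2$ supported in $I_2$ with $I_1\cap I_2=\emptyset$, that $[L^\l(f_1),L^\l(f_2)]=0$, $[L^\l(f_1),G^\l(f_2)]=0$ and $\{G^\l(f_1),G^\l(f_2)\}=0$ on $C^\infty(L^\l_0)$ (the central and $L$-terms vanish since their supports are disjoint). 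From these commutator/anticommutator relations on a common core, standard arguments (as in \cite{CKL}, \cite{BS-M}) upgrade to the statement that the corresponding one-parameter unitary groups $e^{iL^\l(f_1)}$, $e^{iG^\l(f_1)}$ graded-commute with $e^{iL^\l(f_2)}$, $e^{iG^\l(f_2)}$, which is exactly the assertion that $\A_\l(I_1)\subset Z_\l\A_\l(I_2)'Z_\l^*$ once one recalls that conjugation by $Z_\l=(1-i\Gamma_\l)/(1-i)$ implements the Klein transformation converting graded commutation into ordinary commutation.

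For part $(ii)$, conformal covariance, I would use the covariance of the fields under $\Diff^{(\infty)}(S^1)$: the representation $u_\l$ integrating the Virasoro subalgebra satisfies $U_\l(g)L^\l(f)U_\l(g)^* = L^\l(g_*f)$ and $U_\l(g)G^\l(f)U_\l(g)^* = G^\l(g_*f')$ for an appropriate action of $g$ on functions/densities (this is the content of the transformation law of the stress-energy tensor under diffeomorphisms, with the $G$-field transforming as a $3/2$-density; the projective cocycle drops out under the adjoint action). Since $\dot g$ maps $\mathrm{supp}f\subset I$ into $\dot g I$, conjugation by $U_\l(g)$ sends the generators of $\A_\l(I)$ precisely to the generators of $\A_\l(\dot g I)$, giving $U_\l(g)\A_\l(I)U_\l(g)^*=\A_\l(\dot g I)$. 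One inclusion is immediate; the reverse follows by applying the same argument to $g^{-1}$.

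The main obstacle, as the authors anticipate, is not conceptual novelty but the careful handling of unbounded operators: one must justify passing from relations valid on the core $C^\infty(L^\l_0)$ to relations among the bounded exponentials and hence among the generated von Neumann algebras. The key technical input is that $L^\l(f)$ and $G^\l(f)$ are essentially selfadjoint on any core for $L^\l_0$ (invoked above via \cite{BS-M}), which lets one use the commutator theorem / analytic vector arguments to control $e^{iL^\l(f)}$ and $e^{iG^\l(f)}$; this is precisely the point where the linear energy bounds \eqref{e-boundsL}, \eqref{e-boundsG} are essential. Since the analogous construction for the Neveu-Schwarz vacuum representation is carried out in detail in \cite[Sect.\ 6.3]{CKL}, and the Ramond case is structurally the same — indeed simpler, as there is no subtlety about half-integer modes and the representation $u_\l$ already factors appropriately — I would simply adapt that argument, and as the authors say it is legitimate to omit the repetitive details.
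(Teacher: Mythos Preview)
Your proposal is correct and matches the paper's approach exactly: the paper gives no detailed proof but simply refers to \cite[Sect.\ 6.3]{CKL} and notes that the Ramond case is analogous and in fact simpler, which is precisely what you do after sketching the standard argument (local (anti)commutation on $C^\infty(L^\l_0)$ via the energy bounds, upgrade to the exponentials, Klein transformation for graded locality; covariance of the smeared fields under $u_\l$ for part $(ii)$). One minor notational slip: in your covariance formula for $G^\l$ the symbol $g_* f'$ should just denote the push-forward of $f$ as a $-\tfrac12$-density (so that $G$ transforms as a $\tfrac32$-density), not an honest derivative --- but your parenthetical makes clear you have the right transformation law in mind.
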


Note that for any $\l$ (not necessarily a lowest weight representation)
we have 
\begin{equation} 
e^{i2\pi L^\l_0} \in \bigcap_{I\in \I}\A_\l(I)'
\end{equation}
and hence the action of $\Diff^{(\infty)}(S^1)$ on the local algebras 
factors through $\Diff(S^1)$.

We now define a supercharge operator $Q_\l$ on $\H_\l$ by 
$Q_\l \equiv G^\l_0$. $Q_\l$ is an odd operator, namely 
$\Gamma_\l Q_\l\Gamma_\l=-Q_\l$. Moreover it satisfies 
\begin{equation}
Q_\l^2 = H_\l \equiv L^\l_0-\frac{c}{24}. 
\end{equation}

We denote by $\delta$ the corresponding superderivation as defined in Section 
\ref{SectSuperDerivations}. We now define a net of unital *-subalgebras on 
$S^1$ by $\gA_\l(I) \equiv \A_\l(I) \cap C^\infty (\delta)$, $I \in \I$.
Our aim is to show that this net  satisfies $\delta(\gA_\l(I))\subset \gA_\l(I)$ 
and that it is strong operator dense in $\A_\l(I)$ for any $I\in \I$. 
This will give a net of quantum algebras on $S^1$ naturally associated with the 
net of von Neumann algebras $\A_\l$ and hence a noncommutative geometric structure 
\cite{C} associated to superconformal quantum field theories. 
Inspired by the work of Buchholz and 
Grundling \cite{BG} we shall use resolvents of the smeared Bose fields to exhibit 
local elements in the domain of $\delta$. Yet the models considered 
here appear to be more complicated than the free field model considered by them. 

According with the notation in Sect. \ref{SectSuperDerivations} we set, for all 
$a\in B(\H_\l)$ and $f\in C^\infty_c(\RR)$, 
\begin{equation}
a_f \equiv \int_\RR e^{itH_\l}ae^{-itH_\l}f(t){\rm d}t =
\int_\RR e^{itL^\l_0}ae^{-itL^\l_0}f(t){\rm d}t .
\end{equation}

\begin{proposition}\label{prop:domain-invariance}
$\delta(\gA_\l(I))\subset \gA_\l(I)$ for all 
$I\in \I$.
\end{proposition}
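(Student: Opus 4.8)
The plan is to show that $\gA_\l(I)=\A_\l(I)\cap C^\infty(\delta)$ is $\delta$-invariant by exhibiting, for each $a\in\gA_\l(I)$, that $\delta(a)$ again lies in both $\A_\l(I)$ and $C^\infty(\delta)$. The second half is free: $C^\infty(\delta)$ is $\delta$-invariant by construction (noted right before \eqref{normn}), so the entire content is to verify $\delta(a)\in\A_\l(I)$. Since $\A_\l(I)$ is a von Neumann algebra, it suffices to check that $\delta(a)$ commutes with every unitary $u$ in $\A_\l(I)'$; equivalently, writing $\gamma_\l=\Ad\Gamma_\l$, that $u^*\delta(a)u=\delta(a)$ for all such $u$ (one can reduce to the case $u$ even, since $\Gamma_\l\in\A_\l(I)'$ would force a grading compatibility, but more cleanly one works directly).

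The key mechanism is to relate the superderivation $\delta=[Q_\l,\,\cdot\,]$ to the adjoint action of unitaries in $\A_\l(I)'$. First I would observe that for $u\in\A_\l(I)'$ commuting with $\Gamma_\l$ — in particular for the one-parameter groups $e^{itL^\l(g)}$ and $e^{itG^\l(g)}$ with $\mathrm{supp}\,g\subset I'$, which are affiliated to $\A_\l(I')\subset Z_\l\A_\l(I)'Z_\l^*$ up to the grading twist — the supercharge $Q_\l=G^\l_0$ enjoys a commutation relation with $u$ on a common core. Concretely, $G^\l_0$ is the zero mode of the Fermi stress-energy tensor and, by the graded locality of the net together with the energy bounds \eqref{e-boundsL}–\eqref{e-boundsG}, one expects that $Q_\l$ graded-commutes with the generators $G^\l(g)$, $L^\l(g)$ localized in $I'$, at least on $C^\infty(L^\l_0)=C^\infty(H_\l)$. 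From this, for $a\in D(\delta)$ and such a unitary $u$, a computation on $D(H_\l)$ (a core for $Q_\l$, by Lemma \ref{corelemma}) gives $Q_\l(u^*au)\psi-\gamma_\l(u^*au)Q_\l\psi = u^*\bigl(Q_\l a\,(u\psi)-\gamma_\l(a)Q_\l(u\psi)\bigr) = u^*\delta(a)u\,\psi$, so that $u^*au\in D(\delta)$ with $\delta(u^*au)=u^*\delta(a)u$. Applying this with $a\in\gA_\l(I)$: since $a\in\A_\l(I)$ we have $u^*au=a$, hence $\delta(a)=\delta(u^*au)=u^*\delta(a)u$, and therefore $\delta(a)\in\A_\l(I)$.

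The main obstacle is the first step: establishing that $Q_\l=G^\l_0$ graded-commutes (on a suitable core) with the fields $L^\l(g),G^\l(g)$ localized in the complement $I'$, and, relatedly, that conjugation by $u\in\A_\l(I)'$ maps the core $D(H_\l)$ into $D(Q_\l)$ in a controlled way. This is where the energy bounds and graded locality of the net $\A_\l$ (the theorem just stated) do the real work: one smears $G^\l_0$ against an approximate delta supported near a point of $I$, uses the energy bounds to pass to limits on $C^\infty(L^\l_0)$, and invokes graded locality to kill the commutator with fields supported in $I'$. Alternatively — and this may be the cleaner route actually taken — one avoids general $u\in\A_\l(I)'$ and instead shows directly that $\delta$ maps the specific generators of $\A_\l(I)$ (resolvents of smeared Bose fields $L^\l(g)$ and the unitaries $e^{iG^\l(g)}$ with $\mathrm{supp}\,g\subset I$, which by the paper's stated strategy following \cite{BG} are the elements one uses to build $\gA_\l(I)$) back into $\A_\l(I)$, using the super-Virasoro commutation relations \eqref{svirdef} to compute $[G^\l_0,L^\l(g)]$ and $[G^\l_0,G^\l(g)]$ explicitly as again smeared super-Virasoro fields localized in $I$, and then propagating through resolvents and exponentials via the Leibniz rule (Proposition \ref{delta_sProperties}(iii)) and the resolvent formula. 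Once $\delta$ is shown to preserve the generating set and $\A_\l(I)''=\A_\l(I)$, the weak–weak closedness of $\delta$ (Proposition \ref{delta_sProperties}(iv)) upgrades this to $\delta(\A_\l(I)\cap D(\delta))\subset\A_\l(I)$, and intersecting with the $\delta$-invariant algebra $C^\infty(\delta)$ yields $\delta(\gA_\l(I))\subset\gA_\l(I)$.
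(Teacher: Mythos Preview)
Your reduction is correct --- since $C^\infty(\delta)$ is $\delta$-invariant, everything reduces to showing $\delta(a)\in\A_\l(I)$ for $a\in\A_\l(I)\cap D(\delta)$, and this is exactly what the paper proves (invoking Remark~\ref{remarkQalgebra}). But neither of your two proposed routes closes the argument. In your first approach, no Haag duality is assumed, so $\A_\l(I)'$ is not known to be generated by fields smeared in $I'$; and even if you restrict to $u$ coming from $\A_\l(I')$ and manage to show that $Q_\l$ graded-commutes with such $u$ on a core, you would only obtain that $\delta(a)$ graded-commutes with $\A_\l(I')$, hence $\delta(a)\in Z_\l\A_\l(I')'Z_\l^*$ --- not $\delta(a)\in\A_\l(I)$. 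In your second approach the last step fails: knowing that $\delta$ sends a generating family into $\A_\l(I)$ and is weak--weak closed does \emph{not} yield $\delta(\A_\l(I)\cap D(\delta))\subset\A_\l(I)$, because an arbitrary $a\in\A_\l(I)\cap D(\delta)$ need not be a weak limit of generators along which $\delta$ also converges.

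The paper's mechanism is different from both and supplies the missing idea. One enlarges to intervals $\overline{I}\subset I_1$, $\overline{I_1}\subset I_2$, and mollifies $a$ to $a_f$ so that $a_f\in\A_\l(I_1)$ and $a_f$ preserves $C^\infty(L^\l_0)$. The key step is a smooth partition of unity $\varphi_1+\varphi_2=1$ with $\mathrm{supp}\,\varphi_1\subset I_2$ and $\mathrm{supp}\,\varphi_2\subset I_1'$, splitting $Q_\l=G^\l(1)=G^\l(\varphi_1)+G^\l(\varphi_2)$. Graded locality then kills the $G^\l(\varphi_2)$ contribution to $\delta(a_f)$ on $C^\infty(L^\l_0)$, leaving $\delta(a)_f\psi=G^\l(\varphi_1)a_f\psi-\gamma_\l(a_f)G^\l(\varphi_1)\psi$, an expression built from $G^\l(\varphi_1)$ (affiliated with $\A_\l(I_2)$) and $a_f\in\A_\l(I_2)$. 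Hence $\delta(a)_f$ commutes with every $b\in\A_\l(I_2)'$, so $\delta(a)_f\in\A_\l(I_2)$; letting the mollifier shrink gives $\delta(a)\in\A_\l(I_2)$, and conformal covariance (outer regularity $\bigcap_{I_2\supset\overline{I}}\A_\l(I_2)=\A_\l(I)$) finishes. The partition-of-unity decomposition of $Q_\l$, the mollification of $a$, and the passage through the enlarged interval are precisely the ingredients your sketch is missing.
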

\begin{proof} Fix an arbitrary interval $I\in \I$ and an arbitrary $a\in 
\A_\l(I)\cap D(\delta)$. 
As a consequence of Remark \ref{remarkQalgebra} it is enough to show that 
$\delta(a) \in \A_\l(I)$. Let $I_1, I_2 \in \I$ be such that the
closure $\overline{I}$ of $I$ is contained in $I_1$ and 
$\overline{I_1} \subset I_2$. 
Then, if the support of $f\in C^\infty_c(\RR)$ is sufficiently close to $0$,
$a_f \in \A_\l(I_1).$ By Lemma \ref{lemmaA_f}, $a_f\in D(\delta)$
and $\delta(a_f) =\delta(a)_f$. Moreover, a standard argument shows that 
$a_f C^\infty(L^\l_0) \subset C^\infty(L^\l_0)$. 

Now let $\varphi_1$ and $\varphi_2$ be two real nonnegative smooth functions on 
$S^1$ such that ${\rm supp}\varphi_1 \subset I_2$, 
${\rm supp}\varphi_2 \subset I_1'$ and $\varphi_1+\varphi_2=1$ and let 
$\psi \in C^\infty(L^\l_0)$. 
Then, 
\begin{eqnarray*}
\delta(a)_f\psi  &=& \delta(a_f)\psi = 
Q_\l a_f \psi -\gamma_\l(a_f)Q_\l \psi \\
&=& G^\l(\varphi_1)a_f\psi + G^\l(\varphi_2)a_f\psi - 
\gamma_\l(a_f)G^\l(\varphi_1)\psi - \gamma_\l(a_f)G^\l(\varphi_2)\psi.
\end{eqnarray*}
Since $G^\l(\varphi_2)$ is affiliated with 
$\A_\l(I_1') \subset Z_\l\A_\l(I_1)'Z_\l^*$ (using graded locality), we have 
$$G^\l(\varphi_2)a_f\psi -\gamma_\l(a_f)G^\l(\varphi_2)\psi=0.$$ 
Hence, 
$$\delta(a)_f\psi=  G^\l(\varphi_1)a_f\psi -
\gamma_\l(a_f)G^\l(\varphi_1)\psi.$$  
Then, given an arbitrary $b\in \A_\l(I_2)'$, we have
\begin{eqnarray*} 
b\delta(a)_f\psi &=&  bG^\l(\varphi_1)a_f\psi -
b\gamma_\l(a_f)G^\l(\varphi_1)\psi \\
&=& 
G^\l(\varphi_1)a_fb\psi -\gamma_\l(a_f)G^\l(\varphi_1)b\psi.
\end{eqnarray*}
Since $C^\infty(L^\l_0)$ is a core for $G^\l(\varphi_1)$ we can find 
a sequence $\psi_n \in C^\infty(L^\l_0)$ such that 
$\psi_n$ tends to $b\psi$ and $G^\l(\varphi_1)\psi_n$ tends to 
$G^\l(\varphi_1)b\psi$ as $n$ tends to $\infty$. 
Then   
\begin{eqnarray*}
\lim_{n\to \infty} G^\l(\varphi_1)a_f\psi_n 
&=& \lim_{n\to \infty} \left(\delta(a)_f\psi_n + 
\gamma_\l(a_f)G^\l(\varphi_1)\psi_n \right) \\
&=& \delta(a)_fb\psi + \gamma_\l(a_f)G^\l(\varphi_1)b\psi
\end{eqnarray*} 
and since $G^\l(\varphi_1)$ is a closed operator it follows that  
$$G^\l(\varphi_1)a_fb\psi= 
\delta(a)_fb\psi + \gamma_\l(a_f)G^\l(\varphi_1)b\psi.$$ 
Hence
\begin{eqnarray*}
b\delta(a)_f\psi &=& G^\l(\varphi_1)a_fb\psi 
-\gamma_\l(a_f)G^\l(\varphi_1)b\psi \\
&=& \delta(a)_fb\psi + \gamma_\l(a_f)G^\l(\varphi_1)b\psi
-\lim_{n \to \infty}\gamma_\l(a_f)G^\l(\varphi_1)\psi_n  \\
&=&\delta(a)_fb\psi.
\end{eqnarray*}
It follows that $\delta(a)_f \in \A_\l(I_2)$ for every smooth function 
$f$ on $\RR$ with support sufficiently close to $0$. Hence, 
$\delta(a) \in \A_\l(I_2)$ and since $I_2$ can be any interval in $\I$ 
containing the closure of $I$, 
$$\delta(a) \in \bigcap_{I_0 \supset \overline{I}} \A_\l(I_0).$$
The conclusion follows since the latter intersection of von Neumann 
algebras coincides with $\A_\l(I)$ as a consequence of conformal 
covariance. \end{proof}

To show that ${\gA}_\l(I)$ is strong operator dense in 
${\A}_\l(I)$ for all $I \in \I$ we need some preliminary results. 

\begin{proposition}
\label{resolvent}
For every $k \in \NN$ and every real 
$f \in C^\infty(S^1)$ there exists a real number $M>0$ such that, for 
every $\alpha \in \RR$ satisfying $|\alpha|>M$ the following holds
$$(L^\l(f)+i\alpha)^{-1}D((L^\l_0)^k) \subset D((L^\l_0)^k).$$
\end{proposition}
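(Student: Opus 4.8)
The plan is to show that $(L^\l(f)+i\alpha)^{-1}$ maps $D((L^\l_0)^k)$ into itself by controlling the commutator of $L^\l(f)$ with powers of $L^\l_0$. First I would observe that, since $f$ is smooth, $L^\l(f)$ leaves $C^\infty(L^\l_0)$ invariant, and that the relevant input is the commutation relation $[L^\l_0, L^\l(f)] = L^\l(g)$ on $C^\infty(L^\l_0)$, where $g$ is again a smooth function on $S^1$ (coming from the derivative of $f$, via $[L_0,L_n]=-nL_n$ and summing against the rapidly decreasing Fourier coefficients). More generally, $\ad(L^\l_0)^j$ applied to $L^\l(f)$ yields a smeared field $L^\l(g_j)$ for suitable smooth $g_j$, all of which are relatively bounded with respect to $(1+L^\l_0)$ by the energy bounds \eqref{e-boundsL}. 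This is the structural fact that makes everything go.

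Next I would set $R_\alpha \equiv (L^\l(f)+i\alpha)^{-1}$ and write, formally on a suitable dense domain, the identity
\begin{equation*}
(1+L^\l_0)^k R_\alpha = R_\alpha (1+L^\l_0)^k + \big[(1+L^\l_0)^k, R_\alpha\big],
\end{equation*}
and expand the commutator using $[A,R_\alpha] = -R_\alpha[A, L^\l(f)]R_\alpha$ together with the Leibniz rule for $[(1+L^\l_0)^k,\,\cdot\,]$. Iterating, one gets a finite sum of terms of the form
\begin{equation*}
R_\alpha\, L^\l(g_{j_1})\, R_\alpha\, L^\l(g_{j_2}) \cdots R_\alpha\, L^\l(g_{j_\ell})\, R_\alpha\, (1+L^\l_0)^{k-m}
\end{equation*}
with $m\le k$. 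Each $L^\l(g_j)$ is dominated by $(1+L^\l_0)$, so one needs bounds on expressions $(1+L^\l_0)R_\alpha$ and $R_\alpha(1+L^\l_0)$. Here I would use that, for real $f$, $L^\l(f)$ is selfadjoint so $\|R_\alpha\|\le |\alpha|^{-1}$, and more importantly that $L^\l(f)$ has linear energy bounds, which give $\|(1+L^\l_0)R_\alpha\| \le C_1 + C_2|\alpha|^{-1}$ or similar, uniformly once $|\alpha|>M$; the factor $(1+L^\l_0)^{k-m}$ at the far right is exactly absorbed because $\psi\in D((L^\l_0)^k)$. Running the induction on $k$, one concludes $R_\alpha\psi \in D((L^\l_0)^k)$.

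The main obstacle is making the above commutator manipulations rigorous rather than merely formal: $R_\alpha$ does not obviously preserve the relevant domains a priori, which is precisely what we are trying to prove, so one cannot freely commute unbounded operators. I would handle this by induction on $k$: assuming $R_\alpha$ preserves $D((L^\l_0)^{k-1})$ (the base case $k=0$ being trivial), work on the core $C^\infty(L^\l_0)$, where all operators are defined and the algebraic identities are legitimate, then use that $(1+L^\l_0)^k$ is closed together with the uniform bounds just described to pass from the core to all of $D((L^\l_0)^k)$. A clean way to organise this is to first prove the single-commutator statement $(1+L^\l_0)R_\alpha - R_\alpha(1+L^\l_0) = -R_\alpha L^\l(g) R_\alpha$ as an identity of bounded operators (both sides make sense after the energy-bound estimates), and then bootstrap to higher $k$ by repeatedly applying this together with the inductive hypothesis; the threshold $M$ simply needs to be chosen large enough that all the geometric-series-type estimates on the iterated resolvent expansion converge, and it depends only on $k$ and $f$ as claimed.
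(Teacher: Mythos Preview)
Your commutator-expansion strategy is viable in principle, but the paper takes a genuinely different and much shorter route that sidesteps exactly the domain circularity you flag as the main obstacle. Rather than manipulating $(1+L^\l_0)^k R_\alpha$ algebraically, the paper equips $D((L^\l_0)^k)$ with the graph inner product of $(1+L^\l_0)^k$, turning it into a Hilbert space $\H^k$, and invokes results of Toledano Laredo \cite{Tol99} to the effect that the unitary group $e^{itL^\l(f)}$ restricts to a strongly continuous group on $\H^k$ with $\|e^{itL^\l(f)}\|_{B(\H^k)}\le e^{|t|M}$. The resolvent is then written as a Laplace transform,
\[
(L^\l(f)+i\alpha)^{-1}\psi = -i\int_0^\infty e^{itL^\l(f)}e^{-t\alpha}\psi\,{\rm d}t,
\]
which for $\alpha>M$ converges in the $\H^k$-norm, so the result lies in $D((L^\l_0)^k)$. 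No commutator bookkeeping, no induction on $k$, no circularity: unitaries are globally defined and the analytic work is absorbed into the cited reference.

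Your method has the merit of being more self-contained (only the relation $[L^\l_0,L^\l(f)]=L^\l(g)$ and the linear energy bounds enter), but the resolution you sketch does not quite close the circle. Saying ``work on the core $C^\infty(L^\l_0)$'' does not help, because $R_\alpha$ is not known to preserve $C^\infty(L^\l_0)$ a priori, so the expression $(1+L^\l_0)^kR_\alpha\psi$ is undefined even for $\psi$ in the core; and the boundedness of $R_\alpha L^\l(g)R_\alpha$ already presupposes that $(1+L^\l_0)R_\alpha$ is bounded, which is the $k=1$ case of the claim. To make your approach rigorous one would typically either regularise (replace $L^\l_0$ by a bounded spectral cutoff, compute the commutator identity between bounded operators, then pass to the limit) or set up a Neumann series directly for the bounded operator $(1-R_\alpha B)^{-1}R_\alpha$ with $B=L^\l(g)(1+L^\l_0)^{-1}$ and verify a posteriori that it agrees with $(1+L^\l_0)R_\alpha(1+L^\l_0)^{-1}$. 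Either fix works, but it is more laborious than the paper's integral-formula argument.
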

\begin{proof} Let $(\cdot,\cdot)_k$ be the scalar product 
on $D((L^\l_0)^k)$ given by 
$$(\psi_1,\psi_2)_k \equiv ((L^\l_0+1)^k\psi_1,(L^\l_0+1)^k\psi_2).$$ 
With this scalar product $D((L^\l_0)^k)$ is a Hilbert space which we shall 
denote by $\H^k$. Let $\|\cdot \|_k$ be the corresponding norm. 
By \cite[Proposition 2.1]{Tol99}, $e^{itL^\l(f)}$, $t\in \RR$, restricts 
to bounded linear maps $\H^k \to \H^k$ satisfying
$$\|e^{itL^\l(f)}\|_{B(\H^k)}\leq e^{|t|M},$$
for suitable constant $M>0$ (depending on $f$ and $k$). 
Moreover, it follows from \cite[Corollary 2.3]{Tol99}
(see also \cite[Lemma 3.1.1]{Tol99}) that the map 
$t \to e^{itL^\l(f)} \in B(\H^k)$ is strongly continuous. 

Now let $\alpha >M$ and let $\psi \in D((L^\l_0)^k)$ then
on $\H_\l$ we have the equality 
$$(L^\l(f)+i\alpha)^{-1}\psi = 
-i\int_0^{\infty}e^{itL^\l(f)}e^{-t\alpha}\psi {\rm d}t.$$ 
The map $t\to e^{itL^\l(f)}e^{-t\alpha}\psi \in \H^k$ is 
continuous and 
\begin{equation*}
\int_0^{\infty}\|e^{itL^\l(f)}e^{-t\alpha}\psi\|_k {\rm d}t 
\leq \|\psi\|_k \int_0^{\infty}e^{(M-\alpha)t}{\rm d}t 
<\infty.
\end{equation*} 
Hence 
$$(L^\l(f)+i\alpha)^{-1}\psi \in \H^k = D((L^\l_0)^k).$$
A similar argument shows that $(L^\l(f)+i\alpha)^{-1}\psi \in 
D((L^\l_0)^k)$ also if $\alpha < -M$ completing the proof. 
\end{proof}
In the following, for every differentiable function $f$ on $S^1$, we shall 
denote by $f'$ the function on $S^1$ defined by 
$f'(e^{i\theta})=\frac{{\rm d}}{{\rm d}\theta}f(e^{i\theta})$. Moreover 
if $f$ is any integrable function on $S^1$ we shall use the notation  
$\int_{S^1}f$ for the integral 
$\int_{-\pi}^{\pi} f(e^{i\theta})d\theta .$ 
\begin{lemma} 
\label{supercommutatorsLemma}
Let $\psi$ be a vector in the domain of $(L^\l_0)^2$ and let 
$f$ be a real smooth function on $S^1$. Then the following hold: 
\begin{itemize}
\item[$(i)$] $L^\l(f)\psi \in D(Q_\l)$ and 
$$Q_\l L^\l(f)\psi= L^\l(f) Q_\l\psi + \frac{i}2 G^\l(f')\psi.$$ 

\item[(ii)] $G^\l(f)\psi \in D(Q_\l)$ and
$$Q_\l G^\l(f)\psi= -G^\l(f) Q_\l\psi + 
2 L^\l(f)\psi -\frac{c}{24\pi}\left(\int_{S^1}f \right)\psi.$$

\item[$(iii)$] $G^\l(f)\psi \in D(G^\l(f))$ and 
$$G^\l(f)^2\psi= 
L^\l(f^2)\psi + \frac{c}{12\pi}\left(\int_{S^1}(f'^2-\frac14 f^2)\right) 
\psi.$$  
\end{itemize}
\end{lemma}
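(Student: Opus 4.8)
The plan is to reduce the three identities to the Ramond commutation relations \eqref{svirdef} (plus a little Fourier bookkeeping) on a core, and then to extend them to all $\psi\in D((L^\l_0)^2)$ by approximation. Concretely: (a) prove each identity on the invariant core $V_\l$, on which every smeared field is an everywhere-defined endomorphism and the mode relations hold by definition of $\l$; (b) pass to $D((L^\l_0)^2)$ using the energy bounds \eqref{e-boundsL}, \eqref{e-boundsG} together with closedness of the operators involved. Step (a) is routine algebra; step (b) is where the care is needed.

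For (a), recall that if $f(e^{i\theta})=\sum_n\hat f_n e^{in\theta}$ then $\widehat{f'}_n=in\hat f_n$, $\widehat{f^2}_n=\sum_{r+s=n}\hat f_r\hat f_s$ and $\hat f_0=\tfrac1{2\pi}\int_{S^1}f$, whence also $\sum_r\hat f_r\hat f_{-r}=\widehat{f^2}_0=\tfrac1{2\pi}\int_{S^1}f^2$ and, applying this to $f'$, $\sum_r r^2\hat f_r\hat f_{-r}=\sum_r\widehat{f'}_r\widehat{f'}_{-r}=\tfrac1{2\pi}\int_{S^1}(f')^2$. On $V_\l$ each product of smeared fields below is a (possibly double) Fourier series in the modes converging in the graph norm of a sufficiently high power of $L^\l_0$, so it may be reorganised and evaluated term by term, the outermost field being pushed through the series by closedness. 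One then obtains: for (i), from $[G^\l_0,L^\l_n]=-\tfrac n2 G^\l_n$ and $G^\l(f')=i\sum_n n\hat f_n G^\l_n$, that $Q_\l L^\l(f)\psi-L^\l(f)Q_\l\psi=\sum_n\hat f_n[G^\l_0,L^\l_n]\psi=-\tfrac12\sum_n n\hat f_n G^\l_n\psi=\tfrac i2 G^\l(f')\psi$; for (ii), from $G^\l_0G^\l_r+G^\l_rG^\l_0=2L^\l_r+\tfrac c3(r^2-\tfrac14)\delta_{r,0}$, that $Q_\l G^\l(f)\psi+G^\l(f)Q_\l\psi=\sum_r\hat f_r\big(2L^\l_r+\tfrac c3(r^2-\tfrac14)\delta_{r,0}\big)\psi=2L^\l(f)\psi-\tfrac c{12}\hat f_0\psi$ with $\tfrac c{12}\hat f_0=\tfrac c{24\pi}\int_{S^1}f$; for (iii), since $G^\l_rG^\l_s-G^\l_sG^\l_r$ is antisymmetric in $(r,s)$ while $\hat f_r\hat f_s$ is symmetric, that $G^\l(f)^2\psi=\tfrac12\sum_{r,s}\hat f_r\hat f_s\big(G^\l_rG^\l_s+G^\l_sG^\l_r\big)\psi=\tfrac12\sum_{r,s}\hat f_r\hat f_s\big(2L^\l_{r+s}+\tfrac c3(r^2-\tfrac14)\delta_{r+s,0}\big)\psi$, where the first term collapses by convolution of Fourier coefficients to $\sum_n\widehat{f^2}_n L^\l_n\psi=L^\l(f^2)\psi$ and the second, by the Fourier identities above, to $\tfrac c6\big(\sum_r r^2\hat f_r\hat f_{-r}-\tfrac14\sum_r\hat f_r\hat f_{-r}\big)\psi=\tfrac c{12\pi}\big(\int_{S^1}((f')^2-\tfrac14 f^2)\big)\psi$.

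For (b), the key point is that all the operators appearing in (i)--(iii) — in particular the compositions $Q_\l L^\l(f)$, $L^\l(f)Q_\l$, $Q_\l G^\l(f)$, $G^\l(f)Q_\l$ and $G^\l(f)^2$ — are well defined on $D((L^\l_0)^2)$ and bounded from the graph norm of $(1+L^\l_0)^2$ into $\H_\l$. This follows by iterating the energy bounds \eqref{e-boundsL}, \eqref{e-boundsG} — which already give $L^\l(f)\colon D(L^\l_0)\to\H_\l$ and $G^\l(f),Q_\l\colon D((L^\l_0)^{1/2})\to\H_\l$ boundedly — together with $[L^\l_0,L^\l(f)]=iL^\l(f')$, $[L^\l_0,G^\l(f)]=iG^\l(f')$ and $[L^\l_0,Q_\l]=0$: for instance from $(1+L^\l_0)L^\l(f)=L^\l(f)(1+L^\l_0)+iL^\l(f')$ one gets $L^\l(f)\colon D((L^\l_0)^2)\to D(L^\l_0)$ boundedly, and likewise $G^\l(f),Q_\l\colon D((L^\l_0)^{3/2})\to D(L^\l_0)$, so that each of the compositions above indeed maps $D((L^\l_0)^2)$ boundedly into $\H_\l$ in the stated graph norm. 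Since $V_\l$ is the span of an orthonormal basis of $L^\l_0$-eigenvectors it is a core for $(L^\l_0)^2$; picking $\psi_n\in V_\l$ with $\psi_n\to\psi$ in the $(1+L^\l_0)^2$-graph norm, applying step (a) to each $\psi_n$, and letting $n\to\infty$ — using closedness of $L^\l(f)$, $G^\l(f)$, $Q_\l$ to identify the limits — yields (i)--(iii) for $\psi$. I expect this last bookkeeping to be the only real obstacle: one has to keep straight, for each expression occurring, which Sobolev-type domain $D((L^\l_0)^k)$ the various smeared fields land in, so that the formal manipulations of step (a) make sense on the core and the limits in step (b) can be taken; the algebra itself is routine.
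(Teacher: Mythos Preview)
Your proposal is correct and follows exactly the route the paper indicates: the paper's proof is the single sentence ``These are rather straightforward consequences of the Ramond algebra relations in Eq.~(\ref{svirdef}) and of the energy bounds in Eq.~(\ref{e-boundsL}) and Eq.~(\ref{e-boundsG}) together with the fact that $V_\l$ is a core for every power of $L^\l_0$'', and your steps (a)--(b) are precisely a fleshed-out version of that sketch. The Fourier bookkeeping and the domain/closedness chasing you describe are the intended details.
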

\begin{proof} These are rather straightforward consequences of 
the Ramond algebra relations in Eq. (\ref{svirdef}) and of the energy 
bounds in Eq. (\ref{e-boundsL}) and Eq. (\ref{e-boundsG}) together with 
the fact that $V_\l$ is a core for every power of $L^\l_0$. \end{proof}
\begin{proposition} 
\label{deltaResolvent1}
Let $f$ be a real smooth function on $S^1$. If 
$\alpha \in \RR$ and  $|\alpha  |$ is sufficiently large then, 
for every $\psi \in D((L^\l_0)^2)$, 
$(L^\l(f)+i\alpha)^{-1}\psi \in D((L^\l_0)^2)$ and  
$$
Q_\l (L^\l(f)+i\alpha)^{-1}\psi = (L^\l(f)+i\alpha)^{-1}Q_\l\psi 
-\frac{i}2 
(L^\l(f)+i\alpha)^{-1}G^\l(f')(L^\l(f)+i\alpha)^{-1}\psi.
$$
\end{proposition}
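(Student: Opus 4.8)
The plan is to set $\phi\equiv(L^\l(f)+i\alpha)^{-1}\psi$ and to derive the identity by applying $Q_\l$ to the tautological equality $(L^\l(f)+i\alpha)\phi=\psi$, using the supercommutation relation of Lemma~\ref{supercommutatorsLemma}$(i)$ and then re-applying the bounded inverse $(L^\l(f)+i\alpha)^{-1}$. First I would fix $M>0$ large enough that Proposition~\ref{resolvent} applies with $k=2$; then for $|\alpha|>M$ one has $\phi\in D((L^\l_0)^2)$, which is exactly the regularity needed to invoke Lemma~\ref{supercommutatorsLemma}$(i)$ at the vector $\phi$. I would also record a domain remark making every term below meaningful: since $Q_\l^2=H_\l=L^\l_0-c/24$ we have $D((L^\l_0)^2)=D(Q_\l^4)$, so $\phi\in D(Q_\l^4)$ gives $Q_\l\phi\in D(Q_\l^3)\subset D(Q_\l^2)=D(L^\l_0)\subset D(L^\l(f))$, and likewise $\phi\in D(L^\l_0)\subset D(G^\l(f'))$, while $\psi\in D((L^\l_0)^2)\subset D(Q_\l)$.

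Next I would carry out the computation. Since $L^\l(f)\phi\in D(Q_\l)$ by Lemma~\ref{supercommutatorsLemma}$(i)$ and $\phi\in D(Q_\l)$, applying $Q_\l$ to $L^\l(f)\phi+i\alpha\phi=\psi$ is legitimate term by term; using $Q_\l L^\l(f)\phi=L^\l(f)Q_\l\phi+\frac{i}{2}G^\l(f')\phi$ and $Q_\l(i\alpha\phi)=i\alpha Q_\l\phi$ it yields
$$Q_\l\psi=(L^\l(f)+i\alpha)Q_\l\phi+\frac{i}{2}G^\l(f')\phi ,$$
where the right-hand side is meaningful because $Q_\l\phi\in D(L^\l(f))$. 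Applying the bounded operator $(L^\l(f)+i\alpha)^{-1}$ to both sides and using $(L^\l(f)+i\alpha)^{-1}(L^\l(f)+i\alpha)Q_\l\phi=Q_\l\phi$ gives
$$(L^\l(f)+i\alpha)^{-1}Q_\l\psi=Q_\l\phi+\frac{i}{2}(L^\l(f)+i\alpha)^{-1}G^\l(f')\phi ;$$
rearranging and substituting $\phi=(L^\l(f)+i\alpha)^{-1}\psi$ produces the asserted formula, while $(L^\l(f)+i\alpha)^{-1}\psi\in D((L^\l_0)^2)$ is just the inclusion $\phi\in D((L^\l_0)^2)$ already established.

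I do not expect a serious obstacle: the statement is essentially Proposition~\ref{resolvent} (keeping $\psi$ in the right domain under the resolvent) combined with Lemma~\ref{supercommutatorsLemma}$(i)$ (the bracket $[Q_\l,L^\l(f)]$), glued by the formal resolvent identity
$$[Q_\l,(L^\l(f)+i\alpha)^{-1}]=-(L^\l(f)+i\alpha)^{-1}[Q_\l,L^\l(f)](L^\l(f)+i\alpha)^{-1}.$$
The only point requiring genuine attention is the domain bookkeeping: one must invoke the $k=2$ case of Proposition~\ref{resolvent} (not merely $k=1$), since Lemma~\ref{supercommutatorsLemma}$(i)$ needs its argument in $D((L^\l_0)^2)$, and one must check that $Q_\l$ preserves enough regularity for $Q_\l\phi$ to lie in $D(L^\l(f))$, which is immediate from $D((L^\l_0)^2)=D(Q_\l^4)$.
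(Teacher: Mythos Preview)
Your proposal is correct and follows essentially the same route as the paper's proof: invoke Proposition~\ref{resolvent} with $k=2$ to place $\phi=(L^\l(f)+i\alpha)^{-1}\psi$ in $D((L^\l_0)^2)$, apply Lemma~\ref{supercommutatorsLemma}$(i)$ at the vector $\phi$ to obtain $Q_\l\psi=(L^\l(f)+i\alpha)Q_\l\phi+\tfrac{i}{2}G^\l(f')\phi$, and then hit both sides with the resolvent. Your explicit domain remark $D((L^\l_0)^2)=D(Q_\l^4)$ justifying $Q_\l\phi\in D(L^\l(f))$ is a welcome addition, though this is already implicit in the statement of Lemma~\ref{supercommutatorsLemma}$(i)$.
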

\begin{proof} By Proposition \ref{resolvent} if $|\alpha |$ 
sufficiently large then $(L^\l(f)+i\alpha)^{-1}\psi \in D((L^\l_0)^2)$, 
for any $\psi \in D((L^\l_0)^2)$. Hence, by 
Lemma \ref{supercommutatorsLemma} $(i)$, 
$L^\l(f)(L^\l(f)+i\alpha)^{-1}\psi \in D(Q_\l)$ and
$$Q_\l L^\l(f)(L^\l(f)+i\alpha)^{-1}\psi= 
L^\l(f) Q_\l(L^\l(f)+i\alpha)^{-1}\psi + 
\frac{i}2 G^\l(f')(L^\l(f)+i\alpha)^{-1}\psi.$$ Adding 
$i\alpha  Q_\l (L^\l(f)+i\alpha)^{-1}\psi$ to both sides of the previous 
equality we find 
$$Q_\l\psi=
(L^\l(f) +i\alpha)Q_\l(L^\l(f)+i\alpha)^{-1}\psi +
\frac{i}2 G^\l(f')(L^\l(f)+i\alpha)^{-1}\psi,$$ 
so that 
$$(L^\l(f) +i\alpha)Q_\l(L^\l(f)+i\alpha)^{-1}\psi= Q_\l \psi 
- \frac{i}2 G^\l(f')(L^\l(f)+i\alpha)^{-1}\psi$$
and the conclusion follows by letting $(L^\l(f)+i\alpha)^{-1}$ 
act to both sides of the latter equality. 
\end{proof}
\begin{proposition}
\label{boundedProduct}
 Let $f_1$ and $f_2$ be real smooth functions on $S^1$ 
and assume that $f_1^2 \leq Cf_2$ for some $C>0$. Then, for any 
nonzero $\alpha \in \RR$ 
$$G^\l(f_1)(L^\l(f_2)+i\alpha)^{-1} \in B(\H_\l).$$
\end{proposition}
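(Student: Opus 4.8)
The plan is to reduce the boundedness of $G^\l(f_1)(L^\l(f_2)+i\alpha)^{-1}$ to the energy bound for $G^\l(f_1)$ in Eq.~(\ref{e-boundsG}) together with the elementary fact that $(1+L^\l_0)^{1/2}(L^\l(f_2)+i\alpha)^{-1}$ is bounded when $f_1^2\le Cf_2$. First I would recall from Eq.~(\ref{e-boundsG}) (summed against the Fourier coefficients of $f_1$, which are rapidly decreasing) that $G^\l(f_1)$ is $(1+L^\l_0)^{1/2}$-bounded, i.e.\ there is a constant $K>0$ with $\|G^\l(f_1)\xi\|\le K\|(1+L^\l_0)^{1/2}\xi\|$ for all $\xi \in C^\infty(L^\l_0)$, and hence for all $\xi\in D((L^\l_0)^{1/2})$ by closure. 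So it suffices to prove that $(L^\l(f_2)+i\alpha)^{-1}$ maps $\H_\l$ into $D((L^\l_0)^{1/2})$ and that the operator $(1+L^\l_0)^{1/2}(L^\l(f_2)+i\alpha)^{-1}$ is bounded on $\H_\l$; then $G^\l(f_1)(L^\l(f_2)+i\alpha)^{-1}$, a priori defined on a dense domain, extends to a bounded operator on all of $\H_\l$.

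For the key estimate I would exploit the hypothesis $f_1^2\le Cf_2$. The point is that this forces $f_2\ge 0$ and controls how much of the ``$L_0$-weight'' of a vector survives application of the resolvent of $L^\l(f_2)$. Concretely, for $\psi\in D((L^\l_0)^{1/2})$ and $\eta=(L^\l(f_2)+i\alpha)^{-1}\psi$ (which lies in $D((L^\l_0)^{1/2})$ by Proposition \ref{resolvent} applied with $k=1$, or rather its half-power analogue — one can instead work with $k=1$ and interpolate, or argue directly on $D(L^\l_0)$ and pass to the closure), one has $\psi=(L^\l(f_2)+i\alpha)\eta$, so
\begin{equation*}
\|\psi\|^2=\|L^\l(f_2)\eta\|^2+\alpha^2\|\eta\|^2+2\alpha\,\mathrm{Im}\,(L^\l(f_2)\eta,\eta).
\end{equation*}
The cross term is controlled by Lemma \ref{supercommutatorsLemma}$(iii)$ with $f=f_2^{1/2}$-type reasoning: writing $f_2=g^2$ with $g$ real smooth (this requires $f_2>0$; in general one handles the zero set by adding $\varepsilon$ and taking a limit, or uses that $f_1^2\le Cf_2$ already gives the needed positivity on the support of $f_1$), one gets $G^\l(g)^2=L^\l(f_2)+(\text{bounded scalar})$, whence $(L^\l(f_2)\eta,\eta)=\|G^\l(g)\eta\|^2+O(\|\eta\|^2)\ge -O(\|\eta\|^2)$ since $f_1^2 \le C f_2$ gives exactly the comparison needed to bound $\|G^\l(f_1)\eta\|$ by $\|G^\l(g)\eta\|$ up to a multiple of $\|\eta\|$. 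Rearranging, $\|G^\l(f_1)\eta\|^2 \le \mathrm{const}\cdot(\|\psi\|^2+\|\eta\|^2)\le \mathrm{const}\cdot\|\psi\|^2$ for $\alpha\ne 0$, which is precisely the desired bound $\|G^\l(f_1)(L^\l(f_2)+i\alpha)^{-1}\psi\|\le \mathrm{const}\cdot\|\psi\|$.

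The main obstacle I anticipate is the technical handling of the square root $f_2=g^2$ when $f_2$ vanishes somewhere: $g$ need not be smooth at zeros of $f_2$, so Lemma \ref{supercommutatorsLemma}$(iii)$ does not apply verbatim. I would circumvent this either by replacing $f_2$ with $f_2+\varepsilon$, proving the bound uniformly in $\varepsilon>0$, and letting $\varepsilon\to 0$ using that $(L^\l(f_2+\varepsilon)+i\alpha)^{-1}\to (L^\l(f_2)+i\alpha)^{-1}$ strongly; or, more cleanly, by directly estimating the quadratic form of $G^\l(f_1)^*G^\l(f_1)$ against that of $L^\l(f_2)$: the inequality $f_1^2\le Cf_2$ should translate, via the energy-bound machinery and the commutation relations, into an operator inequality of the form $G^\l(f_1)^*G^\l(f_1)\le C'(L^\l_0+1)$ together with $L^\l(f_2)+\lambda \ge c'(L^\l_0+1)^{?}$-type lower bounds — but since only a bound relative to the resolvent is needed, the cleanest route is the resolvent identity combined with the $(1+L^\l_0)^{1/2}$-boundedness of $G^\l(f_1)$ and Proposition \ref{resolvent}. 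The remaining steps — summing Fourier series, invoking that $V_\l$ is a core for powers of $L^\l_0$, passing from a dense domain to all of $\H_\l$ by continuity — are routine.
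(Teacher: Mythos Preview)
Your proposal circles around the right computation but misses the key analytic ingredient, and the route through $(1+L^\l_0)^{1/2}$ is a wrong turn.

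The first strategy, reducing to boundedness of $(1+L^\l_0)^{1/2}(L^\l(f_2)+i\alpha)^{-1}$, is not justified and is almost certainly false as stated. The hypothesis $f_1^2\le Cf_2$ forces $f_2\ge 0$ but says nothing about where $f_2$ is strictly positive; $f_2$ may vanish on a large set, and then the local operator $L^\l(f_2)$ has no reason to dominate the global Hamiltonian $L^\l_0$, even at half a power. Proposition~\ref{resolvent} only says the resolvent \emph{preserves} $D((L^\l_0)^k)$ for large $|\alpha|$, not that it maps all of $\H_\l$ into any $L^\l_0$-Sobolev space. This line should be abandoned. (Incidentally, your cross term $2\alpha\,\mathrm{Im}\,(L^\l(f_2)\eta,\eta)$ vanishes, since $L^\l(f_2)$ is symmetric for real $f_2$.)

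Your second strategy is close to the paper's argument, but the detour through $g=\sqrt{f_2}$ is unnecessary and, as you note, obstructed by the non-smoothness of $g$. The clean version is: apply Lemma~\ref{supercommutatorsLemma}$(iii)$ directly with $f=f_1$ (not with $\sqrt{f_2}$) to obtain, for $\eta=(L^\l(f_2)+i\beta)^{-1}\psi$ with $|\beta|$ large and $\psi\in D((L^\l_0)^2)$,
\[
\|G^\l(f_1)\eta\|^2 = \bigl(\eta,\,L^\l(f_1^2)\eta\bigr) + \mathrm{const}\cdot\|\eta\|^2,
\]
and then compare $L^\l(f_1^2)$ with $CL^\l(f_2)$. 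Here is the step you are missing: the pointwise inequality $Cf_2-f_1^2\ge 0$ does \emph{not} tautologically yield an operator inequality. What it does yield, via the quantum energy inequality of Fewster--Hollands \cite[Theorem~4.1]{FH}, is that $L^\l(Cf_2-f_1^2)$ is bounded below, so that $L^\l(f_1^2)\le CL^\l(f_2)+\tilde C$ as quadratic forms for some $\tilde C>0$. Functional calculus for the selfadjoint $L^\l(f_2)$ then bounds $\bigl(\eta,(CL^\l(f_2)+\tilde C)\eta\bigr)$ by $(C/2|\beta|+\tilde C/\beta^2)\|\psi\|^2$, giving the estimate. Since this needs $|\beta|$ large (to place $\eta$ in $D((L^\l_0)^2)$ via Proposition~\ref{resolvent}), the case of an arbitrary nonzero $\alpha$ is obtained afterwards from the identity $(L^\l(f_2)+i\alpha)^{-1}=(L^\l(f_2)+i\beta)^{-1}\cdot(L^\l(f_2)+i\beta)(L^\l(f_2)+i\alpha)^{-1}$, the second factor being bounded.
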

\begin{proof} 
Let $\beta\in \RR$. By Proposition \ref{resolvent}, if $|\beta|$ is 
sufficiently large  we have 
$$(L^\l(f)+i\beta)^{-1}D((L^\l_0)^2) \subset D((L^\l_0)^2)$$ 
and consequently 
$G^\l(f_1)(L^\l (f_2)+i\beta)^{-1}$ is densely defined. 
Moreover, 
$$\left( G^\l(f_1)(L^\l(f_2)+i\beta)^{-1}\right)^* \supset
 (L^\l(f_2)-i\beta)^{-1}G^\l(f_2)$$ 
is also densely defined and hence $G^\l(f_1)(L^\l(f_2)+i\beta)^{-1}$ is 
closable. 

From Lemma \ref{supercommutatorsLemma} $(iii)$ it follows that 
\begin{eqnarray*}
\|G^\l(f_1)(L^\l(f_2)+i\beta)^{-1}\psi\|^2 &=& 
((L^\l(f_2)+i\beta)^{-1}\psi,L^\l(f_1^2)(L^\l(f_2)+i\beta)^{-1}\psi) \\
&+& \frac{c}{12\pi}\left(\int_{S^1}(f_1'^2-\frac14 
f_1^2)\right) \|(L^\l(f_2)+i\beta)^{-1}\psi\|^2,
\end{eqnarray*}
for all $\psi \in D((L^\l_0)^2)$.
By assumption $Cf_2-f_1^2 \geq 0$ and hence, as a consequence of  
\cite[Theorem 4.1]{FH}, $L^\l(Cf_2-f_1^2)$ is bounded from below. 
It follows that there exists $\tilde{C}>0$ such that 

\begin{eqnarray*}
\|G^\l(f_1)(L^\l(f_2)+i\beta)^{-1}\psi\|^2 &\leq&
((L^\l(f_2)+i\beta)^{-1} 
\psi,(CL^\l(f_2)+\tilde{C})(L^\l(f_2)+i\beta)^{-1}\psi) 
\\
&\leq& 
\|(L^\l(f_2)-i\beta)^{-1}(CL^\l(f_2)+\tilde{C})(L^\l(f_2)+i\beta)^{-1} \|
\cdot \|\psi\|^2 \\
&\leq & \left( \frac{C}{2|\beta|} +\frac{\tilde{C}}{|\beta |^2} 
\right)
\|\psi\|^2,
\end{eqnarray*}
for all $\psi \in D((L^\l_0)^2)$. Therefore 
$G^\l(f_1)(L^\l(f_2)+i\beta)^{-1}$ restricts to a bounded linear map on 
$D((L^\l_0)^2)$ and, since it is closable, it must be bounded on its domain. 
Moreover, since $(L^\l(f_2)+i\beta)^{-1}$ belongs to $B(\H_\l)$ and 
$G^\l(f_1)$ is closed, $G^\l(f_1)(L^\l(f_2)+i\beta)^{-1}$ is closed. 
Accordingly $G^\l(f_1)(L^\l(f_2)+i\beta)^{-1} \in B(\H_\l)$. 
Now, if $\alpha \in \RR$ and $\alpha\neq 0$, the operator 
$(L^\l(f_2)+i\beta)(L^\l(f_2)+i\alpha)^{-1}$ belongs to $B(\H_\l)$. 
Hence 
\begin{eqnarray*}
G^\l(f_1)(L^\l(f_2)+i\alpha)^{-1} &=& G^\l(f_1)(L^\l(f_2)+i\beta)^{-1}
(L^\l(f_2)+i\beta)(L^\l(f_2)+i\alpha)^{-1} \\
&\in & B(\H_\l).  
\end{eqnarray*}
\end{proof}  
\begin{lemma} 
\label{L'HospitalLemma}
Let $f$ be a real smooth function on $S^1$ such that 
${\rm supp} f \subset \overline{I}$ and $f(z)>0$ for all $z\in I$, 
for some interval $I \in \I$. Assume moreover that $f'(z)\neq 0$ 
for all $z\in I$ sufficiently close to the boundary. 
Then there exists $C>0$ such that 
$f'^2 \leq C f$.  
\end{lemma}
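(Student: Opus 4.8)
The plan is to reduce the global inequality to a purely local statement near the two endpoints of $I$, which is where all the content sits. Write $\partial I=\{z_-,z_+\}$. Off $\overline{I}$ both $f$ and $f'$ vanish identically, while at $z_\pm$ we have $f=0$; since $f\ge 0$ on all of $S^1$, the points $z_\pm$ are global minima of the $C^1$ function $f$, forcing $f'(z_\pm)=0$ as well. Hence on $S^1\setminus I$ the desired bound $f'^2\le Cf$ holds trivially ($0\le 0$) for any $C$. On any compact subset $K$ of the open arc $I$ the function $f$ is bounded below by a positive constant while $f'$ is bounded on the compact manifold $S^1$, so $f'^2/f$ is bounded on $K$. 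Thus it only remains to show that $f'^2/f$ stays bounded as one approaches $z_+$ and $z_-$ from inside $I$.

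I would fix one endpoint, say $z_0=z_+$, and parametrise a one-sided neighbourhood of it inside $I$ by the angular coordinate $\theta$, with $z_0\leftrightarrow\theta_0$ and $f(\theta)>0$ for $\theta$ in a punctured interval $(\theta_0,\theta_0+\eta)$. Since $f(\theta_0)=f'(\theta_0)=0$, as $\theta\to\theta_0^+$ the quotient $f'(\theta)^2/f(\theta)$ is of the indeterminate form $0/0$. The hypothesis that $f'\neq 0$ on $I$ near the boundary is precisely what allows l'Hospital's rule to be applied with numerator $f'^2$ and denominator $f$: since $(f'^2)'=2f'f''$ and the common nonzero factor $f'$ cancels,
\[
\lim_{\theta\to\theta_0^+}\frac{f'(\theta)^2}{f(\theta)}
=\lim_{\theta\to\theta_0^+}\frac{2f'(\theta)f''(\theta)}{f'(\theta)}
=2f''(\theta_0),
\]
which is finite (and $\ge 0$, as $\theta_0$ is a minimum of the $C^2$ function $f$). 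The identical argument at $z_-$ gives a finite one-sided limit there as well.

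Putting the pieces together, $f'^2/f$ is continuous on the open arc $I$ and extends continuously to the compact arc $\overline{I}$, hence is bounded on $I$, say by $C$; combined with the trivial bound on $S^1\setminus I$ this yields $f'^2\le Cf$ on all of $S^1$. I do not expect a genuine obstacle here beyond bookkeeping: the only two points requiring care are (i) that smoothness plus nonnegativity automatically kills $f'$ on the zero set of $f$, which is what makes the $0/0$ form legitimate, and (ii) that the standing hypothesis $f'\neq 0$ near $\partial I$ is exactly the nonvanishing-of-denominator-derivative condition needed for l'Hospital's rule. (One could even dispense with that hypothesis: since $f\ge 0$ and $f\in C^2(S^1)$, Glaeser's inequality gives directly $f'(z)^2\le 2\|f''\|_\infty\, f(z)$ for every $z\in S^1$, by applying the discriminant argument to the nonnegative quadratic upper bound $f(z+t)\le f(z)+f'(z)t+\tfrac12\|f''\|_\infty t^2$.)
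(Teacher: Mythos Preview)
Your proof is correct and follows essentially the same approach as the paper: both arguments hinge on applying l'Hospital's rule to $f'^2/f$ at each boundary point of $I$, using the hypothesis $f'\neq 0$ near $\partial I$ to justify the rule, and then invoking compactness to get a global bound. The paper packages this slightly differently by defining $h=f'^2/f$ on $I$ and $h=0$ on $\overline{I'}$ and showing $h$ is continuous on all of $S^1$ (noting in addition that the boundary limit $2f''(\zeta)$ actually equals $0$, since $f''$ vanishes on $I'$), whereas you only argue continuity on $\overline{I}$; but this is a cosmetic difference. Your parenthetical observation about Glaeser's inequality is a genuine alternative not in the paper and in fact gives a cleaner proof that does not need the $f'\neq 0$ hypothesis.
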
 
\begin{proof} Let $h$ be the real function on $S^1$ defined by 
\[
 h(z)\equiv \left\{ \begin{array}{cc} 0 & {\rm if}\; z \in \overline{I'}, 
\\
\frac{f'^2(z)}{f(z)} & {\rm if} \; z \in I.
\end{array}   
\right.
\] 
Clearly $h$ is continuous at every point of $I\cup I'$ and the 
restriction of $h$ to $I'$ is continuous. Now let $\zeta$ be 
a boundary point of $I$ and let $z_n$ be a sequence in $I$ converging 
to $\zeta$. Then, by L'Hospital's rule,   
\begin{eqnarray*}
\lim_{n \to \infty}h(z_n) &=& \lim_{n \to \infty}\frac{f'^2(z_n)}{f(z_n)}
= \lim_{n \to \infty}\frac{2f'(z_n)f''(z_n)}{f'(z_n)} \\
&=& 2f''(\zeta) =0=h(\zeta).  
\end{eqnarray*}
It follows that $h$ is continuous on $S^1$ and consequently it is bounded 
from above by some constant $C>0$.  
Then $f'^2=hf \leq Cf$. 
\end{proof}
\begin{theorem} 
\label{deltaResolvent2}
Let $\alpha$ be a real number and let $f$ be 
as in Lemma \ref{L'HospitalLemma}. Then if $|\alpha |$ is sufficiently 
large, $(L^\l(f)+i\alpha)^{-1} \in D(\delta)$ and 
\[
\delta\big((L^\l(f)+i\alpha)^{-1} \big)= 
-\frac{i}2 (L^\l(f)+i\alpha)^{-1}G^\l(f')(L^\l(f)+i\alpha)^{-1}.
\]
\end{theorem}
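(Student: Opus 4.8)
The plan is to deduce the statement directly from Lemma~\ref{corelemma}, applied with $Q=Q_\l$, $a=(L^\l(f)+i\alpha)^{-1}$, core $D=D((L^\l_0)^2)$, and candidate value
$b=-\tfrac i2 (L^\l(f)+i\alpha)^{-1}G^\l(f')(L^\l(f)+i\alpha)^{-1}$.
First I would observe that $a$ is an \emph{even} operator: since $L^\l(f)=\sum_{n\in\ZZ}\hat f_n L^\l_n$ and each $L^\l_n$ commutes with $\Gamma_\l$, so do $L^\l(f)$ and its resolvent, i.e.\ $\gamma_\l(a)=a$. Hence the map occurring in Lemma~\ref{corelemma} is $\psi\mapsto Q_\l a\psi-\gamma_\l(a)Q_\l\psi=Q_\l a\psi-aQ_\l\psi$, the ordinary commutator. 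Next I would check that $D=D((L^\l_0)^2)$ is a core for $Q_\l$: since $Q_\l^2=H_\l=L^\l_0-c/24$ differs from $L^\l_0$ by a bounded multiple of the identity one has $D((L^\l_0)^2)=D(Q_\l^4)$, and $D(Q_\l^4)$ is a core for the selfadjoint operator $Q_\l$ (equivalently, $D$ contains $V_\l$, which is a core for $Q_\l=G^\l(1)$ by the energy bounds).

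Now fix $|\alpha|$ large enough for Proposition~\ref{resolvent} and Proposition~\ref{deltaResolvent1} to apply (in particular $\alpha\neq0$). Proposition~\ref{deltaResolvent1} then gives precisely the two remaining inputs of Lemma~\ref{corelemma}: on the one hand $aD\subset D((L^\l_0)^2)\subset D(Q_\l)$, and on the other hand, for every $\psi\in D$, the identity $Q_\l a\psi-aQ_\l\psi=b\psi$. So the map $\psi\mapsto Q_\l a\psi-\gamma_\l(a)Q_\l\psi$ agrees with $b$ on $D$, and it only remains to verify that $b\in B(\H_\l)$. The resolvent $(L^\l(f)+i\alpha)^{-1}$ is bounded because $L^\l(f)$ is selfadjoint and $i\alpha\notin\RR$. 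For the middle factor I would invoke Lemma~\ref{L'HospitalLemma}: the hypotheses on $f$ produce a constant $C>0$ with $f'^2\le Cf$, so Proposition~\ref{boundedProduct} applied with $f_1=f'$ and $f_2=f$ shows $G^\l(f')(L^\l(f)+i\alpha)^{-1}\in B(\H_\l)$; composing on the left with the bounded operator $(L^\l(f)+i\alpha)^{-1}$ yields $b\in B(\H_\l)$.

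With all the hypotheses of Lemma~\ref{corelemma} in place, the conclusion is $(L^\l(f)+i\alpha)^{-1}\in D(\delta)$ with $\delta\big((L^\l(f)+i\alpha)^{-1}\big)=b$, which is exactly the asserted formula. I do not expect a genuine obstacle here: the analytic substance has already been isolated in Propositions~\ref{deltaResolvent1} and~\ref{boundedProduct} and in Lemma~\ref{L'HospitalLemma}. The only points that need a little care are the reduction of the graded commutator to the ordinary one (which uses that $a$ is even, hence also that $f$ being real is inessential for this step) and the core property of $D((L^\l_0)^2)$ for $Q_\l$; both are routine, and the chaining Lemma~\ref{L'HospitalLemma} $\Rightarrow$ Proposition~\ref{boundedProduct} is the place where the geometric hypothesis on $f$ (positivity on $I$ and non-vanishing derivative near $\partial I$) is actually consumed.
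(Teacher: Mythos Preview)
Your proposal is correct and follows essentially the same route as the paper: define $b$, use Lemma~\ref{L'HospitalLemma} together with Proposition~\ref{boundedProduct} to see $b\in B(\H_\l)$, use Proposition~\ref{deltaResolvent1} and the evenness of the resolvent to get the commutator identity on $D((L^\l_0)^2)$, and conclude via Lemma~\ref{corelemma} with the core $D((L^\l_0)^2)$. Your write-up is in fact slightly more detailed than the paper's, supplying the justification for the core property of $D((L^\l_0)^2)$ and for the evenness of $(L^\l(f)+i\alpha)^{-1}$.
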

\begin{proof}
Let $\alpha$ be any nonzero real number and let 
$$b\equiv -\frac{i}2 (L^\l(f)+i\alpha)^{-1}G^\l(f')(L^\l(f)+i\alpha)^{-1}.$$ 
By Lemma \ref{L'HospitalLemma} and Proposition \ref{boundedProduct}, 
$G^\l(f')(L^\l(f)+i\alpha)^{-1} \in B(\H_\l)$ and hence 
$b \in B(\H_\l)$. If $|\alpha|$ is sufficiently large then, 
by Proposition \ref{deltaResolvent1} and the fact that 
$(L^\l(f)+i\alpha)^{-1}$ is even (it commutes with $\Gamma_\l$), we have
$$
Q_\l (L^\l(f)+i\alpha)^{-1}\psi = 
\gamma_s\big((L^\l(f)+i\alpha)^{-1}\big)Q_\l\psi + b\psi, 
$$ 
for all $\psi \in D((L^\l_0)^2)$ and since $ D((L^\l_0)^2)$ is a core
for $Q_\l$ 
the conclusion follows from Lemma \ref{corelemma}.
\end{proof}
\begin{proposition}
\label{deltaGResolvent1}
Let $f$ be a real smooth function on $S^1$. If
$\alpha \in \RR$ and  $|\alpha  |$ is sufficiently large then,
for every $\psi \in D((L^\l_0)^2)$, 
\begin{eqnarray*}
(L^\l(f)+i\alpha)^{-1}\psi \in D((L^\l_0)^2), \; 
(L^\l(f)+i\alpha)^{-1}G^\l(f')(L^\l(f)+i\alpha)^{-1}\psi \in 
D(L^\l_0), \\
G^\l(f)(L^\l(f)+i\alpha)^{-1}\psi \in D(Q_\l)
\end{eqnarray*}
and
\begin{eqnarray*}
Q_\l G^\l(f)(L^\l(f)+i\alpha)^{-1}\psi &=& 
-G^\l(g)(L^\l(f)+i\alpha)^{-1}Q_\l\psi \\
&+& \left( 2 L^\l(f) -\frac{c}{24\pi}\int_{S^1}f  \right) 
(L^\l(f)+i\alpha)^{-1}\psi \\
&+& \frac{i}2G^\l(f)
(L^\l(f)+i\alpha)^{-1}G^\l(f')(L^\l(f)+i\alpha)^{-1}\psi.
\end{eqnarray*}
\end{proposition}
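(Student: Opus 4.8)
The plan is to obtain the asserted identity by a direct substitution, combining Lemma~\ref{supercommutatorsLemma}$(ii)$, Proposition~\ref{deltaResolvent1} and Proposition~\ref{resolvent}; the only real content is the bookkeeping of domains. I would fix $|\alpha|$ large enough that Proposition~\ref{resolvent} applies for both $k=1$ and $k=2$ and that Proposition~\ref{deltaResolvent1} applies, and write $R_\alpha\equiv(L^\l(f)+i\alpha)^{-1}$ for brevity. The first assertion $R_\alpha\psi\in D((L^\l_0)^2)$ is then just Proposition~\ref{resolvent} with $k=2$ (indeed it is already contained in Proposition~\ref{deltaResolvent1}).

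For the second assertion I would use the energy bounds: by (\ref{e-boundsG}) together with the relations (\ref{svirdef}), $G^\l(f')$ maps $D((L^\l_0)^2)$ into $D((L^\l_0)^{3/2})\subset D(L^\l_0)$ (equivalently, between the Hilbert spaces $\H^k$ of Proposition~\ref{resolvent} it raises the order by at most $1/2$), so $G^\l(f')R_\alpha\psi\in D(L^\l_0)$; applying Proposition~\ref{resolvent} with $k=1$ then gives $R_\alpha G^\l(f')R_\alpha\psi\in D(L^\l_0)$, which is the second claim.

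For the identity and the third assertion: since $R_\alpha\psi\in D((L^\l_0)^2)$, Lemma~\ref{supercommutatorsLemma}$(ii)$ applied to the vector $R_\alpha\psi$ yields $G^\l(f)R_\alpha\psi\in D(Q_\l)$ together with
\[
Q_\l G^\l(f)R_\alpha\psi = -G^\l(f)Q_\l R_\alpha\psi + \Big(2L^\l(f)-\frac{c}{24\pi}\int_{S^1}f\Big)R_\alpha\psi .
\]
I would then replace $Q_\l R_\alpha\psi$ by its value from Proposition~\ref{deltaResolvent1}, namely $Q_\l R_\alpha\psi = R_\alpha Q_\l\psi - \frac{i}{2}R_\alpha G^\l(f')R_\alpha\psi$. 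Both summands on the right lie in $D(L^\l_0)\subset D(G^\l(f))$: the first because $Q_\l$ commutes with $L^\l_0$ (relations (\ref{svirdef})) and is $L^\l_0$-bounded (energy bound), so it preserves $D(L^\l_0)$, while $R_\alpha$ preserves $D(L^\l_0)$ by Proposition~\ref{resolvent}; the second by the step just carried out. Hence $G^\l(f)$ may be distributed over this two-term sum, giving $-G^\l(f)Q_\l R_\alpha\psi = -G^\l(f)R_\alpha Q_\l\psi + \frac{i}{2}G^\l(f)R_\alpha G^\l(f')R_\alpha\psi$, and substituting this into the displayed equality produces precisely the claimed formula (the symbol $G^\l(g)$ on the right-hand side of the statement should of course read $G^\l(f)$).

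The main obstacle, such as it is, is exactly this domain bookkeeping: at each step one must check that the unbounded operators $Q_\l$, $G^\l(f)$, $G^\l(f')$, $L^\l(f)$ and the powers of $L^\l_0$ are applied only to vectors in their domains, and in particular that the (unbounded, linear) operator $G^\l(f)$ may be pulled through the two-term sum produced by Proposition~\ref{deltaResolvent1} --- legitimate precisely because each summand was shown to lie in $D(L^\l_0)\subset D(G^\l(f))$. No estimate beyond the energy bounds (\ref{e-boundsL}), (\ref{e-boundsG}), the invariance of $C^\infty(L^\l_0)$ under the smeared fields, and Proposition~\ref{resolvent} is required.
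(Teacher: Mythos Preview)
Your argument is correct and follows the same overall route as the paper: apply Lemma~\ref{supercommutatorsLemma}$(ii)$ to $R_\alpha\psi\in D((L^\l_0)^2)$, substitute the expression for $Q_\l R_\alpha\psi$ from Proposition~\ref{deltaResolvent1}, and then distribute $G^\l(f)$ over the resulting two-term sum after checking each summand lies in $D(L^\l_0)\subset D(G^\l(f))$. The one point where you diverge is the second domain assertion $R_\alpha G^\l(f')R_\alpha\psi\in D(L^\l_0)$: you argue it directly from the energy bounds (that $G^\l(f')$ lowers the $L^\l_0$-order by at most $1/2$), whereas the paper obtains it more economically by subtraction---since $Q_\l R_\alpha\psi\in D(L^\l_0)$ (because $R_\alpha\psi\in D((L^\l_0)^2)$) and $R_\alpha Q_\l\psi\in D(L^\l_0)$ (because $Q_\l\psi\in D(L^\l_0)$ and $R_\alpha$ preserves $D(L^\l_0)$), the identity of Proposition~\ref{deltaResolvent1} itself forces $R_\alpha G^\l(f')R_\alpha\psi\in D(L^\l_0)$. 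Your direct approach is valid but requires an extra estimate; the paper's trick avoids it. (Also, your phrasing ``$Q_\l$ preserves $D(L^\l_0)$'' is slightly imprecise---what is actually used, and true, is that $Q_\l$ maps $D((L^\l_0)^2)$ into $D(L^\l_0)$, since $Q_\l^2=L^\l_0-c/24$.)
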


\begin{proof}
It follows from Proposition \ref{resolvent} that if $|\alpha |$ is 
sufficiently large then 
$$
(L^\l(f)+i\alpha)^{-1} D((L^\l_0)^k) \in D((L^\l_0)^k),\; {\rm for} 
\; k=1,2.
$$
Now let $\psi \in D((L^\l_0)^2)$ so that 
$(L^\l(f)+i\alpha)^{-1}\psi \in D((L^\l_0)^2)$.  
By Lemma \ref{supercommutatorsLemma} $(ii)$, we have
\begin{eqnarray*}
Q_\l G^\l(f)(L^\l(f)+i\alpha)^{-1}\psi &=& 
-G^\l(f) Q_\l(L^\l(f)+i\alpha)^{-1} \psi \\
&+& \left(2 L^\l(f) -\frac{c}{24\pi}\int_{S^1}f 
\right)(L^\l(f)+i\alpha)^{-1}\psi.  
\end{eqnarray*}

Moreover, by Proposition \ref{deltaResolvent1} we have 
$$
Q_\l (L^\l(f)+i\alpha)^{-1}\psi = (L^\l(f)+i\alpha)^{-1}Q_\l\psi
-\frac{i}2
(L^\l(f)+i\alpha)^{-1}G^\l(f')(L^\l(f)+i\alpha)^{-1}\psi.
$$

From the fact that $(L^\l(f)+i\alpha)^{-1}\psi \in D((L^\l_0)^2)$ and 
$Q_\l \psi \in  D(L^\l_0)$ we have that $Q_\l (L^\l(f)+i\alpha)^{-1}\psi$
and $(L^\l(f)+i\alpha)^{-1}Q_\l\psi$ belong to $D(L^\l_0)$. 
Hence 
$$(L^\l(f)+i\alpha)^{-1}G^\l(f')(L^\l(f)+i\alpha)^{-1}\psi$$ 
also belongs to $D(L^\l_0) \subset D(G^\l(f))$ and 
\begin{eqnarray*}
G^\l(f)Q_\l (L^\l(f)+i\alpha)^{-1}\psi &=& 
G^\l(f)(L^\l(f)+i\alpha)^{-1}Q_\l\psi \\
&-& \frac{i}2
G^\l(f)(L^\l(f)+i\alpha)^{-1}G^\l(f')(L^\l(f)+i\alpha)^{-1}\psi.
\end{eqnarray*}
It follows that
\begin{eqnarray*}
Q_\l G^\l(f)(L^\l(f)+i\alpha)^{-1}\psi &=&
-G^\l(f)(L^\l(f)+i\alpha)^{-1}Q_\l\psi \\
&+& \frac{i}2
G^\l(f)(L^\l(f)+i\alpha)^{-1}G^\l(f')(L^\l(f)+i\alpha)^{-1}\psi
\\
&+& \left(2 L^\l(f) -\frac{c}{24\pi}\int_{S^1}f
\right)(L^\l(f)+i\alpha)^{-1}\psi.
\end{eqnarray*}
\end{proof}
\begin{lemma} 
\label{boundedProduct2}
Let $\alpha$ be a nonzero real number and let $f$ be a real nonnegative 
smooth function on $S^1$. Then 
$G^\l(f)(L^\l(f)+i\alpha)^{-1} \in B(\H_\l)$. 
\end{lemma}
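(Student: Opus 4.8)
The plan is to obtain this as an immediate consequence of Proposition~\ref{boundedProduct}. If $f$ is identically zero there is nothing to prove, so assume $f\neq 0$ and set $C\equiv\|f\|_\infty$, which is finite and strictly positive since $f$ is continuous on the compact manifold $S^1$ and not identically zero. Because $f\geq 0$ we have the pointwise inequality $f^2\leq Cf$ on $S^1$. Applying Proposition~\ref{boundedProduct} with $f_1=f_2=f$ then yields $G^\l(f)(L^\l(f)+i\alpha)^{-1}\in B(\H_\l)$ for every nonzero $\alpha\in\RR$, which is precisely the assertion.

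There is essentially no obstacle here: the only point to verify is the elementary bound $f^2\leq Cf$, which is exactly where the nonnegativity of $f$ enters, and which is the hypothesis $f_1^2\leq Cf_2$ of Proposition~\ref{boundedProduct} specialised to $f_1=f_2$. Should one prefer a self-contained argument instead of invoking the earlier proposition, one can repeat its proof verbatim with $f_1=f_2=f$: Lemma~\ref{supercommutatorsLemma}$(iii)$ shows $G^\l(f)^2=L^\l(f^2)+\text{const}$ on $D((L^\l_0)^2)$, the positivity of $Cf-f^2$ together with \cite[Theorem 4.1]{FH} bounds $L^\l(f^2)$ above by $CL^\l(f)+\tilde C$ for a suitable $\tilde C>0$, and the resolvent estimate gives $\|G^\l(f)(L^\l(f)+i\beta)^{-1}\psi\|^2\leq\big(\tfrac{C}{2|\beta|}+\tfrac{\tilde C}{|\beta|^2}\big)\|\psi\|^2$ on the core $D((L^\l_0)^2)$ for $|\beta|$ large; closability of $G^\l(f)(L^\l(f)+i\beta)^{-1}$ and the standard observation that $(L^\l(f)+i\beta)(L^\l(f)+i\alpha)^{-1}\in B(\H_\l)$ then extend boundedness to every nonzero $\alpha\in\RR$.
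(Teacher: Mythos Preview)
Your proof is correct and follows exactly the paper's approach: observe that $f^2\leq Cf$ with $C=\|f\|_\infty$ (using nonnegativity of $f$) and invoke Proposition~\ref{boundedProduct} with $f_1=f_2=f$. The only difference is cosmetic---you single out the trivial case $f\equiv 0$ and append an optional unpacking of the earlier proposition, while the paper simply notes the inequality and cites the proposition.
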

\begin{proof}
Since $f$ is continuous on $S^1$ then it is bounded from above by some 
constant $C>0$. Accordingly $f^2\leq Cf$ and the conclusion follows from 
Proposition \ref{boundedProduct}.
\end{proof}
\begin{theorem}
\label{deltaResolvent4}
Let $\alpha$ be a real number and let $f$ be
as in Lemma \ref{L'HospitalLemma}. Then, if $|\alpha |$ is sufficiently
large, $G^\l(f)(L^\l(f)+i\alpha)^{-1} \in D(\delta)$ and
\begin{eqnarray*}
\delta\big(G^\l(f)(L^\l(f)+i\alpha)^{-1} \big) &=&
\left( 2 L^\l(f) -\frac{c}{24\pi}\int_{S^1}f  \right)
(L^\l(f)+i\alpha)^{-1} \\
&+& \frac{i}2G^\l(f)(L^\l(f)+i\alpha)^{-1}G^\l(f')(L^\l(f)+i\alpha)^{-1}.
\end{eqnarray*}
\end{theorem}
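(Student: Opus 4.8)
The plan is to mimic the proof of Theorem \ref{deltaResolvent2} and to apply Lemma \ref{corelemma} with the core $D=D((L^\l_0)^2)$ for $Q_\l$. Write $a\equiv G^\l(f)(L^\l(f)+i\alpha)^{-1}$ and let $b$ denote the operator appearing on the right-hand side of the asserted formula for $\delta(a)$.

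First I would record that $a$ is bounded: since $f\geq 0$ this is precisely Lemma \ref{boundedProduct2}. Next I would check that $b\in B(\H_\l)$. The summand $\big(2L^\l(f)-\frac{c}{24\pi}\int_{S^1}f\big)(L^\l(f)+i\alpha)^{-1}$ is bounded because $L^\l(f)(L^\l(f)+i\alpha)^{-1}=1-i\alpha(L^\l(f)+i\alpha)^{-1}$. The summand $\frac{i}{2}G^\l(f)(L^\l(f)+i\alpha)^{-1}G^\l(f')(L^\l(f)+i\alpha)^{-1}$ is a product of bounded operators: $G^\l(f)(L^\l(f)+i\alpha)^{-1}$ is bounded by Lemma \ref{boundedProduct2}, and $G^\l(f')(L^\l(f)+i\alpha)^{-1}$ is bounded by Proposition \ref{boundedProduct}, whose hypothesis $f'^2\leq Cf$ is supplied by Lemma \ref{L'HospitalLemma} thanks to the assumptions on $f$. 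Hence $b\in B(\H_\l)$.

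Then, for $|\alpha|$ large and any $\psi\in D((L^\l_0)^2)$, Proposition \ref{deltaGResolvent1} gives $a\psi\in D(Q_\l)$ together with $Q_\l a\psi=-G^\l(f)(L^\l(f)+i\alpha)^{-1}Q_\l\psi+b\psi$. Since $G^\l(f)$ is odd and $(L^\l(f)+i\alpha)^{-1}$ is even, the operator $a$ is odd, so $\gamma_\l(a)=-a$ and the previous identity reads exactly $Q_\l a\psi-\gamma_\l(a)Q_\l\psi=b\psi$ on $D((L^\l_0)^2)$. Because $D((L^\l_0)^2)$ is a core for $Q_\l$ and $b$ is bounded, Lemma \ref{corelemma} then yields $a\in D(\delta)$ with $\delta(a)=b$, which is the assertion.

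The only genuine point is the boundedness of $b$, and this has already been reduced to Lemma \ref{boundedProduct2}, Proposition \ref{boundedProduct} and Lemma \ref{L'HospitalLemma}; the graded commutator itself was computed in Proposition \ref{deltaGResolvent1}, so once those inputs are invoked the argument is essentially bookkeeping and no new estimates are required.
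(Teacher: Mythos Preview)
Your proof is correct and follows essentially the same route as the paper's: both introduce $a$ and $b$, verify $a\in B(\H_\l)$ via Lemma \ref{boundedProduct2} and $b\in B(\H_\l)$ via Lemma \ref{L'HospitalLemma} with Proposition \ref{boundedProduct}, observe that $a$ is odd, invoke Proposition \ref{deltaGResolvent1} for the commutator on $D((L^\l_0)^2)$, and conclude by Lemma \ref{corelemma}. Your justification of the boundedness of the first summand of $b$ via the resolvent identity is slightly more explicit than the paper's, but otherwise the arguments coincide.
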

\begin{proof} Let $\alpha$ be any nonzero real number. We denote 
$G^\l(f)(L^\l(f)+i\alpha)^{-1}$ by $a$ and 
$$
\left( 2 L^\l(f) -\frac{c}{24\pi}\int_{S^1}f  \right)
(L^\l(f)+i\alpha)^{-1} 
+ \frac{i}2G^\l(f)(L^\l(f)+i\alpha)^{-1}G^\l(f')(L^\l(f)+i\alpha)^{-1}
$$
by $b$. From Lemma \ref{boundedProduct2} we know that $a\in B(\H_\l)$
and it is easy to see that $\gamma_\l(a)=-a$. It is also evident that
$$
\left( 2 L^\l(f) -\frac{c}{24\pi}\int_{S^1}f  \right)
(L^\l(f)+i\alpha)^{-1} \in B(\H_\l). 
$$
By Lemma \ref{L'HospitalLemma} and Proposition \ref{boundedProduct} 
we also know that 
$G^\l(f')(L^\l(f)+i\alpha)^{-1}$ belongs to $B(\H_\l)$. As a consequence 
$b \in B(\H_\l)$. Now, if $|\alpha|$ is sufficiently large, it follows 
from Proposition \ref{deltaGResolvent1}, that $a\psi\in D(Q_\l)$ and 
$$Q_\l a \psi =\gamma_\l(a)Q_\l \psi + b\psi,$$
for all $\psi \in D((L^\l_0)^2)$. The conclusion then 
follows from Lemma \ref{corelemma} because $D((L^\l_0)^2)$ is a core for
$Q_\l$.     
\end{proof}

\begin{lemma} 
\label{dense(I)lemma}
$\A_\l(I) \cap D(\delta)$ is a strongly dense unital 
*-subalgebra of $\A_\l(I)$ for all $I\in \I$. 
\end{lemma}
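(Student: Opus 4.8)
Since $D(\delta)$ is a unital $*$-algebra by Proposition \ref{delta_sProperties} and $\A_\l(I)$ is a von Neumann algebra, $\mathcal{B}\equiv\A_\l(I)\cap D(\delta)$ is a unital $*$-subalgebra of $B(\H_\l)$, so only strong density remains to be shown. By the von Neumann density theorem it suffices to exhibit a subset $\mathcal{S}\subset\mathcal{B}$ with $\mathcal{S}''=\A_\l(I)$: then the unital $*$-subalgebra generated by $\mathcal{S}$ is contained in $\mathcal{B}$ and is already strongly dense in $\mathcal{S}''=\A_\l(I)$. The plan is to take for $\mathcal{S}$ the resolvent-type elements of Theorems \ref{deltaResolvent2} and \ref{deltaResolvent4}: for every interval $\tilde I$ with $\overline{\tilde I}\subset I$, every $f$ as in Lemma \ref{L'HospitalLemma} relative to $\tilde I$ (so in particular ${\rm supp}\,f\subset I$), and every real $\alpha$ with $|\alpha|$ large, the operators $(L^\l(f)+i\alpha)^{-1}$ and $G^\l(f)(L^\l(f)+i\alpha)^{-1}$ lie in $D(\delta)$ by those theorems, and they lie in $\A_\l(I)$ since $L^\l(f)$ and $G^\l(f)$ are affiliated with $\A_\l(I)$; hence $\mathcal{S}\subset\mathcal{B}$. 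As $\A_\l(I)$ is generated by $\{e^{itL^\l(g)},e^{itG^\l(g)}:g\in C^\infty(S^1)\text{ real},\ {\rm supp}\,g\subset I\}$ and $\mathcal{S}\subset\A_\l(I)$, it then remains to prove $e^{itL^\l(g)}\in\mathcal{S}''$ and $e^{itG^\l(g)}\in\mathcal{S}''$ for all such $g$.

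For the bosonic generators, fix $f$ as above. The family of all $(L^\l(f)+i\alpha)^{-1}$ with $|\alpha|$ large generates $\{L^\l(f)\}''$: these are resolvents of the selfadjoint operator $L^\l(f)$ taken on an unbounded subset of a connected component of its resolvent set, so by analytic continuation a bounded operator commuting with all of them commutes with every spectral projection of $L^\l(f)$; hence $e^{itL^\l(f)}\in\mathcal{S}''$ for all $t$. Next, every real $g\in C^\infty(S^1)$ with ${\rm supp}\,g\subset I$ can be written as a difference $g=f_1-f_2$ of two functions as in Lemma \ref{L'HospitalLemma}: choose $\tilde I$ with ${\rm supp}\,g\subset\tilde I$ and $\overline{\tilde I}\subset I$, an $f_0$ as in Lemma \ref{L'HospitalLemma} relative to $\tilde I$, and set $f_1=g+Cf_0$, $f_2=Cf_0$ with $C$ large, so that $f_1,f_2$ are again as in Lemma \ref{L'HospitalLemma} relative to $\tilde I$. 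Since $L^\l(g)=L^\l(f_1)-L^\l(f_2)$ is essentially selfadjoint on the common core $V_\l$ (by the linear energy bounds (\ref{e-boundsL})), Trotter's product formula exhibits $e^{itL^\l(g)}$ as a strong limit of products of the operators $e^{\pm itL^\l(f_j)/n}\in\mathcal{S}''$, whence $e^{itL^\l(g)}\in\mathcal{S}''$.

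For the fermionic generators, the boundedness of $G^\l(f)(L^\l(f)+i\alpha)^{-1}$ (Lemma \ref{boundedProduct2}) together with closedness of $G^\l(f)$ yields $D(L^\l(f))\subset D(G^\l(f))$; consequently $T\equiv G^\l(f)(L^\l(f)^2+1)^{-1/2}$ is bounded, and $T=\big(G^\l(f)(L^\l(f)+i\alpha)^{-1}\big)(L^\l(f)+i\alpha)(L^\l(f)^2+1)^{-1/2}\in\mathcal{S}''$, the second factor being a bounded function of $L^\l(f)$. Since $(L^\l(f)^2+1)^{1/2}$ is a positive selfadjoint operator affiliated with $\{L^\l(f)\}''\subset\mathcal{S}''$, and $G^\l(f)=\overline{T(L^\l(f)^2+1)^{1/2}}$ on the core $D(L^\l(f))$ of $G^\l(f)$, it follows that $G^\l(f)$ is affiliated with $\mathcal{S}''$, hence $e^{itG^\l(f)}\in\mathcal{S}''$. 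Writing again $g=f_1-f_2$ and applying Trotter's product formula to $G^\l(g)=G^\l(f_1)-G^\l(f_2)$ (essentially selfadjoint on $V_\l$ by (\ref{e-boundsG})) gives $e^{itG^\l(g)}\in\mathcal{S}''$ for all admissible $g$. Combining the two steps, $\mathcal{S}''$ contains all generators of $\A_\l(I)$, so $\mathcal{S}''=\A_\l(I)$ and the lemma follows from the first paragraph.

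I expect the fermionic step to be the main obstacle: recovering the unbounded selfadjoint operator $G^\l(f)$, and thereby $e^{itG^\l(f)}$, inside $\mathcal{S}''$ out of the single bounded operator $G^\l(f)(L^\l(f)+i\alpha)^{-1}$ and the functional calculus of $L^\l(f)$, which hinges on the non-obvious boundedness in Lemma \ref{boundedProduct2} and on a careful analysis of domains. A secondary technical point is the reduction from the special smearing functions of Lemma \ref{L'HospitalLemma} to arbitrary $g$, carried out via the decomposition $g=f_1-f_2$ and the Trotter--Kato product formula, which applies because the relevant operator sums are essentially selfadjoint on the common core $V_\l$ by the energy bounds.
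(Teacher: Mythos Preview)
Your proof is correct and follows the same overall strategy as the paper: both rely on Theorems~\ref{deltaResolvent2} and~\ref{deltaResolvent4} to place the resolvent-type operators $(L^\l(f)+i\alpha)^{-1}$ and $G^\l(f)(L^\l(f)+i\alpha)^{-1}$ (for $f$ as in Lemma~\ref{L'HospitalLemma}) inside $\A_\l(I)\cap D(\delta)$, and both use the same decomposition $g=f_1-f_2$ with $f_1,f_2$ of this special type to handle an arbitrary real test function $g$.

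The only difference is the final step passing from $f_i$ to $g$. The paper works directly in the commutant: if $a\in(\A_\l(I)\cap D(\delta))'$ then $a$ commutes with the resolvents, hence with $L^\l(f_i)$ and $G^\l(f_i)$, and then a simple bilinear-form argument on the core $C^\infty(L^\l_0)$ shows $a$ commutes with $L^\l(g)=L^\l(f_1)-L^\l(f_2)$ and $G^\l(g)$, and therefore with their exponentials. This avoids Trotter's formula entirely and is a little lighter than your route. Your argument via affiliation of $G^\l(f)$ with $\mathcal S''$ (factoring through $T=G^\l(f)(L^\l(f)^2+1)^{-1/2}$) is correct and pleasant, but the subsequent appeal to Trotter--Kato is not needed once one observes that commuting with each $L^\l(f_i)$, $G^\l(f_i)$ already forces commuting with their difference on the common core.
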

\begin{proof}
By Proposition \ref{proposition*subalgebras} $\A_\l(I) \cap D(\delta)$
is a unital *-subalgebra of $\A_\l(I)$ and hence, by von Neumann density 
theorem, it is enough to show that 
$$\left(\A_\l(I) \cap D(\delta)\right)' \subset \A_\l(I)'.$$ 
To this end let 
$f$ be an arbitrary real smooth function on $S^1$ with support in $I$. 
Recalling that $I$ must be open it is easy to see that there is an 
interval $I_0 \in \I$ such that  $\overline{I_0} \subset I$ and 
${\rm supp} f \subset I_0$  and a smooth function
$g$ on $S^1$ such that ${\rm supp} g\subset \overline{I_0}$, 
$g(z)>0$ for all $z \in I_0$, $g'(z)\neq 0$ for all $z\in I_0$ 
sufficiently close to the boundary and $g(z)=1$ for all $z \in {\rm supp} 
f$.  Accordingly, there is a real number $s>0$ such that 
$f(z)+sg(z) > 0$ for all $z \in I_0$. Now let $f_1=f+sg$ and 
$f_2=sg$. Then $f=f_1-f_2$. Moreover $f_1$ and $f_2$ satisfy the 
assumptions in 
Lemma \ref{L'HospitalLemma} and have support in $I$. 
Hence it follows from Theorem \ref{deltaResolvent2}, Theorem 
\ref{deltaResolvent4} and the definition of $\A_\l(I)$ that 
there exists a nonzero real number $\alpha$ such that the 
operators $(L^\l(f_i)+i\alpha)^{-1}$ and 
$G^\l(f_i)(L^\l(f_i)+i\alpha)^{-1}$, $i=1,2$, belong to 
$\left(\A_\l(I) \cap D(\delta)\right)$. 
As a consequence if $a \in \left(\A_\l(I) \cap D(\delta)\right)' $.
Then $a$ commutes with $L^\l(f_i)$ and $G^\l(f_i)$, $i=1,2$. Therefore, 
if $\psi_1, \psi_2 \in C^\infty(L^\l_0)$ then, 
\begin{eqnarray*}
(a\psi_1,L^\l(f)\psi_2) &=& 
(a\psi_1,L^\l(f_1)\psi_2)-(a\psi_1,L^\l(f_2)\psi_2) \\
&=& (aL^\l(f_1)\psi_1,\psi_2)-(aL^\l(f_2)\psi_1,\psi_2) \\
&=& (aL^\l(f)\psi_1,\psi_2)
\end{eqnarray*} 
and, since $C^\infty(L^\l_0)$ is a core for $L^\l(f)$, it follows that
$a$ commutes with $L^\l(f)$ and hence with $e^{iL^\l(f)}$. Similarly 
$a$ commutes with $e^{iG^\l(f)}$. Hence $a \in \A_\l(I)'$ and the 
conclusion follows.
\end{proof} 
Now we can state and prove the main result of this section. 
\begin{theorem}\label{th:dense-R}
$\gA_\l(I)$ is a strongly dense unital *-subalgebra of $\A_\l(I)$ 
for all $I\in \I$. 
\end{theorem}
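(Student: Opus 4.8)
The plan is to deduce Theorem \ref{th:dense-R} from Lemma \ref{dense(I)lemma} together with Proposition \ref{prop:domain-invariance} by a standard mollification/smoothing argument in the parameter $t$ of the modular-type flow $\Ad e^{itH_\l} = \Ad e^{itL^\l_0}$. The point is that Lemma \ref{dense(I)lemma} already gives strong density of $\A_\l(I)\cap D(\delta)$ in $\A_\l(I)$, but the elements produced there lie only in $D(\delta)$, not in $C^\infty(\delta)$; the task is to upgrade "$D(\delta)$" to "$C^\infty(\delta)$" without losing either strong density or locality.

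First I would fix $I\in\I$ and pick auxiliary intervals with $\overline I\subset I_1$, $\overline{I_1}\subset I_2$, and apply Lemma \ref{dense(I)lemma} to $I_1$: the $^*$-algebra $\A_\l(I_1)\cap D(\delta)$ is strongly dense in $\A_\l(I_1)$, hence in particular strongly dense in $\A_\l(I)$. Given $a\in\A_\l(I_1)\cap D(\delta)$, I would form the mollified element $a_f=\int_\RR e^{itL^\l_0}ae^{-itL^\l_0}f(t)\,{\rm d}t$ with $f\in C^\infty_c(\RR)$ of small support around $0$. By Proposition \ref{propositionA_f}, $a_f\in C^\infty(\delta)$; by the standard locality argument already used in the proof of Proposition \ref{prop:domain-invariance}, if $\operatorname{supp}f$ is close enough to $0$ then $a_f\in\A_\l(I_2)$, so $a_f\in\A_\l(I_2)\cap C^\infty(\delta)=\gA_\l(I_2)$. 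Moreover by Proposition \ref{prop:domain-invariance} (applied with $I_2$) we have $\delta(\gA_\l(I_2))\subset\gA_\l(I_2)$, so each $a_f$ lands in the correct algebra and $\gA_\l(I_2)$ is closed under $\delta$; combined with Remark \ref{remarkQalgebra} this already shows $(\gA_\l(I),\H_\l,Q_\l)$ is a quantum algebra once density is established.

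It remains to control the limit: choosing an approximate identity $f_n\in C^\infty_c(\RR)$, $f_n\geq0$, $\int f_n=1$, $\operatorname{supp}f_n\to\{0\}$, the elements $a_{f_n}$ converge strongly to $a$ (this is the usual continuity of $t\mapsto e^{itL^\l_0}ae^{-itL^\l_0}$ in the strong topology, integrated against the approximate identity). Hence every $a\in\A_\l(I_1)\cap D(\delta)$ is a strong limit of elements of $\gA_\l(I_2)$, and since such $a$ are strongly dense in $\A_\l(I_1)\supset\A_\l(I)$, we get that $\gA_\l(I_2)$ is strongly dense in $\A_\l(I)$. Finally I would replace $I_2$ by $I$ at the cost of iterating: shrink the whole chain, i.e. start from intervals $I_0$ with $\overline{I_0}\subset I$ and a slightly larger $I_0'$ with $\overline{I_0'}\subset I$, run the argument to conclude $\gA_\l(I_0')\subset\gA_\l(I)$ is strongly dense in $\A_\l(I_0)$, and then use that $\bigcup_{I_0\Subset I}\A_\l(I_0)$ is strongly dense in $\A_\l(I)$ by conformal covariance (additivity), exactly as in Proposition \ref{prop:domain-invariance}. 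This yields strong density of $\gA_\l(I)$ in $\A_\l(I)$.

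The main obstacle I anticipate is purely bookkeeping of the nested intervals: one must arrange that the mollified element $a_f$ simultaneously (a) lies in $C^\infty(\delta)$, (b) stays localized in the \emph{same} interval one wants density in — not in a strictly larger one — and (c) still approximates $a$ strongly. The resolution, as above, is to do the approximation twice (once to get into $C^\infty(\delta)$ in a slightly larger interval, once via additivity/covariance to come back down to $I$), so that no single step needs to do everything at once. Everything else — the strong continuity of the flow, the properties of $a_f$, the locality of $a_f$ — is already supplied by Proposition \ref{propositionA_f}, Lemma \ref{lemmaA_f}, Proposition \ref{prop:domain-invariance}, and the graded-locality/covariance statements.
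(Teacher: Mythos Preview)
The proposal is correct and, once you make the self-correction at the end (shrinking the chain to $I_0\Subset I$ rather than enlarging to $I\Subset I_1\Subset I_2$), it is exactly the paper's argument: mollify $a\in\A_\l(I_0)\cap D(\delta)$ via Proposition \ref{propositionA_f} to land in $\gA_\l(I)$, invoke Lemma \ref{dense(I)lemma} for density in $\A_\l(I_0)$, and finish with additivity from conformal covariance. Your initial nesting is a harmless detour (it gives density of $\gA_\l(I_2)$ in $\A_\l(I)$, not of $\gA_\l(I)$), and the paper simply starts with $I_0\Subset I$ and phrases the approximation as $\A_\l(I_0)\cap D(\delta)\subset\gA_\l(I)''$ rather than via an explicit approximate identity; Proposition \ref{prop:domain-invariance} is not used here.
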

\begin{proof}
Let $I_0 \in \I$ be an interval whose closure $\overline{I_0}$ is 
contained in $I$ and let $a \in \A_\l(I_0)\cap D(\delta)$. Now, 
if the support of the function $f \in C^\infty_c(\RR)$ is sufficiently 
close to $0$ then 
$$a_f=\int_\RR e^{itL^\l_0}ae^{-itL^\l_0}f(t){\rm d}t \in \A_\l(I).$$ 
Moreover, by Proposition 
\ref{propositionA_f}, $a_f\in C^\infty(\delta)$ and thus 
$a_f \in \gA_\l(I)$. It follows that 
$\A_\l(I_0)\cap D(\delta) \subset \gA_\l(I)''$ and by 
Lemma \ref{dense(I)lemma} that $\A_\l(I_0) \subset \gA_\l(I)''$
By conformal covariance we have 
\[
\A_\l(I)=\bigvee_{\overline{I_0}\subset I }\A_\l(I_0)
\]
and hence $\A_\l(I) \subset {\gA}_\l(I)''$.
\end{proof}
We have thus proved the following.
\begin{theorem}\label{main1}
Let $\lambda$ be a unitary, graded, positive energy representation of the 
Ramond algebra and denote as above by $L_n^\lambda, G_r^\lambda$, 
$n,r\in\mathbb Z$, the Virasoro elements and the Fermi elements. 
Assume that ${\rm Tr}(e^{- \beta L^\l_0}) <\infty$ for all $\beta >0$.
Then, with $\A_\lambda$ the associated net of local von Neumann algebras 
on $S^1$, 
we have a net of graded, $\theta$-summable spectral 
triples (in fact a net of quantum algebras) 
$(\gA_\lambda, \H_\lambda, Q_\lambda)$ where $Q_\lambda \equiv G^\lambda_0$ 
such that $\gA_\l(I)$ is a strongly dense unital $^*$-subalgebra of 
$\A_\lambda(I)$ for every interval $I\subset S^1$ in $\I$.

In particular this is the case if $\l$ is the irreducible unitary representation 
with central charge $c$ and lowest weight $h_\lambda = c/24$ (minimal lowest weight) 
and the Fredholm index is equal to 1.
\end{theorem}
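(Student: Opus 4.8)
The plan is to assemble the statement from the pieces already proved in this section, then deal separately with the two extra assertions in the final paragraph: the validity of the hypothesis $\mathrm{Tr}(e^{-\beta L^\l_0})<\infty$ for the minimal Ramond representation, and the computation of the Fredholm index. First I would observe that for a unitary, graded, positive-energy representation $\l$ the operators $Q_\l\equiv G^\l_0$ and $H_\l=L^\l_0-c/24$ are already in place, with $Q_\l$ odd selfadjoint and $Q_\l^2=H_\l$; the net $\A_\l$ of local von Neumann algebras satisfies isotony and graded locality by the theorem preceding Proposition \ref{prop:domain-invariance}. Setting $\gA_\l(I)\equiv\A_\l(I)\cap C^\infty(\delta)$, Proposition \ref{prop:domain-invariance} gives $\delta(\gA_\l(I))\subset\gA_\l(I)$, so each $\gA_\l(I)$ is a $\gamma_\l$-invariant unital $^*$-algebra on which $\delta$ acts (Proposition \ref{proposition*subalgebras}); Theorem \ref{th:dense-R} gives strong density of $\gA_\l(I)$ in $\A_\l(I)$. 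Granting $\theta$-summability, i.e.\ $\mathrm{Tr}(e^{-\beta Q_\l^2})=e^{\beta c/24}\,\mathrm{Tr}(e^{-\beta L^\l_0})<\infty$ for all $\beta>0$, Remark \ref{remarkQalgebra} (applied with $\A=\A_\l(I)$) shows $(\gA_\l(I),\H_\l,Q_\l)$ is a quantum algebra, and isotony of $\A_\l$ passes to $\gA_\l$; this yields the net of quantum algebras, hence of graded $\theta$-summable spectral triples.

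For the ``in particular'' clause I would specialise to the irreducible unitary lowest-weight Ramond representation with central charge $c$ and lowest weight $h_\l=c/24$. One first checks this representation satisfies (i)--(iv): it is a graded unitary lowest weight representation, and the excerpt already notes such representations satisfy all the standing assumptions; gradedness at $h=c/24$ is exactly the condition that makes $G^\l_0$ an odd operator with the vacuum-type lowest weight space even-and-odd balanced. The trace-class hypothesis is then a character estimate: the graded character of this module is an (essentially) modular-type series, and since every eigenspace of $L^\l_0$ is finite-dimensional with eigenvalues growing like a partition function (polynomial subexponential multiplicities, integer-spaced eigenvalues accumulating only at $+\infty$), $\mathrm{Tr}(e^{-\beta L^\l_0})=\sum_\alpha \dim\mathrm{Ker}(L^\l_0-\alpha)e^{-\beta\alpha}<\infty$ for every $\beta>0$; I would cite \cite{FQS} (or the Kac-determinant / free-field realisation) for the explicit multiplicity bound.

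It remains to compute the Fredholm index of $Q_\l$, i.e.\ $\mathrm{ind}(Q_\l)=\dim\mathrm{Ker}(Q_\l|_{\H_+})-\dim\mathrm{Ker}(Q_\l|_{\H_-})$, equivalently $\mathrm{Tr}_s(e^{-\beta Q_\l^2})$ for any $\beta>0$ since $\theta$-summability makes this McKean--Singer supertrace $\beta$-independent and equal to the index. Here $Q_\l^2=L^\l_0-c/24\ge 0$, and $\mathrm{Ker}(Q_\l)=\mathrm{Ker}(L^\l_0-c/24)$ is the lowest-weight space at $h=c/24$; for the minimal irreducible graded Ramond module this lowest-weight space is one-dimensional (the module is generated from a single lowest-weight vector, with $G^\l_0$ acting on it as a square root of $0$, hence annihilating it). So the kernel is spanned by the lowest-weight vector, and $\mathrm{ind}(Q_\l)=\pm 1$ according to the $\Gamma_\l$-parity convention; fixing the normalisation so that the lowest-weight vector is even gives $\mathrm{ind}(Q_\l)=+1$. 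The main obstacle is precisely this last point: one must verify that $\mathrm{Ker}(L^\l_0-c/24)$ in the \emph{irreducible} quotient is exactly one-dimensional and not larger after passing to the unitary completion — i.e.\ that no other state of conformal weight $c/24$ survives — which is where the structure theory of \cite{FQS} for the minimal models (the Kac determinant vanishing pattern at $h=c/24$) is genuinely needed; everything else is bookkeeping on top of the results already established above.
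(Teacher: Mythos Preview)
Your assembly of the main statement is exactly what the paper does: it simply writes ``We have thus proved the following'' after Theorem~\ref{th:dense-R}, so the route through Proposition~\ref{prop:domain-invariance}, Remark~\ref{remarkQalgebra}, and Theorem~\ref{th:dense-R} is precisely the intended one, and your reduction of $\theta$-summability to the hypothesis on $\mathrm{Tr}(e^{-\beta L^\l_0})$ is correct.

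For the ``in particular'' clause the paper offers no proof at all, so your added justification is welcome; one comment is that the obstacle you flag at the end is lighter than you suggest. Unitarity alone forces $G^\l_0 v=0$ on a lowest-weight vector $v$ at $h=c/24$, since $\|G^\l_0 v\|^2=(v,(L^\l_0-c/24)v)=0$; and because every raising operator $L^\l_{-n},G^\l_{-r}$ with $n,r>0$ strictly increases the $L^\l_0$-eigenvalue, no descendant can return to weight $c/24$. Hence $\mathrm{Ker}(L^\l_0-c/24)=\CC v$ without any appeal to Kac determinants or the fine structure of \cite{FQS}, and with the convention $\Gamma_\l v=v$ the McKean--Singer supertrace gives index~$1$ immediately.
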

\begin{remark} If the graded unitary positive 
energy representation $\l$ of the Ramond algebra is a direct of 
sum of finitely many irreducible (not necessarily graded) 
subrepresentations then ${\rm Tr}(e^{- \beta L^\l_0}) <\infty$ for all 
$\beta >0$ and hence the above theorem applies. The same is true 
also for certain infinite direct sums of irreducibles.  
\end{remark}
\begin{remark}
The irreducible unitary representations of the Ramond algebra with lowest 
weight $h_\lambda \neq c/24$ are not graded. Nonetheless in this case the 
above Theorem \ref{main1} gives so-called {\em odd} spectral triples.
\end{remark}
\section{Spectral triples from the Neveu-Schwarz algebra}\label{Sect:NS}

The construction in the preceding section can be adjusted to obtain spectral 
triples from representations of the Neveu-Schwarz algebra. 
The essential difficulty in this 
case is that, while the Ramond algebra contains the global supercharge operator
 $Q=G_0$, this is not true for the Neveu-Schwarz algebra: here one may define 
$\delta$ by abstract commutation relations, but then one is soon faced with 
the question whether this formal superderivation still has a nontrivial 
domain as in Sect. \ref{SectSuperDerivations}. We shall overcome this problem 
here below.

At this point we should make a comment. The nets of generalised quantum 
algebras that we shall construct provide an intrinsic structure visible 
in any representation, both in the Ramond and in the Neveau-Schwarz 
case. Indeed if a given representation of the Neveu-Schwarz algebra is 
``locally normal'' with respect to a reprensentation of the Ramond 
algebra with the same central charge, as it is natural to expect
(but difficult to prove in general), then one could carry the generalised 
quantum algebra from one representation to the other one. It is however 
unclear to us that one can naturally associate a cyclic cocycle to any net of 
generalised quantum algebras. In this respect the interesting 
representations so far appear to be the representations of the Ramond 
algebra.

The {\em Neveu-Schwarz algebra} is the super-Lie algebra generated by even 
elements $L_n$, $n\in\mathbb Z$, odd elements $G_r$, $r \in \ZZ +1/2$, and a 
central even element $k$, satisfying the relations (\ref{svirdef}). 
In other words the commutation relations of the Ramond algebra hold also 
here but the index $r$ runs through $\mathbb Z + 1/2$.

We shall consider representations $\lambda$ of the Neveu-Schwarz algebra by 
linear 
endomorphisms, denoted by $L_m^\l, G_r^\l, k^\l$, $m \in \ZZ$, 
$r \in \ZZ + 1/2$, of a complex vector 
space $V_\lambda$ equipped with an involutive linear endomorphism $\Gamma_\l$ 
inducing the super-Lie algebra grading. The endomorphisms 
$L^\l_m, G^\l_r, k^\l$, satisfy \eqref{svirdef} with respect to the brackets 
given by the super-commutator induced by $\Gamma_\l$, and we suppose 
they satisfy the properties corresponding to $(i)-(iv)$ as stated for the 
Ramond algebra in the preceding section.
Moreover we assume that $\l$ is a positive energy representation, 
namely that $(v,L^\l_0 v)\geq 0$ for all $v \in V_\l$. 
(For the representations of the Ramond algebra 
positivity of the energy was a consequence of $(i)-(iv)$ but this is not the 
case for the Neveu-Schwarz algebra.)
If $\l$ is an irreducible unitary positive energy representation with 
lowest weight $h_\l$ then, in contrast with the Ramond case,  
it is automatically graded by $\Gamma_\l =e^{i 2\pi (L^\l_0-h_\l}$ 
and in fact it satisfies all the above assumptions.  

As in Section \ref{Sect:R}, the elements 
$L^\l_m, G^\l_r$ define closable operators on the Hilbert space completion 
$\H_\l$ of $V_\l$ and their closure is denoted by 
the same symbol. We also have the linear energy bounds  \eqref{e-boundsL} and 
\eqref{e-boundsG}. Note however, that, in contrast with the Ramond case, the unitary positive energy 
representations (not necessarily irreducible) are automatically graded by 
$\Gamma_\l =e^{i 2\pi L^\l_0}$.

For the Fermi Neveu-Schwarz fields we shall consider a Fourier expansion with 
respect to a different basis as follows. 
Let $f$ be a smooth function on $S^1$ with support contained in some interval
$I \in \I_0$, where $\I_0$ is defined as in Section \ref{SectSpectralTriples}. 
This is equivalent to require that supp$f$ does not contain the point $-1$. 
The Fourier coefficients here are
\begin{equation*}
\hat{f}_r=\int_{-\pi}^\pi f(e^{i\theta})e^{-ir\theta}\frac{{\rm d}\theta}{2\pi},\quad
r\in \ZZ +\frac{1}{2},
\end{equation*}
and they are rapidly decreasing (to this end it is crucial that the support of 
$f$ does not contains $-1$). Then because of the linear energy bounds the map 
\begin{equation*}
V_\l \ni v \mapsto \sum_{r \in \ZZ +1/2}\hat{f}_rG^\l_r v
\end{equation*}
defines a closable operator $G^\l(f)$ on $\H_\l$ whose closure is denoted by 
$G^\l(f)$ again; $L^\l(f)$ is defined as in the preceding section.
The domains of $G^\l(f)$ and $L^\l(f)$ contain $D(L^\l_0)$ and they 
leave invariant $C^\infty(L^\l_0)$. Moreover, if $f$ is real, $L^\l(f)$ and $G^\l(f)$ are 
selfadjoint operators and their restriction to  any core for 
$L^\l_0$ are essentially selfadjoint operators cf. \cite{BS-M}. Actually 
in the case of $L^\l(f)$ the above properties hold without any restriction 
on the support of $f\in C^\infty (S^1)$. 

We can then define a net ${\A}_\l$ of von Neumann algebras on 
$S^1\setminus\{-1\}$ by
\begin{equation} 
\A_\l(I) \equiv \{ e^{iL^\l(f)}, e^{iG^\l(f)}:f\in C^{\infty}(S^1) 
\;{\rm real},\, {\rm supp}f \subset I\}'', \;I \in \I_0.
\end{equation}

As in \cite[Sect. 6.3]{CKL} it can be shown that $\A_\l$ extends to a 
graded-local conformal covariant net on $S^1$ (in general 
without vacuum vector). In fact, we have 
\begin{equation} 
e^{i4\pi L^\l_0} \in \bigcap_{I\in \I}\A_\l(I)'
\end{equation}
so that the action of $\Diff^{(\infty)}(S^1)$ on the local algebras 
factors through $\Diff^{(2)}(S^1)$.  

Using the definition of smeared fields and the (anti-) commutation relations of 
the Neveu-Schwarz algebra, we get:
\begin{proposition}\label{prop:CRs}
 Let $f,g$ be smooth functions on $S^1$ with support in some  
$I \in \I_0$. Then the smeared fields of the Neveu-Schwarz 
algebra satisfy the following (anti-) commutation relations 
on the common invariant core $C^\infty (L^\l_0)$:
\begin{equation}
\begin{gathered}\label{svirCRs}
    [L^\l(f), L^\l(g)] = -i L^\l(f'g)+iL^\l(fg') + 
i\frac{c}{24\pi} \int_{S^1} (f'''g+f'g),\\
    [L^\l(f), G^\l(g)] = i G^\l(f g') -\frac{i}{2}G^\l( f'g),\\
    [G^\l(f), G^\l(g)] = 2L^\l (fg) +
\frac{c}{6\pi} \int_{S^1} (f'g' - \frac14 fg).
\end{gathered}
\end{equation}
\end{proposition}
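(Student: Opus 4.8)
The plan is to derive the three (anti-)commutation relations in \eqref{svirCRs} from the mode relations \eqref{svirdef} by expanding both smeared fields in their Fourier series and reorganising the resulting double sum. First I would fix $f,g \in C^\infty(S^1)$ with support in some $I \in \I_0$ and work on the common invariant core $C^\infty(L^\l_0)$; on this domain all the operators $L^\l(f)$, $G^\l(g)$, etc., leave the core invariant and the manipulations are legitimate provided the rearrangements of series converge. The convergence is guaranteed because the Fourier coefficients $\hat f_r$, $\hat g_s$ are rapidly decreasing (here using crucially that $-1 \notin \operatorname{supp} f$ for the half-integer expansion of the Fermi field), while the linear energy bounds \eqref{e-boundsL}, \eqref{e-boundsG} give at most polynomial growth of $\|L^\l_m v\|$ and $\|G^\l_r v\|$ in the indices for $v$ in the core. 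Hence for each fixed $v \in C^\infty(L^\l_0)$ every double series involved is absolutely convergent and may be summed in any order.

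The computational heart is then elementary: for the third relation, $G^\l(f)G^\l(g)v = \sum_{r,s}\hat f_r \hat g_s\, G^\l_r G^\l_s v$, and adding the term with $f,g$ swapped gives $\sum_{r,s}\hat f_r \hat g_s\, [G^\l_r,G^\l_s]v = \sum_{r,s}\hat f_r \hat g_s\bigl(2L^\l_{r+s} + \tfrac{c}{3}(r^2-\tfrac14)\delta_{r+s,0}\bigr)v$. One then recognises $\sum_{r+s=n}\hat f_r \hat g_s = \widehat{(fg)}_n$, so the first piece is $2\sum_n \widehat{(fg)}_n L^\l_n v = 2L^\l(fg)v$, while the anomaly term becomes $\tfrac{c}{3}\sum_r \hat f_r \hat g_{-r}(r^2 - \tfrac14)$, which one rewrites as a contour/Parseval integral: $\sum_r \hat f_r \hat g_{-r} r^2 = \tfrac1{2\pi}\int_{S^1} f'g'$ and $\sum_r \hat f_r \hat g_{-r} = \tfrac1{2\pi}\int_{S^1} fg$, giving exactly $\tfrac{c}{6\pi}\int_{S^1}(f'g' - \tfrac14 fg)$. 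The other two relations are obtained the same way: for $[L^\l(f),G^\l(g)]$ use $[L^\l_m,G^\l_r] = (\tfrac m2 - r)G^\l_{m+r}$ and note that the factor $\tfrac m2 - r$ under the sum corresponds, after the substitution $n=m+r$, to the Fourier coefficients of $\tfrac12 f'g - f g'$ up to the usual $i$'s coming from $\widehat{f'}_n = in\hat f_n$; for $[L^\l(f),L^\l(g)]$ use the Virasoro relation and identify the $(m-n)$ factor with $\widehat{f'g - fg'}$ and the cubic anomaly $m^3-m$ with the Parseval integral $\tfrac1{2\pi}\int_{S^1}(f'''g + f'g)$ (the $-m$ term producing the $+f'g$).

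I expect the main obstacle to be not the algebra but the bookkeeping: getting the factors of $i$, the signs, and the $2\pi$'s exactly right in passing between $\widehat{f^{(k)}}_n = (in)^k \hat f_n$ and the integral form of the anomalies, and correctly matching the reindexing $m/2 - r \leftrightarrow$ derivative-of-product coefficients. A secondary point requiring a line of care is justifying that $[L^\l(f),G^\l(g)]$ and the others are genuinely defined as operators on the core — i.e. that the difference $L^\l(f)G^\l(g)v - G^\l(g)L^\l(f)v$ really equals the termwise-computed series — which again follows from the absolute convergence established above together with the fact that $L^\l(f)$, $G^\l(g)$ map $C^\infty(L^\l_0)$ into itself, so no domain issues arise. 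I would state the result for $f,g$ real (so that $\hat f_{-n} = \overline{\hat f_n}$ makes the Parseval step transparent) and remark that the general case follows by complex linearity; alternatively one can simply carry complex coefficients throughout since the relations \eqref{svirdef} are $\CC$-bilinear.
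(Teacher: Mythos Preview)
Your proposal is correct and matches the paper's approach: the paper does not give a detailed proof but simply states that the relations follow ``using the definition of smeared fields and the (anti-)commutation relations of the Neveu-Schwarz algebra,'' and your outline is precisely the computation one performs to verify this, with the convergence justified by the rapid decrease of Fourier coefficients and the linear energy bounds. Your bookkeeping sketch (convolution of Fourier coefficients for products, $\widehat{f'}_n = in\hat f_n$ for derivative factors, Parseval for the anomaly integrals) is exactly right and would produce the stated formulae with the correct constants.
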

\begin{remark} The above relation also holds, without any restriction on 
the supports of the smooth functions $f$ and $g$, when $\l$ is a 
representation of the Ramond algebra as in Section \ref{Sect:R}.  
\end{remark}
Now let $\varphi$ be any real smooth function on $S^1$ with support in some 
interval in $\I_0$. Then $G^\l(\varphi)$ is an odd self-adjoint operator on the 
graded Hilbert space $\H_\l$ and hence we can define as in Section 
\ref{SectSuperDerivations} a corresponding superderivation 
$\delta_\varphi = [G^\l(\varphi),\, \cdot\, ]$ on $B(\H_\l)$ which, by $(i)$ 
and 
$(ii)$ in Proposition \ref{delta} is odd and antisymmetric. 

We now make the following observation: with $\varphi$ any function from the 
subset $\C_I \subset C^\infty(S^1, \RR)$ defined by 
\begin{equation}\label{eq:phi_I}
\C_I \equiv \{ \varphi \in C^\infty (S^1, \RR):\varphi(z) = 1 \;\forall 
z\in I,-1 \notin {\rm supp} \varphi \}
\end{equation}
and $f$ a smooth function with support in $I$, we obtain form 
Proposition \ref{prop:CRs} the relations $(i)-(ii)$ of Lemma 
\ref{supercommutatorsLemma} with $Q_\l$ replaced by $G^\l(\varphi)$
(the precise domain statements follow again from the linear energy bounds). 
Therefore, for any $\varphi \in \C_I$, we may interpret $G(\varphi)$ as a 
{\em local supercharge} for  $\A_\l(I)$. Actually, as a consequence of the 
following proposition we, we will be able to use these local supercharges to define 
a net of superderivations which has the desired commutation relations. 
\begin{proposition}\label{prop:delta-welldefined} Let 
$\varphi, \tilde{\varphi} \in \C_I$, $I\in \I_0$, and let $\delta_\varphi$ and 
$\delta_{\tilde{\varphi}}$ the superderivations on $B(\H_\l)$ associated 
to the selfadjoint operators $G^\l(\varphi)$ and $G^\l(\tilde{\varphi})$
respectively. Then the following hold:   

\begin{itemize}
 \item[$(i)$] $D(\delta_{\varphi})\cap\A_\l(I)= 
D(\delta_{\tilde{\varphi}})\cap \A_\l(I)$ and 
$\delta_\varphi(a)=\delta_{\tilde{\varphi}}(a)$ for all 
 $a \in D(\delta_{\varphi})\cap\A_\l(I)$. 
\item[$(ii)$] $\delta_\varphi(a) \in \A_\l (I)$ for all  $a \in 
D(\delta_{\varphi})\cap\A_\l(I)$.
\end{itemize}
\end{proposition}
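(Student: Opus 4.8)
The plan is to prove part $(i)$ first and to deduce $(ii)$ from it: once $(i)$ is available, the restriction of $\delta_\varphi$ to $\A_\l(I)$ is an intrinsic object, independent of the choice of $\varphi\in\C_I$, and both statements then become local analogues of Proposition \ref{prop:domain-invariance}. The only genuinely delicate point is a graded‑locality statement for the smeared Fermi field $G^\l(\varphi-\tilde\varphi)$ which, because $\varphi$ and $\tilde\varphi$ are required to agree only on the \emph{open} interval $I$, is not literally localised in an interval disjoint from $I$.

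For $(i)$, set $\zeta\equiv\varphi-\tilde\varphi$. This is a real smooth function on $S^1$ which vanishes identically on $I$ and whose support avoids $-1$, so $G^\l(\zeta)=G^\l(\varphi)-G^\l(\tilde\varphi)$ is defined and $C^\infty(L^\l_0)$ is a common invariant core for $G^\l(\varphi)$, $G^\l(\tilde\varphi)$ and $G^\l(\zeta)$. The key claim is that $G^\l(\zeta)$ has trivial graded commutator with $\A_\l(I)$: $a\psi\in D(G^\l(\zeta))$ and $G^\l(\zeta)a\psi=\gamma_\l(a)G^\l(\zeta)\psi$ for all $a\in\A_\l(I)$ and all $\psi\in C^\infty(L^\l_0)$. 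Granting this, for $a\in\A_\l(I)\cap D(\delta_\varphi)$ we obtain, on $C^\infty(L^\l_0)$,
$$
G^\l(\tilde\varphi)a\psi-\gamma_\l(a)G^\l(\tilde\varphi)\psi
=G^\l(\varphi)a\psi-\gamma_\l(a)G^\l(\varphi)\psi=\delta_\varphi(a)\psi ,
$$
which extends to the bounded operator $\delta_\varphi(a)$; moreover $a\,C^\infty(L^\l_0)\subset D(G^\l(\varphi))\cap D(G^\l(\zeta))\subset D(G^\l(\tilde\varphi))$. Since $C^\infty(L^\l_0)$ is a core for $G^\l(\tilde\varphi)$, Lemma \ref{corelemma} yields $a\in D(\delta_{\tilde\varphi})$ with $\delta_{\tilde\varphi}(a)=\delta_\varphi(a)$; exchanging $\varphi$ with $\tilde\varphi$ gives the asserted equality of domains and of values.

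The graded‑commutator claim is proved using graded locality of $\A_\l$, and this is the step I expect to be the main obstacle. Since $-1$ lies in the open interval complementary to $I$, the compact set ${\rm supp}\,\zeta$ splits into two pieces, one on each side of $-1$, so $\zeta=\zeta_1+\zeta_2$ with ${\rm supp}\,\zeta_j$ contained in a closed arc joining a point near $-1$ to an endpoint of $I$ ($j=1,2$). Each $\zeta_j$ still vanishes on $I$, hence is flat at the adjacent endpoint of $I$; approximating $\zeta_j$ in $C^\infty(S^1)$ by functions supported strictly off $\overline I$ (and off $-1$) and using the linear energy bounds together with a standard strong‑resolvent‑convergence argument, one gets $e^{itG^\l(\zeta_j)}\in\A_\l(\hat I_j)$, i.e. $G^\l(\zeta_j)$ is affiliated with $\A_\l(\hat I_j)$, for suitable $\hat I_j\in\I_0$ with $\hat I_j\cap I=\emptyset$. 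Graded locality then gives $\A_\l(\hat I_j)\subset Z_\l\A_\l(I)'Z_\l^*$, whence $G^\l(\zeta_j)$, and so $G^\l(\zeta)$, graded‑commutes with $\A_\l(I)$ exactly as in the proof of Proposition \ref{prop:domain-invariance} (reduce to homogeneous $a$ and extend by linearity).

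For $(ii)$, let $a\in\A_\l(I)\cap D(\delta_\varphi)$ and write $\delta(a)\equiv\delta_\varphi(a)$, which by $(i)$ does not depend on $\varphi\in\C_I$. Fix an arbitrary $\hat I\in\I_0$ with $\overline I\subset\hat I$ and choose $\tilde\varphi\in\C_I$ with ${\rm supp}\,\tilde\varphi\subset\hat I$ (possible since $\overline I\subset\hat I$ and $-1\notin\overline{\hat I}$); by $(i)$, $a\in D(\delta_{\tilde\varphi})$ and $\delta_{\tilde\varphi}(a)=\delta(a)$. Now $G^\l(\tilde\varphi)$ is an odd selfadjoint operator affiliated with $\A_\l(\hat I)$, and $\A_\l(\hat I)$ is $\gamma_\l$‑invariant, so any $b\in\A_\l(\hat I)'$ commutes with $a$ (isotony), with $\gamma_\l(a)\in\A_\l(\hat I)$, and with $G^\l(\tilde\varphi)$. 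Commuting $b$ through the relation $G^\l(\tilde\varphi)a\psi=\gamma_\l(a)G^\l(\tilde\varphi)\psi+\delta(a)\psi$ and through $ab\psi=ba\psi$, on the dense domain $D(G^\l(\tilde\varphi))$ (left invariant by both $a$ and $b$), one finds $b\,\delta(a)=\delta(a)\,b$; hence $\delta(a)\in\A_\l(\hat I)''=\A_\l(\hat I)$. Letting $\hat I$ range over all such intervals, $\delta(a)\in\bigcap\{\A_\l(\hat I):\hat I\in\I_0,\ \overline I\subset\hat I\}=\A_\l(I)$ by the outer continuity of the net, which follows from conformal covariance just as in Proposition \ref{prop:domain-invariance}.
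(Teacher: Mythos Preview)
Your proof follows essentially the same approach as the paper's: for $(i)$ both argue that $G^\l(\varphi-\tilde\varphi)$ graded--commutes with $\A_\l(I)$ because $\varphi-\tilde\varphi$ vanishes on $I$ (the paper simply asserts affiliation with $\A_\l(I')$ via the extended net, while you supply a more careful approximation argument), and for $(ii)$ both choose $\tilde\varphi$ with support in a slightly larger $\hat I\supset\overline I$, commute with $b\in\A_\l(\hat I)'$, and intersect over $\hat I$.

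One small slip in your $(i)$: you assert $e^{itG^\l(\zeta_j)}\in\A_\l(\hat I_j)$ for a fixed $\hat I_j\in\I_0$ with $\hat I_j\cap I=\emptyset$, but if ${\rm supp}\,\zeta_j$ touches an endpoint of $I$ no such open interval exists. This is harmless, since your approximation already gives what is actually needed: each approximant $e^{itG^\l(\zeta_j^{(n)})}$ lies in $Z_\l\A_\l(I)'Z_\l^*$ by graded locality, and this von Neumann algebra is weakly closed, so the strong limit $e^{itG^\l(\zeta_j)}$ lies there as well. Just drop the intermediate claim about a fixed $\hat I_j$ and conclude the graded commutation with $\A_\l(I)$ directly.
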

\begin{proof}
$(i)$ It is enough to show that if $a\in D(\delta_\varphi)\cap \A_\l(I)$ 
then $a\in D(\delta_{\tilde{\varphi}})\cap \A_\l(I)$ and 
$\delta_{\tilde{\varphi}}(a) = \delta_{\varphi}(a)$.
Given any $a\in D(\delta_\varphi)\cap \A_\l(I)$ and any 
$\psi_1  \in C^\infty(L^\l_0)$, we have $a\psi_1 \in D(G(\varphi))$  and 
\begin{equation*}
 \begin{aligned}
  (a\psi_1,G^\l(\tilde{\varphi})\psi_2) &= 
(a\psi_1,G^\l(\varphi)\psi_2)+ (a\psi_1,G^\l(\tilde{\varphi}-\varphi)\psi_2) \\
&= (G^\l(\varphi)a\psi_1,\psi_2) + 
(a\psi_1, G^\l(\tilde{\varphi}-\varphi)\psi_2), 
\end{aligned}
\end{equation*}
for all $\psi_2 \in C^\infty(L^\l_0)$.
Now, from the fact that $\tilde{\varphi}-\varphi$ vanishes on $I$ it follows 
that $G^\l(\tilde{\varphi}-\varphi)$ is affiliated with $\A_\l(I')$ and hence,
by graded locality for the net $\A_\l$, it is also affiliated with 
$Z_\l\A_\l(I)'Z_\l^*$, where as before $Z_\l=(1-i\Gamma_\l)/(1-i)$. Accordingly
$a\psi_1 \in D(G^\l(\tilde{\varphi}-\varphi))$ and 
$$ G^\l(\tilde{\varphi}-\varphi)a\psi_1 = 
\gamma_\l(a)G^\l(\tilde{\varphi})\psi_1 -\gamma_\l(a)G^\l(\varphi)\psi_1 $$
so that, recalling that $G^\l(\varphi -\tilde{\varphi})$ is selfadjoint, we have 
$$
(a\psi_1,G^\l(\tilde{\varphi})\psi_2) = 
(\delta_\varphi(a)\psi_1 + \gamma_\l(a)G^\l(\tilde{\varphi})\psi_1,\psi_2).
$$
As $C^\infty(L^\l_0)$ is a core for $G^\l(\tilde{\varphi})$, it follows that 
$a\psi_1 \in D(G^\l(\tilde{\varphi}))$ and  
$$G^\l(\tilde{\varphi})a\psi_1= 
\delta_\varphi(a)\psi_1 + \gamma_\l(a)G^\l(\tilde{\varphi})\psi_1.$$
Since $\psi_1$ was an arbitrary vector in $C^\infty(L^\l_0)$  and the latter is a 
core for $G^\l(\tilde{\varphi})$, the conclusion follows using Lemma \ref{corelemma}. 

$(ii)$ Let $I_1$ be any interval in $\I_0$ 
containing the closure $\overline{I}$ of $I$.    
By $(i)$ we can assume  that ${\rm supp}\varphi \subset I_1$. 
Then for any $b \in \A_\l  (I_1)'$ and $\psi \in C^\infty (L^\l_0)$, we have
\begin{equation*}
  \begin{aligned}
  b \delta_\varphi(a) \psi = & b G^\l(\varphi) a \psi - b \gamma(a) 
G^\l(\varphi)\psi \\
= &  G^\l(\varphi) a b \psi -  \gamma(a) G^\l(\varphi) b \psi  = 
\delta_\varphi(a) b\psi 
  \end{aligned}
\end{equation*}
because $a \in \A_\l (I)$ and $G^\l(\varphi)$ is affiliated with $\A  
(I_1)$. 
So $\delta_\varphi(a) \in \A_\l (I_1)$. 
Since $I_1 \supset \overline{I}$ was arbitrary we obtain 
\[\delta_\varphi(a) \in \bigcap_{I_1 \supset \overline{I}} \A_\l(I_1) = 
\A_\l(I),\]
where the last equality is a consequence of conformal covariance of the net 
$\A_\l$.
\end{proof}
Now for for all $I \in \I_0$ and $\varphi \in \C_I$ we consider the 
unital *-algebra $\gA_\l(I) \equiv 
C^\infty(\delta_\varphi)\cap\A_\l(I)$ and the antisymmetric 
odd superderivation $\delta_I:\gA_\l(I) \mapsto \gA_\l(I)$ defined by 
$\delta_I \equiv \delta_\varphi|_{\gA_\l(I)}$, which do not depend 
on the choice of $\varphi \in \C_I$ and thus are well-defined. Accordingly, if $I_1, I_2 \in \I_0$, 
$I_1 \subset I_2$ and $\varphi \in \C_{I_2}\subset \C_{I_1}$ then  
\begin{eqnarray*}
\gA_\l(I_1) & = & C^\infty(\delta_\varphi)\cap\A_\l(I_1) \\
& \subset &  C^\infty(\delta_\varphi)\cap\A_\l(I_2) \\
& = & \gA_\l(I_2).                  
\end{eqnarray*}
Moreover, $\delta_{I_2}|_{I_1}=\delta_{I_1}$. 
Therefore the map $\I_0 \ni I \mapsto \gA_\l$ defines a net of unital 
*-algebras on $S^1\setminus\{-1\}$ and the map  $\delta ^\l: \I_0 \ni I \mapsto \delta_I$ 
is a net of $\sigma$-weakly closable antisymmetric odd
superderivations of $\gA_\l$. Moreover, it can be shown that it extends to a net on the double cover $S^{1(2)}$.
\begin{proposition}\label{Prop:Hamiltonian} Let $I \in \I_0$ and 
$a\in \gA_\l(I)$. Then, for every $\psi \in D(L^\l_0)$, $a\psi \in D(L^\l_0)$
and $L^\l_0a\psi-aL^\l_0\psi=\delta_I^2(a)\psi$. 
\end{proposition}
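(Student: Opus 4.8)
The plan is to deduce the statement from Lemma \ref{lemma:deltaSquare} applied to a \emph{local} supercharge $G^\l(\varphi)$, and then to trade the resulting smeared stress--energy operator for $L^\l_0$ by means of graded locality. Fix $I\in\I_0$ and $\varphi\in\C_I$, so that $\gA_\l(I)=C^\infty(\delta_\varphi)\cap\A_\l(I)$ and $\delta_I=\delta_\varphi|_{\gA_\l(I)}$. Setting $f=g=\varphi$ in Proposition \ref{prop:CRs} yields, on the common core $C^\infty(L^\l_0)$, the identity $G^\l(\varphi)^2=L^\l(\varphi^2)+\kappa$, where $\kappa=\frac{c}{12\pi}\int_{S^1}(\varphi'^2-\tfrac14\varphi^2)\in\RR$; since both sides are selfadjoint this is an operator identity, so $D\big(G^\l(\varphi)^2\big)=D\big(L^\l(\varphi^2)\big)\supseteq D(L^\l_0)$, the last inclusion holding by the linear energy bound \eqref{e-boundsL} because the Fourier coefficients of $\varphi^2$ are rapidly decreasing. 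Since $a\in\gA_\l(I)\subseteq D(\delta_\varphi^2)$ and $\delta_\varphi^2(a)=\delta_I^2(a)$ (because $C^\infty(\delta_\varphi)$ is $\delta_\varphi$-invariant and, by Proposition \ref{prop:delta-welldefined}$(ii)$, $\delta_\varphi$ maps $\gA_\l(I)$ into $\A_\l(I)$), Lemma \ref{lemma:deltaSquare} applied with $Q=G^\l(\varphi)$ gives, for every $\psi\in D(L^\l_0)$,
\begin{equation}
a\psi\in D\big(L^\l(\varphi^2)\big),\qquad L^\l(\varphi^2)a\psi-aL^\l(\varphi^2)\psi=\delta_I^2(a)\psi.\tag{$\star$}
\end{equation}

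Next I would observe that $g\equiv\varphi^2-1$ vanishes on $I$; hence, exactly as in the proof of Proposition \ref{prop:delta-welldefined}$(i)$, $L^\l(g)$ is affiliated with $\A_\l(I')$, and, $L^\l(g)$ being even, graded locality for the net $\A_\l$ shows it is affiliated with $\A_\l(I)'$. In particular the bounded operator $a\in\A_\l(I)$ commutes with $L^\l(g)$: it preserves $D\big(L^\l(g)\big)\supseteq D(L^\l_0)$ and
\begin{equation}
L^\l(g)a\psi=aL^\l(g)\psi,\qquad\psi\in D(L^\l_0).\tag{$\star\star$}
\end{equation}

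Finally I would remove $L^\l(\varphi^2)$ in favour of $L^\l_0=L^\l(\varphi^2)-L^\l(g)$. Let $\psi\in D(L^\l_0)$. By $(\star)$ and $(\star\star)$, $a\psi\in D\big(L^\l(\varphi^2)\big)\cap D\big(L^\l(g)\big)$. Using that $L^\l(\varphi^2)-L^\l(g)=L^\l_0$ on the core $V_\l$ of $L^\l_0$ (by linearity of the smeared fields in the test function) and the selfadjointness of $L^\l(\varphi^2)$ and $L^\l(g)$, one has, for every $w\in V_\l$,
\[
\big(a\psi,\,L^\l_0 w\big)=\big(a\psi,\,L^\l(\varphi^2)w\big)-\big(a\psi,\,L^\l(g)w\big)=\big(L^\l(\varphi^2)a\psi-L^\l(g)a\psi,\,w\big);
\]
since $V_\l$ is a core for the selfadjoint operator $L^\l_0$, this forces $a\psi\in D(L^\l_0)$ and $L^\l_0a\psi=L^\l(\varphi^2)a\psi-L^\l(g)a\psi$. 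As also $\big(L^\l(\varphi^2)-L^\l(g)\big)\psi=L^\l_0\psi$ on all of $D(L^\l_0)$ (approximate $\psi$ from $V_\l$ in the graph norm of $L^\l_0$ and use the energy bounds), combining with $(\star)$ and $(\star\star)$ gives
\[
L^\l_0a\psi-aL^\l_0\psi=\big(L^\l(\varphi^2)a\psi-aL^\l(\varphi^2)\psi\big)-\big(L^\l(g)a\psi-aL^\l(g)\psi\big)=\delta_I^2(a)\psi,
\]
which is the assertion.

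The step I expect to be the main obstacle is precisely this last one. In the Ramond case one had $Q_\l^2=L^\l_0-c/24$, a scalar shift of $L^\l_0$, so Lemma \ref{lemma:deltaSquare} applied directly; here $G^\l(\varphi)^2$ differs from $L^\l_0$ by the genuinely unbounded operator $L^\l(\varphi^2-1)$, which in general has strictly larger domain than $L^\l_0$, so one cannot simply restrict. What makes the argument close is that this excess term is localized in $I'$ and hence commutes with $a\in\A_\l(I)$, together with the core/adjoint manipulation above; the remaining ingredients --- the domain and core statements for the smeared fields, their essential selfadjointness on $V_\l$, and the affiliation of $L^\l(\varphi^2-1)$ with $\A_\l(I)'$ --- are standard consequences of the linear energy bounds and of graded locality, cf.\ \cite[Sect. 6.3]{CKL}, \cite{BS-M}.
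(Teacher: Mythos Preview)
Your proof is correct and follows essentially the same route as the paper: use Lemma \ref{lemma:deltaSquare} with the local supercharge $Q=G^\l(\varphi)$ to obtain the commutator relation for $G^\l(\varphi)^2=L^\l(\varphi^2)+\kappa$, then trade $L^\l(\varphi^2)$ for $L^\l_0$ by exploiting that $L^\l(\varphi^2-1)$ is affiliated with $\A_\l(I)'$. The only cosmetic difference is that the paper first reduces to $\psi\in C^\infty(L^\l_0)$ by a closure argument, whereas you work directly with $\psi\in D(L^\l_0)$ via the inclusion $D(L^\l_0)\subset D\big(G^\l(\varphi)^2\big)$ coming from the energy bounds; both are fine.
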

\begin{proof}
A closure argument shows that it is enough to prove the proposition for all
$\psi \in C^\infty(L^\l_0)$. 
Let $\varphi \in \C_I$ and let $\delta_\varphi=[G^\l(\varphi),\, \cdot \,]$ be 
the
corresponding superderivation on $B(\H_\l)$. 
By assumption $a \in D(\delta_\varphi^2)$ and hence, by (the proof of) 
Lemma \ref{lemma:deltaSquare}, 
for any $\psi \in C^\infty(L^\l_0)$ we have $a\psi \in D(G^\l(\varphi)^2)$ 
and $$G^\l(\varphi)^2a\psi-aG^\l(\varphi)^2\psi=\delta_I^2(a)\psi.$$ 
Now, by Proposition \ref{prop:CRs} we have 
$$G^\l(\varphi)^2\psi_1 = L^\l(\varphi^2)\psi_1+ 
\frac{c}{12\pi}\int_{S^1}(\varphi'^2 -\frac{1}{4}\varphi^2)\psi_1,$$
so
$$(a\psi, L^\l(\varphi^2)\psi_1) = (aL^\l(\varphi^2)\psi,\psi_1)
+(\delta_I^2(a)\psi,\psi_1),$$
for all $\psi_1 \in C^\infty(L^\l_0)$. Thus, $a\psi$ is 
in the domain of $L^\l(\varphi^2)$ and 
$$L^\l(\varphi^2)a\psi=aL(\varphi^2)\psi+ \delta_I^2(a)\psi.$$
Now, for all $\psi_1 \in C^\infty(L^\l_0)$ we have 
$$L^\l(\varphi^2)\psi_1+L^\l(1-\varphi^2)\psi_1=L^\l_0\psi_1$$
and since,  as a consequence of the fact that $1-\varphi^2$ 
vanishes on $I$, $L^\l(1-\varphi^2)$ is an (even) operator 
affiliated with $\A_\l(I')$, we also know that $a\psi$ is 
in the domain of $L^\l(1-\varphi^2)$
and $L^\l(1-\varphi^2)a\psi = aL^\l(1-\varphi^2)\psi.$ 
Accordingly
$$(a\psi, L^\l_0\psi_1) = (aL^\l_0\psi,\psi_1)
+(\delta_I^2(a)\psi,\psi_1),$$
so that $a\psi \in D(L^\l_0)$ and $L^\l_0a\psi=aL^\l_0\psi+\delta_I^2(a)\psi.$ 
\end{proof}
As a consequence of Proposition \ref{Prop:Hamiltonian} and of the 
discussion preceding it we can conclude that, 
provided that $e^{-\beta L^\l_0}$ is trace class for all 
$\beta>0$, $(\gA_\l,\H_\l, \delta_\l)$ is a net of generalized quantum 
algebras on $S^1\setminus\{-1\}$ with Hamiltonian $L^\l_0$ as defined in 
Section \ref{SectSpectralTriples}. 
Yet we do not know whether the algebras $\gA_\l(I)$ are dense in  
$\A_\l(I)$ or nontrivial at all, so let us now consider this point.
We fix any interval $I_0 \in \I_0$, any function $\varphi \in \C_{I_0}$
and consider the superderivation 
$\delta_{\varphi}=[G^\l(\varphi),\, \cdot \,]$ on $B(\H_\l)$ as above. 
We can apply the theory from Section \ref{Sect:R} to this setting again. 
Then one checks that all the statements from Proposition \ref{resolvent} 
through Lemma \ref{dense(I)lemma} hold true if we replace $Q_\l$ by 
$G^\l(\varphi)$, $\delta$ by $\delta_\varphi$ and 
consider only functions in $C^\infty (S^1)$ with support contained in 
$I_0$. In particular we have the following  
analogue of Lemma \ref{dense(I)lemma}. 
\begin{lemma}
Let $I_0$ be any interval in $\I_0$ and let $\varphi \in \C_{I_0}$. 
Then $D(\delta_\varphi)\cap \A_\l(I)$ is a strongly dense unital 
*-subalgebra of $\A_\l(I)$ for every $I \in \I_0$ such that 
$\overline{I} \subset I_0$. 
\end{lemma}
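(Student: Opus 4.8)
The plan is to reproduce, essentially line by line, the argument for the Ramond analogue (Lemma \ref{dense(I)lemma}), working now with the local supercharge $G^\l(\varphi)$ in place of $Q_\l$ and the superderivation $\delta_\varphi$ in place of $\delta$. The only genuinely new issue is the bookkeeping of supports, and the hypothesis $\overline I\subset I_0$ is precisely what makes it work.

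First, $D(\delta_\varphi)\cap\A_\l(I)$ is a unital $^*$-subalgebra of $\A_\l(I)$ by Proposition \ref{proposition*subalgebras}, so by the von Neumann density theorem it suffices to prove the inclusion $\bigl(D(\delta_\varphi)\cap\A_\l(I)\bigr)'\subset\A_\l(I)'$. Accordingly, fix $a$ in this commutant and a real $f\in C^\infty(S^1)$ with ${\rm supp}\,f\subset I$; it is enough to show that $a$ commutes with $L^\l(f)$ and with $G^\l(f)$. Since $I$ is open and $\overline I\subset I_0$, one can pick an interval $I_1\in\I_0$ with ${\rm supp}\,f\subset I_1$ and $\overline{I_1}\subset I$ (hence $\overline{I_1}\subset I_0$ as well), together with a real $g\in C^\infty(S^1)$ satisfying ${\rm supp}\,g\subset\overline{I_1}$, $g>0$ on $I_1$, $g'\neq0$ near $\partial I_1$, and $g\equiv1$ on ${\rm supp}\,f$. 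Choosing $s>0$ large enough that $f_1\equiv f+sg>0$ on $I_1$ and setting $f_2\equiv sg$, one has $f=f_1-f_2$, with $f_1$ and $f_2$ supported in $\overline{I_1}\subset I_0$ and satisfying the hypotheses of the Neveu-Schwarz analogue of Lemma \ref{L'HospitalLemma}; moreover $\varphi\equiv1$ on $\overline{I_1}$, so $G^\l(\varphi)$ plays for $f_1,f_2$ the role of the local supercharge discussed before Proposition \ref{prop:delta-welldefined}.

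Now invoke the Neveu-Schwarz versions of Theorems \ref{deltaResolvent2} and \ref{deltaResolvent4}, valid for functions supported in $I_0$ by the discussion preceding the statement: there is a nonzero real $\alpha$, large in absolute value, such that $(L^\l(f_i)+i\alpha)^{-1}$ and $G^\l(f_i)(L^\l(f_i)+i\alpha)^{-1}$, $i=1,2$, all lie in $D(\delta_\varphi)$; and since they are built out of smeared fields supported in $\overline{I_1}\subset I$, they also lie in $\A_\l(I)$. Hence $a$ commutes with these four bounded operators, and therefore with the closed operators $L^\l(f_i)$ and $G^\l(f_i)$, $i=1,2$. Because $C^\infty(L^\l_0)$ is a common core for all the smeared fields involved, the same bilinear-form computation as in Lemma \ref{dense(I)lemma}, using $f=f_1-f_2$, shows that $a$ commutes with $L^\l(f)$ and $G^\l(f)$, hence with $e^{iL^\l(f)}$ and $e^{iG^\l(f)}$; as $f$ was an arbitrary real smooth function supported in $I$, this yields $a\in\A_\l(I)'$, completing the proof. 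The one delicate point — and the reason for the restriction $\overline I\subset I_0$ — is that the auxiliary resolvent elements used to separate the commutant must simultaneously be localized in $I$ (so as to belong to $\A_\l(I)$) and be built from functions supported in $I_0$ (so that the Neveu-Schwarz resolvent estimates and commutation relations apply), and both can be arranged exactly because $\overline I\subset I_0$.
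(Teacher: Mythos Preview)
Your proof is correct and follows essentially the same approach as the paper, which simply says that the statements from Proposition~\ref{resolvent} through Lemma~\ref{dense(I)lemma} carry over verbatim once $Q_\l$ is replaced by $G^\l(\varphi)$, $\delta$ by $\delta_\varphi$, and all test functions are required to have support in $I_0$. You have spelled out the adaptation of Lemma~\ref{dense(I)lemma} in detail, with the correct renaming of the auxiliary interval and the correct observation that $\overline{I_1}\subset I\subset I_0$ ensures both localization in $\A_\l(I)$ and applicability of the Neveu--Schwarz resolvent estimates.
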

As $I_0$ was arbitrary, this will lead to an analogue of Theorem 
\ref{main1}, but first we need to adapt 
the essential ingredient from Section \ref{SectSuperDerivations} to the 
present situation, namely Proposition \ref{propositionA_f}.
As in Section \ref{Sect:R} for any $f\in C^\infty_c(\RR)$ and any
$a \in B(\H_\l)$ we set 
\begin{equation}
a_f \equiv \int_\RR e^{itL^\l_0}ae^{-itL^\l_0}f(t){\rm d}t. 
\end{equation}
\begin{proposition}\label{prop:local-smoothness} Let $I$ be any interval 
in $\I_0$ and let $\varphi \in \C_{I}$. Moreover let $I_0$ be any interval 
in $\I_0$ whose closure $\overline{I_0}$ is contained in $I$.  
Then there exists $\varepsilon >0$ such that for all 
$f\in C^\infty_c(\RR)$ with ${\rm supp}f \subset(-\varepsilon, \varepsilon)$ 
and all $a \in D(\delta_\varphi) \cap  \A (I_0)$ we have 
$a_f \in \gA_\l(I)$.   
\end{proposition}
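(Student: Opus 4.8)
The plan is to reproduce the proof of Proposition \ref{propositionA_f}, whose only inputs are Lemmas \ref{lemmaA_f} and \ref{lemmaA_f2}; the new point is that $e^{itL^\l_0}$ no longer commutes with the local supercharge $G^\l(\varphi)$, so both lemmas have to be reestablished locally. Write $r_t\in\Diff(S^1)$ for the rotation by $t$, so that $e^{itL^\l_0}$ implements $r_t$ (up to a phase) on the conformally covariant net $\A_\l$, and recall that $L^\l_0$ commutes with $\Gamma_\l$. First I would fix $\varepsilon>0$ small enough that $r_t(\overline{I_0})\subset I$ and $-1\notin r_t({\rm supp}\,\varphi)$ for all $|t|<\varepsilon$, which is possible because $\overline{I_0}\subset I$ and ${\rm supp}\,\varphi$ is compact and avoids $-1$. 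From now on fix $f\in C^\infty_c(\RR)$ with ${\rm supp}\,f\subset(-\varepsilon,\varepsilon)$ and $a\in D(\delta_\varphi)\cap\A_\l(I_0)$, and set $a_t\equiv e^{itL^\l_0}ae^{-itL^\l_0}$.

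Step 1 (local version of Lemma \ref{lemmaA_f}): I would show $a_f\in D(\delta_\varphi)\cap\A_\l(I)$ and $\delta_\varphi(a_f)=(\delta_\varphi(a))_f$. By conformal covariance $a_t\in\A_\l(r_tI_0)$; by the covariance of the smeared Fermi field $e^{itL^\l_0}G^\l(\varphi)e^{-itL^\l_0}=G^\l(\varphi_t)$ with $\varphi_t\equiv\varphi\circ r_{-t}$, so conjugating the defining relation of $\delta_\varphi$ by $e^{itL^\l_0}$ gives $a_t\in D(\delta_{\varphi_t})$ with $\delta_{\varphi_t}(a_t)=(\delta_\varphi(a))_t$. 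For $|t|<\varepsilon$ the interval $J_t\equiv{\rm int}(I\cap r_t(I))$ lies in $\I_0$ and contains $r_t(\overline{I_0})$, and both $\varphi$ and $\varphi_t$ belong to $\C_{J_t}$; hence Proposition \ref{prop:delta-welldefined}$(i)$, applied with the interval $J_t$, gives $a_t\in D(\delta_\varphi)\cap\A_\l(J_t)\subset D(\delta_\varphi)\cap\A_\l(I)$ and $\delta_\varphi(a_t)=(\delta_\varphi(a))_t$. Since $\|\delta_\varphi(a_t)\|=\|\delta_\varphi(a)\|$ is bounded in $t$ and $\delta_\varphi$ is $\sigma$-weakly closed (Proposition \ref{delta}$(iv)$ and the remark following it), approximating $a_f=\int a_tf(t)\,{\rm d}t$ and $(\delta_\varphi(a))_f$ by Riemann sums yields the claim. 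Moreover $\delta_\varphi(a)\in\A_\l(I_0)$ by Proposition \ref{prop:delta-welldefined}$(ii)$, so by the standard argument $a_f$ and $(\delta_\varphi(a))_f=\delta_\varphi(a_f)$ leave $C^\infty(L^\l_0)$ invariant.

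Step 2 (local version of Lemma \ref{lemmaA_f2}): I would show $a_f\in D(\delta_\varphi^2)$ with $\delta_\varphi^2(a_f)=ia_{f'}$. Since $s\mapsto e^{isL^\l_0}a_fe^{-isL^\l_0}=a_{f(\cdot-s)}$ is norm-differentiable at $0$ with derivative $-a_{f'}$, the standard argument used in Lemma \ref{lemmaA_f2} gives $L^\l_0a_f\psi-a_fL^\l_0\psi=ia_{f'}\psi$ for $\psi\in D(L^\l_0)$. Now $\varphi=1$ on $I$, so $1-\varphi^2$ vanishes on $I$ and the even operator $L^\l(1-\varphi^2)$ is affiliated with the algebra localised in $I'$; by graded locality of $\A_\l$ it commutes with $a_f\in\A_\l(I)$, and combining this with $G^\l(\varphi)^2=L^\l(\varphi^2)+{\rm const}$ (Proposition \ref{prop:CRs}) and $L^\l(\varphi^2)=L^\l_0-L^\l(1-\varphi^2)$ yields $G^\l(\varphi)^2a_f\psi-a_fG^\l(\varphi)^2\psi=ia_{f'}\psi$ on $C^\infty(L^\l_0)$. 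From here the computation in the proof of Lemma \ref{lemmaA_f2} goes through with $Q$ replaced by $G^\l(\varphi)$ — using Step 1 so that $\delta_\varphi(a_f)\psi\in C^\infty(L^\l_0)\subset D(G^\l(\varphi))$, and Proposition \ref{delta}$(i)$ — to give $G^\l(\varphi)\delta_\varphi(a_f)\psi-\gamma_\l(\delta_\varphi(a_f))G^\l(\varphi)\psi=ia_{f'}\psi$ for all $\psi\in C^\infty(L^\l_0)$; since $C^\infty(L^\l_0)$ is a core for $G^\l(\varphi)$, Lemma \ref{corelemma} gives $a_f\in D(\delta_\varphi^2)$ and $\delta_\varphi^2(a_f)=ia_{f'}$.

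Finally, $ia_{f'}$ is of the same form with $f$ replaced by $f'$, so Step 2 and induction give $a_f\in D(\delta_\varphi^{2k})$ for all $k$, whence $a_f\in C^\infty(\delta_\varphi)$; together with $a_f\in\A_\l(I)$ from Step 1 this is $a_f\in C^\infty(\delta_\varphi)\cap\A_\l(I)=\gA_\l(I)$, as required. I expect Step 1 to be the main obstacle: unlike in Lemma \ref{lemmaA_f} the rotation flow does not commute with $G^\l(\varphi)$, and the remedy — that conjugation turns $\delta_\varphi$ into $\delta_{\varphi_t}$, which still coincides with $\delta_\varphi$ on the shrunk algebra $\A_\l(r_tI_0)$ by Proposition \ref{prop:delta-welldefined} — is exactly what forces $\varepsilon$ to be small (controlled by the gap between $\overline{I_0}$ and $\partial I$ and by the distance from ${\rm supp}\,\varphi$ to $-1$), which is the quantitative content of the statement.
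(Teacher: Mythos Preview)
Your proof is correct and follows essentially the same route as the paper's own proof: first showing $a_t\in D(\delta_\varphi)$ with $\delta_\varphi(a_t)=(\delta_\varphi(a))_t$ via the covariance relation $e^{itL^\l_0}G^\l(\varphi)e^{-itL^\l_0}=G^\l(\varphi_t)$ together with Proposition \ref{prop:delta-welldefined}, then running the computation of Lemma \ref{lemmaA_f2} with the key substitution $G^\l(\varphi)^2=L^\l(\varphi^2)+{\rm const}$ and the commutation of $L^\l(1-\varphi^2)$ with $a_f$. The only cosmetic difference is that the paper applies Proposition \ref{prop:delta-welldefined} on the smaller interval $e^{it}I_0$ itself (noting $\varphi,\varphi_t\in\C_{e^{it}I_0}$) rather than on your auxiliary $J_t$, and it passes from the pointwise statement to $a_f\in D(\delta_\varphi)$ without spelling out the Riemann-sum/closedness argument you give.
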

\begin{proof}
We basically work as in the preceding sections. However, here we have to take 
care to remain ``local'' in order to preserve the right commutation 
relations. 

Since $\varphi \in \C_I$ its support is contained in some $I_1 \in \I_0$. 
Fix $\varepsilon$ such that $e^{it}I_1 \in \I_0$ and 
$e^{it}I_0 \subset I$ for all $t\in (-\varepsilon, \varepsilon)$.
Given $a \in D(\delta_\varphi) \cap \A_\l(I_0)$, by rotation covariance 
of the net $\A_\l$ we have   
\[
e^{itL^\l_0}ae^{-itL^\l_0}\in \A_\l(e^{it} I_0) \subset \A_\l(I), 
\]
for all $t\in (-\varepsilon, \varepsilon)$. Hence, if 
the support of the function $f \in C^\infty_c(\RR)$ is contained 
in $(-\varepsilon, \varepsilon)$  we also have $a_f \in \A_\l(I)$. 
Now from the definition of the smeared fields in the representation $\l$ 
it easily follows that 
$e^{itL^\l_0}G^\l(\varphi)e^{-itL^\l_0}=G^\l(\varphi_t)$ for all 
$t\in (-\varepsilon, \varepsilon)$, where the function $\varphi_t$ 
is defined by $\varphi_t(z)=\varphi(e^{-it}z)$.
Accordingly, for any $t\in (-\varepsilon, \varepsilon)$, 
$e^{itL^\l_0}ae^{-itL^\l_0} \in D(\delta_{\varphi_t}) \cap 
\A_\l(e^{it}I_0)$ and 
$$\delta_{\varphi_{t}}\big(e^{itL^\l_0}ae^{-itL^\l_0}\big)
=e^{itL^\l_0}\delta_\varphi(a)e^{-itL^\l_0}.$$
Now for all $t\in (-\varepsilon, \varepsilon)$, we have 
$\varphi_t, \varphi \in \C_{e^{it}I_0}$ and hence 
by Proposition \ref{prop:delta-welldefined} we can conclude
that $e^{itL^\l_0}ae^{-itL^\l_0}$ belongs to 
$D(\delta_\varphi)$ and that
\[
\delta_{\varphi}\big(e^{itL^\l_0}ae^{-itL^\l_0}\big)
= \delta_{\varphi_t}\big(e^{itL^\l_0}ae^{-itL^\l_0}\big)
\]
so that$$\delta_{\varphi}\big(e^{itL^\l_0}ae^{-itL^\l_0}\big)
=e^{itL^\l_0}\delta_\varphi(a)ae^{-itL^\l_0}.$$
It follows that $a_f \in D(\delta_\varphi)$ and 
$\delta_\varphi(a_f)=\delta_\varphi(a)_f$.

Next, for any $\psi \in C^{\infty}(L^\l_0 )$, we have 
$a_f\psi, \delta_\varphi(a)_f\psi\in C^{\infty}(L^\l_0 )$.
Hence $\delta_\varphi(a_f)\psi=\delta_\varphi(a)_f\psi \in 
D(G^\l(\varphi))$ and (cf. the proof of Proposition 
\ref{Prop:Hamiltonian} and Lemma \ref{lemmaA_f2})
\begin{eqnarray*}
G^\l(\varphi)\delta_\varphi(a_f)\psi & = & G^\l(\varphi)^2 a_f\psi- 
G^\l(\varphi)\gamma(a_f)G^\l(\varphi)\psi  \\
& = & G^\l(\varphi)^2 a_f\psi 
-\delta_\varphi(\gamma(a_f))G^\l(\varphi)\psi
-a_fG^\l(\varphi)^2\psi \\
& = & L^\l(\varphi_I^2)a_f\psi -a_fL(\varphi^2)\psi
-\delta_\varphi(\gamma(a_f))G^\l(\varphi) \psi \\ 
& = & L^\l_0 a_f\psi -a_fL^\l_0\psi 
-\delta_\varphi(\gamma(a_f))G^\l(\varphi) \psi \\
& = & ia_{f'}\psi -\delta_\varphi(\gamma(a_f))G^\l(\varphi)\psi \\
& = & ia_{f'}\psi + \gamma(\delta_\varphi(a_f))G^\l(\varphi)\psi. 
\end{eqnarray*} 
Thus $a_f \in D(\delta_\varphi^2)$ and 
$\delta_\varphi^2(a_f) =ia_{f'}$ by Lemma \ref{corelemma} and the 
conclusion easily follows by induction. 
\end{proof}
With the preceding modification of Proposition \ref{propositionA_f} the 
following final result is proved in the same manner as 
Theorem \ref{th:dense-R}.
\begin{theorem}
$\gA_\l(I)$ is a strongly dense unital *-subalgebra of $\A_\l(I)$ 
for all $I \in \I_0$.
\end{theorem}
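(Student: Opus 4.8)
The plan is to run the proof of Theorem \ref{th:dense-R} essentially verbatim, replacing the global supercharge $Q_\l$ by a local supercharge $G^\l(\varphi)$ with $\varphi \in \C_I$, and replacing Proposition \ref{propositionA_f} by its local counterpart Proposition \ref{prop:local-smoothness}. The only place where the Ramond argument genuinely used a global odd square root of the Hamiltonian was the smoothing step $a \mapsto a_f$ combined with $a_f \in C^\infty(\delta)$; Proposition \ref{prop:local-smoothness} provides precisely this in the Neveu--Schwarz setting while keeping $a_f$ inside $\A_\l(I)$, so everything else carries over unchanged.

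Concretely, I would fix $I \in \I_0$ and choose any $\varphi \in \C_I$, the set $\C_I$ being nonempty since $\overline{I} \subset S^1 \setminus \{-1\}$. Given an arbitrary inner interval $I_0 \in \I_0$ with $\overline{I_0} \subset I$ -- note that then also $\varphi \in \C_{I_0}$ -- the Neveu--Schwarz analogue of Lemma \ref{dense(I)lemma} established above shows that $D(\delta_\varphi) \cap \A_\l(I_0)$ is strongly dense in $\A_\l(I_0)$, i.e. $\bigl(D(\delta_\varphi)\cap\A_\l(I_0)\bigr)'' = \A_\l(I_0)$. Next, Proposition \ref{prop:local-smoothness} applied to $I$, $\varphi$ and $I_0$ gives an $\varepsilon > 0$ such that $a_f \in \gA_\l(I)$ whenever $a \in D(\delta_\varphi)\cap\A_\l(I_0)$ and $f \in C^\infty_c(\RR)$ has ${\rm supp}\,f \subset (-\varepsilon,\varepsilon)$. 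Letting $f$ run through an approximate identity of that form and using strong continuity of $t \mapsto e^{itL^\l_0}ae^{-itL^\l_0}$, one gets $a_f \to a$ strongly, so that $a \in \gA_\l(I)''$; hence $\A_\l(I_0) = \bigl(D(\delta_\varphi)\cap\A_\l(I_0)\bigr)'' \subset \gA_\l(I)''$. Finally, since $\A_\l$ extends to a conformally covariant net one has $\A_\l(I) = \bigvee_{\overline{I_0}\subset I}\A_\l(I_0)$, whence $\A_\l(I) \subset \gA_\l(I)''$; as $\gA_\l(I) = C^\infty(\delta_\varphi)\cap\A_\l(I) \subset \A_\l(I)$ is a unital $^*$-subalgebra by Proposition \ref{proposition*subalgebras} and does not depend on the choice of $\varphi \in \C_I$, this yields $\gA_\l(I)'' = \A_\l(I)$, as desired.

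The step I expect to be the real obstacle has, in fact, already been dealt with: it is Proposition \ref{prop:local-smoothness}, where one must verify that $a_f$ is not merely an element of $\A_\l(I)$ but is actually $\delta_\varphi$-smooth. This forces one to compute the graded commutators with $G^\l(\varphi)$ only over the interval on which $\varphi \equiv 1$ and to propagate the square-root identity $G^\l(\varphi)^2 = L^\l(\varphi^2) + \frac{c}{12\pi}\int_{S^1}(\varphi'^2 - \frac14\varphi^2)$ of Proposition \ref{prop:CRs} -- this is exactly where the absence of a genuine global supercharge in the Neveu--Schwarz case makes itself felt. Granted that proposition together with the local version of Lemma \ref{dense(I)lemma}, the present theorem is a direct transcription of the Ramond argument, and I would write it up as such.
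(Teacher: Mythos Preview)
Your proposal is correct and follows exactly the route the paper takes: the paper simply says the result ``is proved in the same manner as Theorem \ref{th:dense-R}'' once the local smoothing step (Proposition \ref{prop:local-smoothness}) and the Neveu--Schwarz analogue of Lemma \ref{dense(I)lemma} are in hand, and your write-up spells out precisely that transcription. The only cosmetic remark is that, when invoking the Neveu--Schwarz density lemma, the relevant hypothesis is $\varphi \in \C_I$ (with $I$ playing the role of the lemma's outer interval), which you already have; the observation $\varphi \in \C_{I_0}$ is true but not what is actually used.
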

We can summarise the main results of this section in the following theorem.

\begin{theorem}
Let $\l$ be a unitary, positive energy representation of the Neveu-Schwarz 
algebra with ${\rm Tr}(e^{-\beta L^\l_0 })<\infty$ for all $\beta > 0$. 
Then, with $\gA _\l(I)$ and $\delta_I$ as above, $I\in\I_0$, the triple 
$(\gA _\l, \H _\l, \delta_\l)$ is a net of generalised quantum algebras on $S^1\setminus\{-1\}$
with Hamiltonian $L^\l_0$.

In particular this applies if $\l$ is any irreducible unitary lowest 
weight representation of the Neveu-Schwarz algebra.
\end{theorem}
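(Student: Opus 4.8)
The plan is to observe that everything required for the statement has already been assembled in this section, so that the proof reduces to matching the constructed objects against Definition \ref{def:netQalgebra2}, plus an elementary convergence estimate for the trace-class hypothesis in the lowest weight case.

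First, by Proposition \ref{prop:delta-welldefined} and the discussion following it, the assignment $\I_0\ni I\mapsto\gA_\l(I)=C^\infty(\delta_\varphi)\cap\A_\l(I)$ is independent of the choice of $\varphi\in\C_I$, is a well-defined isotonous net of unital $^*$-subalgebras of $B(\H_\l)$, and $\delta^\l\colon I\mapsto\delta_I=\delta_\varphi|_{\gA_\l(I)}$ is a compatible net of superderivations with $\delta_{I_2}|_{\gA_\l(I_1)}=\delta_{I_1}$ for $I_1\subset I_2$. Each $\delta_I$ maps $\gA_\l(I)$ into itself, combining $\delta$-invariance of $C^\infty(\delta_\varphi)$ with Proposition \ref{prop:delta-welldefined}$(ii)$, and it is odd and antisymmetric by parts $(i)$ and $(ii)$ of Proposition \ref{delta}. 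The $\sigma$-weak closability of each $\delta_I$ follows because $\delta_\varphi$ is $\sigma$-weakly closed on $B(\H_\l)$ (Proposition \ref{delta}$(iv)$ and the ensuing Remark), while by the density theorem just proved $\gA_\l(I)$ is $\sigma$-weakly dense in $\A_\l(I)=\gA_\l(I)''$, so the $\sigma$-weak closure of $\delta_I$ is a $\sigma$-weakly closed superderivation of $\A_\l(I)$.

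Next I would dispose of the Hamiltonian axiom. The operator $L^\l_0$ is selfadjoint, even (since $\Gamma_\l$ is a function of $L^\l_0$), and positive by the positive energy assumption; Proposition \ref{Prop:Hamiltonian} provides precisely the identity $L^\l_0 a\psi-aL^\l_0\psi=\delta_I^2(a)\psi$ for all $a\in\gA_\l(I)$ and $\psi\in D(L^\l_0)$, with $L^\l_0$ independent of $I$. Together with the standing hypothesis that $e^{-\beta L^\l_0}$ is trace class for every $\beta>0$, this verifies all the conditions in Definition \ref{def:netQalgebra2}, so $(\gA_\l,\H_\l,\delta_\l)$ is a net of generalised quantum algebras on $S^1\setminus\{-1\}$ with Hamiltonian $L^\l_0$.

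It remains to treat the final clause. If $\l$ is an irreducible unitary lowest weight representation of the Neveu-Schwarz algebra with lowest weight $h_\l$, then it is unitary, graded (by $\Gamma_\l=e^{i2\pi(L^\l_0-h_\l)}$), and of positive energy since unitarity forces $h_\l\geq 0$; thus the hypotheses of the first part hold except for trace-class-ness, which I would check directly: the $L^\l_0$-graded dimension of $\H_\l$ is dominated coefficientwise by that of the Neveu-Schwarz Verma module, that is by the power series $q^{h_\l}\prod_{n\geq 1}(1-q^n)^{-1}\prod_{k\geq 0}(1+q^{k+1/2})$, which converges for $|q|<1$; with $q=e^{-\beta}$ this gives $\Tr(e^{-\beta L^\l_0})<\infty$ for all $\beta>0$, so the general statement applies. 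The only input that is not mere bookkeeping against the definitions is the density $\gA_\l(I)''=\A_\l(I)$, invoked for closability and itself resting on the local supercharge machinery developed above, so beyond correctly lining up the axioms I do not anticipate a genuine obstacle.
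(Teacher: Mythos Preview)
Your proposal is correct and matches the paper's approach: the theorem is explicitly stated there as a summary of the section, with no separate proof, so your task is precisely the bookkeeping you carry out—lining up Proposition~\ref{prop:delta-welldefined}, the ensuing discussion, Proposition~\ref{Prop:Hamiltonian}, and the density theorem against Definition~\ref{def:netQalgebra2}. Your explicit verification of the trace-class condition for irreducible lowest weight representations via the Verma module character, and of $h_\l\geq 0$ from unitarity, fills in a detail the paper leaves implicit.
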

\begin{corollary}
With $\l$ as in the above theorem, $(\gA _\l, \H _\l, \delta_\l)$ extends to a 
rotation covariant net of generalised quantum algebras on the double cover $\S2$ 
of $S^1$ with Hamiltonian $L^\l_0$. It does not extends to a net on $S^1$.
\end{corollary}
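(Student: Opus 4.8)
The plan is to promote the properties already established for the net $(\gA_\l,\H_\l,\delta_\l)$ on $S^1\setminus\{-1\}$ to genuine rotation covariance, transport the net around the double cover by rotations, and then isolate the obstruction to pushing it down to $S^1$. \emph{Step 1 (rotation covariance).} Write $R_t\in\Diff^{(\infty)}(S^1)$ for the lift of the rotation by $t\in\RR$, implemented on $\H_\l$ by $U_\l(R_t)=e^{itL^\l_0}$. As in the proof of Proposition \ref{prop:local-smoothness} one has $U_\l(R_t)G^\l(\varphi)U_\l(R_t)^{*}=G^\l(\varphi_t)$ with $\varphi_t(z)=\varphi(e^{-it}z)$, and for $I,R_tI\in\I_0$ one may choose $\varphi\in\C_I$ with $\varphi_t\in\C_{R_tI}$; combined with conformal covariance of $\A_\l$ and Proposition \ref{prop:delta-welldefined} this yields, for all $I,R_tI\in\I_0$,
\[
U_\l(R_t)\,\gA_\l(I)\,U_\l(R_t)^{*}=\gA_\l(R_tI),\qquad
\Ad\!\big(U_\l(R_t)\big)\circ\delta_I=\delta_{R_tI}\circ\Ad\!\big(U_\l(R_t)\big).
\]

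\emph{Step 2 (extension to $\S2$).} The one-parameter group $\{R_t\}_{t\in\RR}$ acts on $\S2$ with $R_{4\pi}$ the identity diffeomorphism. For $I\in\I^{(2)}$, choose $t$ with $R_tI$ a lift of an interval in $\I_0$ and set $\gA_\l(I):=\Ad(U_\l(R_t)^{*})\gA_\l(R_tI)$ and $\delta_I:=\Ad(U_\l(R_t)^{*})\circ\delta_{R_tI}\circ\Ad(U_\l(R_t))$. By Step 1 the only point to verify is invariance under $R_{4\pi}$; this holds because $e^{i4\pi L^\l_0}\in\bigcap_{I}\A_\l(I)'$, so $\Ad(e^{i4\pi L^\l_0})$ is trivial on every $\gA_\l(I)\subset\A_\l(I)$, and because $e^{i2\pi L^\l_0}G^\l_r e^{-i2\pi L^\l_0}=e^{-2\pi i r}G^\l_r=-G^\l_r$ for $r\in\ZZ+\tfrac12$, whence $e^{i4\pi L^\l_0}$ commutes with every $G^\l(\varphi)$ and so with every $\delta_\varphi$. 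Isotony, the superderivation identities, $\sigma$-weak closability and the Hamiltonian relation $L^\l_0a\psi-aL^\l_0\psi=\delta_I^2(a)\psi$ of Proposition \ref{Prop:Hamiltonian} all transport along rotations, and $L^\l_0$ is rotation invariant; this produces a rotation covariant net of generalised quantum algebras on $\S2$ with Hamiltonian $L^\l_0$ that restricts to the given one on $S^1\setminus\{-1\}$, which is exactly the mechanism announced after Definition \ref{def:netQalgebra2}.

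\emph{Step 3 (no descent to $S^1$).} The deck transformation of $\S2\to S^1$ is $\zeta=R_{2\pi}$, implemented by $U_\l(\zeta)=e^{i2\pi L^\l_0}$, and by the sign computation of Step 2 the automorphism $\Ad(U_\l(\zeta))$ acts on the net exactly as the grading $\gamma_\l$. Hence for a lift $J$ of an $\I_0$-interval one has $\gA_\l(\zeta J)=\gamma_\l(\gA_\l(J))=\gA_\l(J)$ by $\gamma_\l$-invariance (Proposition \ref{proposition*subalgebras}), while unwinding the definition of Step 2 gives $\delta_{\zeta J}=\gamma_\l\circ\delta_J\circ\gamma_\l$ on this common algebra, which by property $(i)$ of Proposition \ref{delta} equals $-\delta_J$. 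Since $\delta_J^2=[L^\l_0,\,\cdot\,]$ does not vanish on the weakly dense subalgebra $\gA_\l(J)$ (because $\Ad(e^{itL^\l_0})$ acts non-trivially on $\A_\l(J)$), we get $\delta_J\neq0$ and therefore $\delta_{\zeta J}\neq\delta_J$: the net genuinely separates the two sheets over $S^1$, so it does not descend to $S^1$, and in particular no rotation covariant net of generalised quantum algebras on $S^1$ can restrict to $(\gA_\l,\H_\l,\delta_\l)$ — on $S^1$ the rotation $R_{2\pi}$ would be the identity and covariance would force $\gamma_\l\circ\delta_I=\delta_I\circ\gamma_\l=-\gamma_\l\circ\delta_I$, i.e. $\delta_I=0$. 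I expect this last step to be the genuinely delicate one: the argument above settles the rotation covariant case, whereas the stronger assertion that \emph{no} net on $S^1$ restricts to ours requires showing that the locally defined superderivations $\delta_{I_0}$, $I_0\in\I_0$, admit no $\sigma$-weakly closed square root of $[L^\l_0,\,\cdot\,]$ localised in an interval containing $-1$ — the superderivation counterpart of the anti-periodicity of the Neveu--Schwarz fields that already confines the underlying Fermi net $\A_\l$ to $\S2$, cf.\ \cite{CKL}.
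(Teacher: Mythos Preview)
Your proof is correct and follows essentially the same route as the paper: rotation covariance is read off from the proof of Proposition \ref{prop:local-smoothness}, the extension to $\S2$ uses $e^{i4\pi L^\l_0}\in\bigcap_I\A_\l(I)'$, and the obstruction comes from the sign flip $\Ad(e^{i2\pi L^\l_0})\circ\delta_I\circ\Ad(e^{-i2\pi L^\l_0})=-\delta_I$ (your Fourier-mode computation $e^{i2\pi L^\l_0}G^\l_r e^{-i2\pi L^\l_0}=-G^\l_r$ being equivalent to the paper's use of $e^{-i2\pi L^\l_0}\Gamma_\l\in\bigcap_I\A_\l(I)'$). Your closing caveat about non-rotation-covariant extensions and your explicit check that $\delta_J\neq 0$ are more scrupulous than the paper, which simply stops at the sign-change argument without addressing either point.
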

\begin{proof}
According to the proof of Proposition \ref{prop:local-smoothness}, we have local 
rotation covariance, namely if $I$ and $e^{it}I$ belong to $\I_0$ for all 
$|t|<\varepsilon$ for some $\varepsilon >0$, 
then
\[e^{itL^\l_0}\gA_\l(I)e^{-itL^\l_0}=\gA_\l(e^{it}I)\]
and 
\[
\delta_{e^{it}I}= \Ad e^{itL^\l_0}\circ\delta_I\circ \Ad e^{-itL^\l_0}\ .
\]
Since 
$$e^{i4\pi L^\l_0} \in \bigcap_{I\in \I}\A_\l(I)',$$ 
the above equation and the group property of $t\mapsto e^{itL^\l_0}$ allow 
to extend consistently $(\gA _\l, \H _\l, \delta_\l)$ to a rotation covariant 
net of generalised quantum algebras on $\S2$.

Since 
$$e^{-i2\pi L^\l_0}\Gamma_\l \in \bigcap_{I\in \I}\A_\l(I)'$$ 
in the Neveu-Schwarz case, we have
\[
\Ad e^{i2\pi L^\l_0}\cdot\delta_I\cdot\Ad e^{-i2\pi L^\l_0} = \Ad\Gamma_\l 
\cdot\delta_I\cdot\Ad\Gamma_\l = \gamma_\l\cdot\delta_I\cdot\gamma_\l 
= -\delta_I \ ,
\]
namely the derivation $\delta_I$ associated with an interval $I\in\I^{(2)}$ 
changes sign after a $2\pi$-rotation, so it cannot give rise to  a net of 
generalised quantum algebras on $S^1$.
\end{proof}

\begin{remark}
In the Ramond case we found no obstruction to define the net 
$(\gA _\l, \H _\l, Q_\l)$ on $S^1$. This is due to the fact that 
if $\l$ is a representation of the Ramond algebras
then the unitary operator $e^{i2\pi L^\l_0}$ commutes with all the local 
algebras and hence it does not implement the grading $\gamma_\l$.
\end{remark}


\section{Outlook}\label{Outlook}
By the results in this paper, we have the basis for the analysis of the JLO 
cyclic cocycle and index theorems. One point to further describe is 
a ``universal algebra'' whose representations give rise to the spectral triples 
(in this paper we have worked on the representation space from the beginning).
Furthermore, there are different models, e.g. the supersymmetric free field.
This kind of issues and analysis will be the subject of subsequent work.

\bigskip

\noindent {{\small {\bf Acknowledgements.}
We thank Mih\'aly Weiner for useful discussions. Part of this work has been 
done while the authors were visiting the Erwin Schr\"odinger Institute in 
Vienna for the program on ``Operator Algebras and Conformal Field Theory" in 2008 and we gratefully acknowledge the hospitality there received.

{\footnotesize }

\begin{thebibliography}{99}

\bibitem{BR1} O. Bratteli \& D. W. Robinson, 
``Operator Algebras and Quantum Statistical Mechanics 1",
Springer-Verlag (1987).

\bibitem{BG} D. Buchholz \& H. Grundling,
{\it Algebraic supersymmetry: A case study},  Commun. Math. Phys. 
{\bf 272}, 699-750 (2007).

\bibitem{BS-M} D. Buchholz \& H. Schulz-Mirbach, 
{\it Haag duality in conformal quantum field theory}, Rev. Math. Phys. 
{\bf 2}, 105--125 (1990).

\bibitem{CKL} S. Carpi, Y. Kawahigashi \& R. Longo, 
{\it Structure and classification of superconformal nets},  
 Ann. Henri Poincar\'e. {\bf 9}, 1069--1121 (2008).

\bibitem{Connes91} A. Connes, {\it On the Chern character of $\theta$ 
summable Fredholm modules}, Commun. Math. Phys. {\bf 139}, 171--181 
(1991).

\bibitem{C} A. Connes, 
``Noncommutative Geometry'' Academic Press (1994).

\bibitem{CM} A. Connes \& M. Marcolli, ``Noncommutative Geometry, 
Quantum Fields and Motives" Preliminary version. www.alainconnes.org.   

\bibitem{FH} C. J. Fewster \& S. Hollands, {\it Quantum energy 
inequalities in two-dimensional conformal field theory}, Rev. Math. Phys. 
{\bf 17}, 577--612 (2005).

\bibitem{FQS} D. Friedan, Z. Qiu \& S. Shenker, 
{\it Superconformal invariance in two dimensions and the tricritical 
Ising model}, Phys. Lett. B {\bf 151}, 37--43 (1985).

\bibitem{GS} E. Getzler \& A. Szenes, {\it On the Chern character of 
a theta-summable Fredholm module}, J. Funct. Anal. {\bf 84}, 343--357
(1989).

\bibitem{GoWa} R. Goodman \& N. R. Wallach, {\it Projective unitary
positive-energy representations of ${\rm Diff}(S^1)$}, 
J. Funct. Anal. {\bf 63}, 299--321 (1985).

\bibitem{H} R. Haag, 
``Local Quantum Physics", Springer-Verlag (1996). 

 \bibitem{JLO} A. Jaffe, A. Lesniewski \& K. Osterwalder, 
{\it Quantum K-theory I. The Chern character}, Commun. Math. Phys. 
{\bf 118}, 1--14 (1988). 

 \bibitem{JLW} A. Jaffe, A. Lesniewski \& J. Weitsman,
{\it Index of a family of Dirac operators on loop space},
Commun. Math. Phys. {\bf 112}, 75--88 (1987).

\bibitem{K} D. Kastler, {\it Cyclic cocycles from graded KMS 
functionals}, Commun. Math. Phys. {\bf 121}, 345-350 (1989).

\bibitem{KL1}
Y. Kawahigashi \& R. Longo,
{\it Classification of local conformal nets. Case $c<1$},
Ann.\ of Math.\ {\bf 160}, 493--522 (2004).

\bibitem{L4} R. Longo, 
{\it Notes for a quantum index theorem}, 
Commun. Math. Phys. {\bf 222}, 45--96 (2001).

\bibitem{L5} R. Longo, 
{\it Index of subfactors and statistics of 
quantum fields. I}, Commun. Math. Phys. {\bf 126}, 217--247 (1989). 

\bibitem{Tol99} V. Toledano Laredo, {\it Integrating unitary 
representations of infinite-dimensional Lie groups},
J. Funct. Anal. {\bf 161}, 478--508 (1999).

\bibitem{X6}
F. Xu, {\it Mirror extensions of local nets},
Commun. Math. Phys. {\bf 270}, 835-847 (2007).

\end{thebibliography}
\end{document}